\def\VR{\kern-\arraycolsep\strut\vrule &\kern-\arraycolsep}
\def\vr{\kern-\arraycolsep & \kern-\arraycolsep}
\DeclareMathOperator{\Span}{span}
\DeclareMathOperator{\Rank}{rank}
\DeclareMathOperator{\card}{card}
\def\widebreve{\mathpalette\wide@breve}
\def\wide@breve#1#2{\sbox\z@{$#1#2$}%
     \mathop{\vbox{\m@th\ialign{##\crcr
\kern0.08em\brevefill#1{0.8\wd\z@}\crcr\noalign{\nointerlineskip}%
                    $\hss#1#2\hss$\crcr}}}\limits}
\def\brevefill#1#2{$\m@th\sbox\tw@{$#1($}%
  \hss\resizebox{#2}{\wd\tw@}{\rotatebox[origin=c]{90}{\upshape(}}\hss$}
\newcommand{\RR}{\mathbb R}
\newcommand{\NN}{\mathbb N}
\newcommand{\ZZ}{\mathbb Z}
\newcommand{\cV}{\mathcal V}
\newcommand{\cM}{\mathcal M}
\newcommand{\cT}{\mathcal C^{(2)}}
\newcommand{\cA}{\mathcal A}
\newcommand{\cZ}{\mathcal Z}
\newcommand{\cC}{\mathcal C}
\newcommand{\benu}{\begin{enumerate}}
\newcommand{\eenu}{\end{enumerate}}
\newcommand{\bop}{\begin{opomba}}
\newcommand{\eop}{\end{opomba}}
\newcommand{\supp}{\mathrm{supp}}
\newtheorem{theorem}{Theorem}[section]
\newtheorem{corollary}[theorem]{Corollary}
\newtheorem{lemma}[theorem]{Lemma}
\newtheorem{proposition}[theorem]{Proposition}
\theoremstyle{definition}
\newtheorem{example}[theorem]{Example}
\newcommand{\mbb}{\mathbb}
\newcommand{\mbf}{\mathbf}
\theoremstyle{remark}
\newtheorem{remark}[theorem]{Remark}
\numberwithin{equation}{section}
\begin{document}

\title{The truncated moment problem on the union of parallel lines}

\author[Alja\v z Zalar]{Alja\v z Zalar}

\address{%
Faculty of Computer and Information Science\\
University of Ljubljana\\
Ve\v cna pot 113\\
1000 Ljubljana\\
Slovenia}

\email{aljaz.zalar@fri.uni-lj.si}

\thanks{Supported by the Slovenian Research Agency grants J1-2453, P1-0288.}

\subjclass[2020]{Primary 44A60, 47A57, 47A20; Secondary 
15A04, 47N40.}

\keywords{truncated moment problem, Hamburger moment problem, representing measure, moment matrix}
\date{\today}
\maketitle

\begin{abstract}
	In this article we study the bivariate truncated moment problem (TMP) of degree $2k$ on the union of parallel lines. 
	First we present an alternative proof of Fialkow's solution \cite{Fia15} to the TMP on the union of two parallel lines (TMP--2pl) 
	using the solution of the truncated Hamburger moment problem (THMP). 	
	We add a new equivalent solvability condition, which is then used together with the THMP, to solve 
	the TMP on the union of three parallel lines (TMP--3pl), our second main result of the article. 
	Finally, we establish a sufficient condition for the existence of a solution to the TMP 
	on the union of $n$ parallel lines in the pure case, i.e.\ when the moment matrix $M_k$ is of the highest possible rank, 
	or equivalently the only column relations 
	come from the union of $n$ lines. The condition is based on the feasibility of a certain linear matrix inequality, corresponding to the extension of $M_k$ by adding rows
	and columns indexed by some monomials of degree $k+1$. The proof is by induction on $n$, where $n\geq 2$ and for the base of induction $n=2$
	we use the solution of the TMP--2pl.
\end{abstract}

\section{Introduction}
\label{S1}

For $x=(x_1,\ldots,x_d)\in \RR^d$ and $i=(i_1,\ldots,i_d)\in \ZZ^d_+$, we set $|i|=i_1+\ldots+i_d$ and $x^i=x_1^{i_1}\cdots x_d^{i_d}$.
Given a real $d$-dimensional multisequence $\beta=\beta^{ (2k)}=\{\beta_i\}_{i\in \ZZ_+^d,|i|\leq 2k}$ of degree $2k$
and a closed subset $K$ of $\RR^d$, the \textbf{truncated moment problem ($K$--TMP)} on $K$ for $\beta$
asks to characterize the existence of a positive Borel measure $\mu$ on $\RR$ with support in $K$ such that
	\begin{equation}\label{moment-measure-cond}
		\beta_i=\int_{K}x^id\mu(x)\quad \text{for}\quad i\in \ZZ^d_+, |i|\leq 2k.
	\end{equation}
If such a measure exists, we say that $\beta$ has a representing measure supported on $K$ and $\mu$ is its $K$--\textbf{representing measure}.

We denote by $M_k=M_k(\beta)=(\beta_{i+j})_{i,j\in \ZZ_+^d,|i|,|j|\leq k}$ the moment matrix associated with $\beta$, where the rows and columns are indexed by 
$X^i:=X_1^{i_1}\cdots X_d^{i_d}$, $i=(i_1,\ldots,i_d)\in \ZZ^d_+$, $|i|\leq k$, in degree-lexicographic order.
Let $\RR[x]:=\RR[x_1,\ldots,x_d]$ be the set of real polynomials in $d$ variables. We write
$\RR[x]_k:=\{p\in \RR[x]\colon \deg p\leq k\}$ for the set of polynomials in $d$ variables
of degree at most $k$. Here the degree stands for the total degree, i.e., the maximal sum of the exponents of the variables over all monomials.
For every $p:=\sum_{i\in \ZZ^d_+, |i|\leq k} a_ix^i\in \RR[x]_k$, we denote by $p(X)=\sum_{i\in \ZZ^d_+, |i|\leq k} a_iX^i$ the 
corresponding vector from the column
space $\cC(M_k)$ of the matrix $M_k$. 
We say that the matrix $M_k$ is \textbf{recursively generated (rg)} if whenever $p,q,pq\in \RR[x]_k$ and $p(X)=\bf 0$, 
also $(pq)(X)=\bf 0,$ where $\bf 0$ stands for the zero vector.
%

A \textbf{concrete solution} to the $K$--TMP is a set of necessary and sufficient conditions for the existence of a $K$--representing measure $\mu$, 
that can be tested in numerical examples. 
Among necessary conditions, $M_k$ must be positive semidefinite (psd) and rg \cite{CF04,Fia95}, and by \cite{CF96}
for every polynomial $p\in \RR[x]_k$ satisfying $p(X)=\bf 0$, 
the inclusion $\supp(\mu) \subseteq \cZ(p):=\{x\in \RR^d\colon p(x)=0\}$ must hold.
In 1991, Curto and Fialkow \cite{CF91} started their investigation of the TMP by solving the three well-known univariate cases, i.e.\  the truncated Hamburger moment problem for $K=\RR$ (THMP),
the truncated Stieltjes moment problem for $K=[a,\infty)$, $a\in \RR$, and the truncated Hausdorff moment problem for $K=[a,b]$, $a,b\in \RR$, $a<b$. 
Then, in the following few decades they completely solved the TMP on quadratic varieties 
in a series of papers \cite{CF02, CF04, CF05, Fia15} by applying their far-reaching flat extension theorem (FET) (see \cite[Theorem 7.10]{CF96}, \cite[Theorem 2.19]{CF05b}, \cite{Lau05}). 
This theorem was the main tool also in some other cases of the TMP \cite{FN10,CY16,Fia11,CF13,CFM08}.


In our previous work we presented how the TMP on some varieties can be reduced to the univariate setting. More precisely, 
the TMP on $xy=0$ can be solved by the use of the THMP \cite[Section 6]{BZ21}, the TMP on $xy=1$ by the use of the strong THMP where also negative moments (i.e.\  $\beta_{i}$ for $2k\leq i \leq -1$) are given 
\cite{Zal++} and the TMP on $y=x^3$ of degree $2k$ by the use of the THMP of degree $6k$ with a missing moment $\beta_{6k-1}$ \cite{Zal+}.
This approach also gives solutions to the TMP on some other varieties such as $y^2=x^3$, $xy^2=1$ and special cases of $y=x^4$, $y^3=x^4$ \cite{Zal+,Zal++}.
These results motivated us to investigate if the TMP on the other three canonical quadratic varieties, i.e.\  $y=x^2$, $x^2+y^2=1$ and $y^2=1$, can be solved by reducing it to 
the univariate setting. Substituting $y$ with $x^2$
the moment $\beta_{i,j}$ corresponds to the moment $\beta_{i+2j,0}$ and it is easy to check that the TMP of degree $2k$ on the variety $y=x^2$ corresponds to the THMP of degree $4k$.
The case $x^2+y^2=1$ is solved in \cite[Section 2]{CF02} by reduction to the univariate trigonometric TMP \cite[Theorem 6.12]{CF91}.
So it remains to study the case $y^2=1$ or equivalently the TMP on the union of two parallel lines (TMP--2pl), first solved in \cite{Fia15} using the FET as the main tool. 

Concerning the TMP on varieties beyond the quadratic ones, the solutions typically require testing some additional numerical conditions which depend on given moments \cite{CY14, CY15,Yoo17a,Yoo17b}.
Among other cases of the TMP let us mention the recent core variety approach which yielded important new results for the TMP \cite{Fia17,DS18,BF20}.
For the solution of the cubic TMP see \cite{Kim14}. For some other results and variants of the TMP see also \cite{Vas03,KLS11,BK12,Nie14,Ble15,CIK16,IKLS17,Kim21,CGIK+}.

The first contribution of this article is an alternative solution to the TMP--2pl (Theorem \ref{y2=1}), which is more concrete than Fialkow's original solution \cite[Theorem 1.2]{Fia15} in
the sense explained in the remainder of this paragraph. 
\cite[Theorem 1.2]{Fia15} states that $\beta$ has a representing measure on $K:=\{(x,y)\in \RR^2\colon (y-\alpha_1)(y-\alpha_2)=0\}$, $\alpha_1,\alpha_2\in \RR$, $\alpha_1\neq \alpha_2$,
if and only if $M_k$ is psd, rg, has a column relation $(Y-\alpha_1)(Y-\alpha_2)=\mbf{0}$ and satisfies the \textbf{variety condition}
$\displaystyle\Rank M_k\leq \card \cV(M_k)$, where $\cV(M_k):=\bigcap_{p\in \RR[x]_k, \; p(X)=\mbf{0}} \cZ(p)$.
These conditions can be easily verified numerically,
but are sequence specific in the sense that rg relations and the variety $\cV(M_k)$ are not the same for all $\beta$.
In our solution we replace these two conditions by 
rank conditions on certain submatrices of $M_k$, which are the same for all $\beta$.
The first advantage of our solution is that computing ranks of matrices is numerically an easier task than checking the variety condition, since the latter requires 
computing zero sets of polynomials. The second advantage is that our solution is also concrete enough to
be used when solving the TMP on the union of three parallel lines (TMP--3pl) by reduction to the solvability of the TMP--2pl and the THMP.

In this paragraph we describe the basic idea for our solution to the TMP--2pl.
The approach is based 
on the reduction to the univariate setting.
The crucial technical step is the application of the affine linear transformation (ALT) such that the lines become $y=0$ and $y=1$.
Then $\beta$ has a representing measure on $y(y-1)=0$ if and only if it can be decomposed as 
	$\beta=\widetilde\beta+\widehat\beta,$
where 
	$\widetilde\beta=\{\widetilde\beta_i\}_{i\in \ZZ_+^2,|i|\leq 2k}$ has a 
representing measure on $y=0$ 
and 
	$\widehat\beta=\{\widehat\beta_i\}_{i\in \ZZ_+^2,|i|\leq 2k}$ has a 
representing measure on $y=1$. It turns out that all the moments of $\widetilde\beta,\widehat\beta$
are uniquely determined except $\widetilde\beta_{2k,0},\widehat\beta_{2k,0}$ which satisfy the relation $\widetilde\beta_{2k,0}+\widehat\beta_{2k,0}=\beta_{2k,0}$.
Using the solution to the THMP we characterize exactly in terms of the ranks of certain submatrices of $M_k$ 
when the decomposition $\beta=\widetilde\beta+\widehat\beta$ 
exists. 
Moreover, our approach is constructive and produces a $(\Rank M_k)$--atomic representing measure,
which is also the minimal possible in terms of the number of atoms of any representing measure.

The second contribution of the article is the solution to the TMP--3pl (Theorem \ref{y3-2402}).
In this case we again apply the ALT such that one of the lines becomes $y=0$. 
Then $\beta$ has a representing measure on the union of three parallel lines if and only if it can be decomposed as
in $\beta=\widetilde\beta+\widehat\beta$, where
$\widetilde\beta$ has a representing measure on $y=0$ and $\widehat\beta$ has a representing measure on the union of two other horizontal lines. 
Based on our solution to the TMP--2pl 
and the THMP, we exactly characterize when such
a decomposition exists.
It turns out that all the moments of $\widetilde\beta,\widehat\beta$
are uniquely determined except $\widetilde\beta_{2k-j,0},\widehat\beta_{2k-j,0}$, $j=0,1,$ which satisfy the relations $\widetilde\beta_{2k-j,0}+\widehat\beta_{2k-j,0}=\beta_{2k-j,0}$, $j=0,1$.
The special case of the sextic (i.e.\  $2k=6$) TMP--3pl was studied by Yoo in \cite{Yoo17a}. The author's main focus was on the pure case, i.e.\ when $M_3$
has only one column relation define by the three lines. 
He characterized the existence of a representing measure in terms of the solvability of two quadratic equations in the unknowns $\widetilde\beta_{5,0}$, $\widetilde\beta_{6,0}$.
Our solution to the TMP--3pl in the pure case is general for any $2k\geq 6$ and requires (besides the obvious column relations and $M_k$ being psd) 
only checking if a certain matrix precisely determined by $\beta$ is psd. In \cite{Yoo17a}, the author also studies the non-pure sextic case but only under the
symmetry assumption on the variety $\cV(M_3)$, while our solution of the non-pure case is general for any $2k\geq 6$ and does not require any additional
assumption on the variety or the sequence $\beta$. 

Finally, for $n\in \NN$ we study the TMP on the union of $n$ parallel lines (TMP--$n$pl) in the pure case, i.e.\ when the moment matrix $M_k$ has the highest possible rank or equivalently the column
relation defined by the $n$ lines and the ones obtained from it by recursive generation are the only nontrivial column relations of $M_k$.
We apply the ALT such that one of the lines becomes $y=0$.
Then $\beta$ has a representing measure on the union of $n$ parallel lines if and only if it can be decomposed as in $\beta=\widetilde\beta+\widehat\beta$, where
$\widetilde\beta$ has a representing measure on $y=0$ and $\widehat\beta$ has a representing measure on the union of other $n-1$ horizontal lines.
Using our solution to the TMP--3pl and the THMP we noticed a sufficient condition for the solvability of the pure TMP--4pl,
which is based on the feasibility of a certain linear matrix inequality corresponding to the extension of $M_k$
with the addition of the rows and columns indexed by some monomials of degree $k+1$.
It turns out that this condition extends to any $n\geq 4$, where the proof goes by induction on $n$ and we can in fact use the $n=2$ case as a base case (Theorem \ref{cor-0104-1556}).


\subsection{ Readers Guide}
The paper is organized as follows. 
In Section \ref{S2} we fix notation and introduce some tools needed in the proofs of our main results.
In Section \ref{S3} we present a variant of the solution of the TMP--2pl with the proof based on the use of the THMP (see Theorem \ref{y2=1}),
apply it to solve the pure and almost pure cases (see Corollary \ref{cor-0304-1917})
and provide a numerical example (see Example \ref{ex-1103-824}).
In Section \ref{S4} we solve the TMP--3pl (see Theorem \ref{y3-2402}), apply it to solve the pure case (see Corollary \ref{cor-0304-2008}) and give numerical examples demonstrating
the statement of the solution (see Examples \ref{ex1-0504}--\ref{ex4-0504}). Finally, in Section \ref{S5} a sufficient condition for
the solvability of the pure TMP--$n$pl is established (see Theorem \ref{cor-0104-1556}). \\


\noindent \textbf{Acknowledgement}.  I would like to thank Jaka Cimpri\v c and Igor Klep for valuable comments on the preliminary version of this paper.
I also thank three anonymous reviewers for their detailed reading of the paper and many helpful suggestions for improvement. 

\section{Preliminaries}\label{S2}

In this section we fix some terminology, notation and present some tools needed in the proofs of our main results.\\

Let $k\in \NN$ and $\beta=\beta^{ (2k)}=\{\beta_{i,j}\}_{i,j\in \ZZ_+,\; 0\leq i+j\leq 2k}$ be a bivariate sequence of degree $2k$.
In the degree-lexicographic order $1,X,Y,X^2,XY,Y^2,\ldots,X^k,X^{k-1}Y,\ldots,Y^k$ of rows and columns, the corresponding moment matrix to $\beta$
is equal to 
	\begin{equation}\label{281021-1448}
		M_k(\beta):=
		\left(\begin{array}{cccc}
		M[0,0](\beta) & M[0,1](\beta) & \cdots & M[0,k](\beta)\\
		M[1,0](\beta) & M[1,1](\beta) & \cdots & M[1,k](\beta)\\
		\vdots & \vdots & \ddots & \vdots\\
		M[k,0](\beta) & M[k,1](\beta) & \cdots & M[k,k](\beta)
		\end{array}\right),
	\end{equation}
where
	$$M[i,j](\beta):=
		\left(\begin{array}{ccccc}
		\beta_{i+j,0} & \beta_{i+j-1,1} & \beta_{i+j-2,2} & \cdots & \beta_{i,j}\\
		\beta_{i+j-1,1} & \beta_{i+j-2,2} & \beta_{i+j-3,3} & \cdots & \beta_{i-1,j+1}\\
		\beta_{i+j-2,2} & \beta_{i+j-3,3} & \beta_{i+j-4,4} & \cdots & \beta_{i-2,j+2}\\
		\vdots & \vdots & \vdots & \ddots &\vdots\\
		\beta_{j,i} & \beta_{j-1,i+1} & \beta_{j-2,i+2} & \cdots & \beta_{0,i+j}\\
		\end{array}\right).$$ 
	Note that each matrix $M[i,j](\beta)$ is a $(i+1)\times (j+1)$ Hankel matrix, i.e., it is constant on each cross-diagonal. 
Let $Q_1, Q_2$ be subsets of the set $\{X^iY^j\colon  i,j \in \ZZ_+,\; 0\leq i+j\leq k\}$.
We denote by 
$(M_k)|_{Q_1,Q_2}$ 
the submatrix of $M_k$ consisting of the rows indexed by the elements of $Q_1$
and the columns indexed by the elements of $Q_2$. In case $Q:=Q_1=Q_2$, we write 
$(M_k)|_{Q}=(M_k)|_{Q,Q}$
for short.
We write $\RR^{n\times m}$ for the set of $n\times m$ real matrices. 
For a matrix $M$ we denote by $\cC(M)$ its column space.
The set of real symmetric matrices of size $n$ will be denoted by $S_n$. 
For a matrix $A\in S_n$ the notation $A\succ 0$ (resp.\ $A\succeq 0$) means $A$ is positive definite (pd) (resp.\ positive semidefinite (psd)).
We write $I_n$ for the $n\times n$ identity matrix. We will also use $I$ to denote the identity matrix of appropriate size.

For $x\in \RR^d$, we use $\delta_x$ to denote the probability measure on $\RR^d$ such that $\delta_x(\{x\})=1$. 
By a \textbf{finitely atomic positive measure} on $\RR^d$ we mean a measure of the form $\mu=\sum_{j=0}^\ell \rho_j \delta_{x_j}$, 
where $\ell\in \NN$, each $\rho_j>0$ and each $x_j\in \RR^d$. The points $x_j$ are called 
\textbf{atoms} of the measure $\mu$ and the constants $\rho_j$ the corresponding \textbf{densities}.

\subsection{Affine linear transformations} \label{affine linear-trans}

Let $K\subseteq \RR^2$ and $\beta$ as above. The existence of a $K$--representing measure for $\beta$ is invariant under invertible affine linear transformations of the form 
	$$\phi(x,y)=(\phi(x,y),\phi(x,y)):=(a+bx+cy,d+ex+fy),\; (x,y)\in \RR^{2},$$
$a,b,c,d,e,f\in \RR$ with $bf-ce \neq 0$.
Indeed, if $L_{\beta}:\mbb{R}[x,y]_{\leq 2k}\to \RR$ is the \textbf{Riesz functional} of the sequence $\beta$ defined by 
$$
	L_{\beta}(p):=\sum_{\substack{i,j\in \ZZ_+,\\ 0\leq i+j\leq 2k}} a_{i,j}\beta_{i,j},\qquad \text{where}\quad p=
	\sum_{\substack{i,j\in \ZZ_+,\\ 0\leq i+j\leq 2k}} a_{i,j}x^iy^j,
$$
and we denote by $\widetilde \beta$ the $2$--dimensional sequence defined by
	$$\widetilde \beta_{i,j}=L_{\beta}\big(\phi(x,y)^i \cdot \phi(x,y)^j\big),$$
then:

\begin{proposition}[{\cite[Proposition 1.9]{CF05}}] \label{251021-2254}
	Assume the notation above.
	\begin{enumerate}
		\item $M_k(\beta)$ is psd if and only if $M_k(\widetilde \beta)$ is psd.
		\item $\Rank M_k(\beta)=\Rank M_k(\widetilde \beta)$.
		\item $M_k(\beta)$ is rg if and only if $M_k(\widetilde \beta)$ is rg.
		\item\label{291021-2333} $\beta$ admits a $r$--atomic $K$--representing measure iff $\widetilde \beta$ admits a $r$--atomic $\phi(K)$--representing measure.
	\end{enumerate}
\end{proposition}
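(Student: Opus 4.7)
The plan is to realize the transition $\beta \mapsto \widetilde\beta$ as an invertible congruence of the moment matrix, from which all four items then fall out as corollaries. First, I would introduce the pullback operator $V_\phi\colon \RR[x,y] \to \RR[x,y]$ defined by $V_\phi(p) := p\circ \phi$. Since $\phi$ is invertible affine, its linear part is invertible, so $V_\phi$ is a ring automorphism of $\RR[x,y]$ that preserves total degree; in particular, it restricts to a linear automorphism of each finite-dimensional subspace $\RR[x,y]_k$, whose matrix in the degree-lexicographic monomial basis I denote by $V$. Unwinding the definition of $\widetilde\beta_{i,j}$ and using the ring-homomorphism property of $V_\phi$ yields the key identity
$$L_{\widetilde\beta}(pq) \;=\; L_\beta\bigl(V_\phi(p)\cdot V_\phi(q)\bigr) \qquad \text{for all } p,q\in \RR[x,y]_k,$$
which in matrix form with respect to the monomial basis reads
$$M_k(\widetilde\beta) \;=\; V^{T} M_k(\beta)\, V.$$
Items (1) and (2) are then immediate, since congruence by an invertible matrix preserves positive semidefiniteness and rank.

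For item (3) the same factorization does most of the work. For any coefficient vector $v$, positive semidefiniteness of $M_k(\beta)$ (which we may assume by (1)) gives $M_k(\widetilde\beta) v = \mbf{0} \Longleftrightarrow M_k(\beta)(Vv) = \mbf{0}$, so the polynomial column relations of $M_k(\widetilde\beta)$ are in bijection with those of $M_k(\beta)$ via $p\leftrightarrow V_\phi(p)$. Now suppose $p,q,pq\in \RR[x,y]_k$ and $p(X)=\mbf{0}$ in $M_k(\widetilde\beta)$; then $V_\phi(p),V_\phi(q)$ and their product $V_\phi(p)V_\phi(q)=V_\phi(pq)$ all lie in $\RR[x,y]_k$, and $(V_\phi p)(X)=\mbf{0}$ in $M_k(\beta)$. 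Applying rg of $M_k(\beta)$ gives $V_\phi(pq)(X)=\mbf{0}$, and hence $(pq)(X)=\mbf{0}$ in $M_k(\widetilde\beta)$. The reverse implication is obtained by interchanging the roles of $\phi$ and $\phi^{-1}$.

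Finally, for item (4), given an $r$-atomic $K$-representing measure $\mu=\sum_{\ell=1}^{r}\rho_\ell\delta_{x_\ell}$ for $\beta$, I would take the pushforward $\widetilde\mu := \sum_{\ell=1}^{r}\rho_\ell\delta_{\phi(x_\ell)}$, which is $r$-atomic and supported on $\phi(K)$. For every $i,j\in\ZZ_+$ with $i+j\leq 2k$ one computes
$$\int x^iy^j\,d\widetilde\mu \;=\; \sum_{\ell=1}^{r}\rho_\ell\bigl(V_\phi(x^iy^j)\bigr)(x_\ell) \;=\; L_\beta\bigl(V_\phi(x^iy^j)\bigr) \;=\; \widetilde\beta_{i,j},$$
so $\widetilde\mu$ is an $r$-atomic $\phi(K)$-representing measure for $\widetilde\beta$; the converse follows by running the same construction with $\phi^{-1}$ in place of $\phi$. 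The only step requiring any real care is (3), where one must argue that column relations themselves, not merely the ranks, correspond under $V$ in a way compatible with taking products of polynomials of bounded degree; everything else is routine linear algebra built on top of the single factorization $M_k(\widetilde\beta)=V^{T}M_k(\beta)V$.
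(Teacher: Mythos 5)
Your proposal is correct; the paper itself gives no proof of this proposition (it is quoted from \cite[Proposition 1.9]{CF05}), and your argument via the congruence $M_k(\widetilde\beta)=V^{T}M_k(\beta)V$ induced by the degree-preserving pullback automorphism $V_\phi$ is exactly the standard argument behind the cited result, with the pushforward of atomic measures handling item (4). One small blemish: in item (3) the kernel correspondence $M_k(\widetilde\beta)v=\mbf{0}\Leftrightarrow M_k(\beta)(Vv)=\mbf{0}$ follows from invertibility of $V$ alone, so the parenthetical appeal to positive semidefiniteness is both unnecessary and not something you are entitled to assume there; drop it and the proof is clean.
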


In the rest of the paper we write $\phi(\beta)$ and $\phi(M_k(\beta))$ to denote $\widetilde \beta$ and $M_k(\widetilde \beta)$, respectively.

%

\subsection{Generalized Schur complements}\label{SubS2.1}
Let $n,m\in \NN$ and 
	\begin{equation*}
		\cM=\left( \begin{array}{cc} A & B \\ C & D \end{array}\right)\in S_{n+m},
	\end{equation*}
where $A\in \RR^{n\times n}$, $B\in \RR^{n\times m}$, $C\in \RR^{m\times n}$  and $D\in \RR^{m\times m}$.
The \textbf{generalized Schur complement} of $A$ (resp.\ $D$) in $\cM$ is defined by
	$$\cM/A=D-CA^\dagger B\quad(\text{resp.}\; \cM/D=A-BD^\dagger C),$$
where $A^\dagger $ (resp.\ $D^\dagger $) stands for the Moore-Penrose inverse of $A$ (resp.\ $D$)  \cite{Zha05}. 

Let us recall now a characterization of psd $2\times 2$ block matrices in terms of Schur
complements.

\begin{theorem}[{\cite{Alb69}}] \label{block-psd} 
	Let $n,m\in \NN$ and
		\begin{equation*}
			\cM=\left( \begin{array}{cc} A & B \\ B^{T} & C\end{array}\right)\in S_{n+m},
		\end{equation*} 
	where $A\in S_n$, $B\in \RR^{n\times m}$ and $C\in S_m$.
	Then: 
	\begin{enumerate}
		\item The following conditions are equivalent:
			\begin{enumerate}
				\item\label{pt1-281021-2128} $\cM\succeq 0$.
				\item\label{pt2-281021-2128} $C\succeq 0$, $\cC(B^T)\subseteq\cC(C)$ and $\cM/C\succeq 0$.
				\item\label{pt3-281021-2128} $A\succeq 0$, $\cC(B)\subseteq\cC(A)$ and $\cM/A\succeq 0$.
			\end{enumerate}
		\item\label{021121-1052} If $\cM\succeq 0$, then $\Rank \cM=\Rank A$ if and only if $\cM/A=0$.
	\end{enumerate}
\end{theorem}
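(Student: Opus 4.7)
The plan is to prove Albert's theorem via a block $LDU$-type factorization using the Moore--Penrose pseudoinverse, which works even when $A$ is singular. The key algebraic identity is
$$\cM = \begin{pmatrix} I & 0 \\ B^T A^\dagger & I \end{pmatrix} \begin{pmatrix} A & 0 \\ 0 & \cM/A \end{pmatrix} \begin{pmatrix} I & A^\dagger B \\ 0 & I \end{pmatrix},$$
which holds precisely when $A A^\dagger B = B$. Since $A$ is symmetric, $A A^\dagger$ is the orthogonal projector onto $\cC(A)$, so this identity is available exactly under the column-inclusion condition $\cC(B)\subseteq \cC(A)$.

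For the implication (c) $\Rightarrow$ (a), the factorization exhibits $\cM$ as a congruence (via invertible triangular factors) of the block-diagonal matrix $\mathrm{diag}(A,\cM/A)$, so if both blocks are psd then so is $\cM$. For (a) $\Rightarrow$ (c), note $A\succeq 0$ automatically, as $A$ is a principal submatrix of $\cM$. To extract $\cC(B)\subseteq \cC(A)$, I would test $\cM\succeq 0$ against vectors $(tx,y)^T$ with $x\in \ker A$: the resulting quadratic $2t\,x^T B y + y^T C y \geq 0$ must hold for all $t\in \RR$ and all $y$, forcing $B^T x = 0$; since $A$ is symmetric, $\ker A = \cC(A)^\perp$, so this gives $\cC(B)\subseteq \cC(A)$. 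With the column condition in hand, the factorization applies, and reading off psd of the middle block from $\cM\succeq 0$ yields $\cM/A\succeq 0$. The equivalence (a) $\Leftrightarrow$ (b) is the symmetric argument with the roles of $A$ and $C$ exchanged.

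For part (2), the same factorization yields the rank identity $\Rank \cM = \Rank A + \Rank(\cM/A)$, because the outer triangular factors are invertible. Consequently, $\Rank \cM = \Rank A$ is equivalent to $\Rank(\cM/A)=0$, i.e.\ to $\cM/A = 0$. The main technical subtlety will be verifying the pseudoinverse identities $AA^\dagger B = B$ and $B^T A^\dagger A = B^T$ from the column-inclusion hypothesis, together with the symmetry of $AA^\dagger$ when $A$ is symmetric; once these are established, the block factorization is a routine computation and the rest of the argument falls out of it.
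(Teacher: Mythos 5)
The paper does not prove this statement at all --- it is quoted from Albert \cite{Alb69} and used as a black box --- so there is no in-paper proof to compare against. Your argument is a correct, self-contained proof and is the standard one. The block factorization
\[
\cM=\left(\begin{array}{cc} I & 0 \\ B^TA^\dagger & I\end{array}\right)
\left(\begin{array}{cc} A & 0 \\ 0 & \cM/A\end{array}\right)
\left(\begin{array}{cc} I & A^\dagger B \\ 0 & I\end{array}\right)
\]
is exactly the identity the paper itself verifies later in Proposition \ref{prop-2604-1140}; the difference is that the paper obtains the prerequisite $AA^\dagger B=B$ by invoking Theorem \ref{block-psd}, whereas you must (and do) derive $\cC(B)\subseteq\cC(A)$ directly from $\cM\succeq 0$ via the test vectors $(tx,y)^T$ with $x\in\ker A$, which is precisely what keeps your proof non-circular. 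All the individual steps check out: $AA^\dagger$ is the orthogonal projector onto $\cC(A)$ for symmetric $A$, so the column inclusion is equivalent to $AA^\dagger B=B$ (and its transpose $B^TA^\dagger A=B^T$); the outer factors are invertible, giving both the psd equivalence by congruence and the rank additivity $\Rank\cM=\Rank A+\Rank(\cM/A)$, from which part (2) follows since $\cM/A\succeq 0$ and $\Rank(\cM/A)=0$ forces $\cM/A=0$. The $(a)\Leftrightarrow(b)$ equivalence by symmetry is fine. No gaps.
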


%
%
The following proposition expresses the rank of a psd $2\times 2$ block matrix in terms of the ranks of a diagonal block and its
Schur complement.

\begin{proposition}\label{prop-2604-1140}
	Assume the notation of Theorem \ref{block-psd}. Let $\cM\succeq 0$. Then:
	 \begin{align}\label{prop-2604-1140-eq}
		\left(\begin{array}{cc} I_n & 0 \\ -B^TA^\dagger & I_m\end{array}\right)
		\cM
		\left(\begin{array}{cc} I_n & -A^\dagger B \\ 0 & I_m\end{array}\right)&=
		\left(\begin{array}{cc} A & 0 \\ 0 & \cM/A\end{array}\right),\\
		\label{prop-2604-1140}
		\left(\begin{array}{cc} I_n & -BC^\dagger \\ 0 & I_m\end{array}\right)
		\cM
		\left(\begin{array}{cc} I_n & 0 \\ -C^\dagger B^T & I_m\end{array}\right)&=
		\left(\begin{array}{cc} \cM/C & 0 \\ 0 & C\end{array}\right),
	\end{align}
	and hence
	\begin{equation}\label{prop-2604-1140-eq2}
		\Rank \cM= \Rank A+\Rank \cM/A=\Rank C+\Rank \cM/C.
	\end{equation}	
\end{proposition}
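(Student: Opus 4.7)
The plan is to establish the two block-triangular conjugation identities directly by multiplication, and then read off the rank formula from the fact that the conjugating matrices are unit triangular and the resulting matrix is block-diagonal.

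First I will invoke Theorem \ref{block-psd}: since $\cM\succeq 0$, we have $A\succeq 0$, $C\succeq 0$, $\cC(B)\subseteq\cC(A)$ and $\cC(B^T)\subseteq\cC(C)$. The range inclusion $\cC(B)\subseteq\cC(A)$ together with the defining Moore--Penrose identity $AA^\dagger A=A$ yields $AA^\dagger B=B$, since each column of $B$ lies in $\cC(A)$ and hence is fixed by the orthogonal projector $AA^\dagger$. Transposing and using symmetry of $A$ (so $(A^\dagger)^T=A^\dagger$) gives $B^TA^\dagger A=B^T$. Symmetrically, from $\cC(B^T)\subseteq\cC(C)$ I obtain $CC^\dagger B^T=B^T$ and $BC^\dagger C=B$.

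With these identities in hand, the verification of \eqref{prop-2604-1140-eq} is a direct block calculation: first multiplying $\cM$ on the right by the indicated unit upper-triangular matrix produces the $(1,2)$-block $-AA^\dagger B+B=0$ and replaces the $(2,2)$-block by $\cM/A=C-B^TA^\dagger B$; subsequently multiplying on the left by the corresponding unit lower-triangular matrix produces the $(2,1)$-block $-B^TA^\dagger A+B^T=0$ while leaving the diagonal blocks unchanged. The identity \eqref{prop-2604-1140} is proved analogously, swapping the roles of $A$ and $C$ and using the identities $CC^\dagger B^T=B^T$ and $BC^\dagger C=B$.

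For the rank formula \eqref{prop-2604-1140-eq2}, I simply observe that the unit triangular matrices appearing on either side of $\cM$ in both identities are invertible (their determinants equal $1$), so conjugation by them preserves the rank of $\cM$. Since the rank of a block-diagonal matrix is the sum of the ranks of its diagonal blocks, \eqref{prop-2604-1140-eq} gives $\Rank\cM=\Rank A+\Rank\cM/A$ and \eqref{prop-2604-1140} gives $\Rank\cM=\Rank C+\Rank\cM/C$. No step is a real obstacle; the only subtlety is the careful use of the range-inclusion conditions from Theorem \ref{block-psd} to justify the Moore--Penrose absorption identities, which is what makes the argument work for a generalized (rather than honest) inverse.
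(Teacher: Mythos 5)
Your proof is correct and follows essentially the same route as the paper: the range inclusion $\cC(B)\subseteq\cC(A)$ from Theorem \ref{block-psd} yields the absorption identities $AA^\dagger B=B$ and $B^TA^\dagger A=B^T$, the first congruence is verified by direct block multiplication, and the rank formula follows from the invertibility of the unit triangular factors. The only (immaterial) difference is that the paper obtains the second identity from the first by conjugating with the block permutation matrix $P=\left(\begin{smallmatrix}0 & I_m\\ I_n & 0\end{smallmatrix}\right)$, whereas you redo the symmetric computation with $CC^\dagger B^T=B^T$; both are equally valid.
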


\begin{proof}
	First note that the condition $\cM\succeq 0$ implies by Theorem \ref{block-psd} that 
	$\cC(B)\subseteq \cC(A)$ and hence 
	\begin{equation}\label{201021-0820}
		AA^\dagger B=B\qquad \text{and} \qquad B^TA^\dagger A=B^T.
	\end{equation}
	We will use \eqref{201021-0820} to verify the equality \eqref{prop-2604-1140-eq}:
\begin{align*}
		&\left(\begin{array}{cc} I_n & 0 \\ -B^TA^\dagger & I_m\end{array}\right)
		\left(\begin{array}{cc} A & B \\ B^T & C\end{array}\right)
		\left(\begin{array}{cc} I_n & -A^\dagger B \\ 0 & I_m\end{array}\right)\\
		=&
		\left(\begin{array}{cc} A & B \\ 
			-B^TA^\dagger A+B^T & -B^TA^\dagger B+C \end{array}\right)
			\left(\begin{array}{cc} I_n & -A^\dagger B \\ 0 & I_m\end{array}\right)\\
		\underbrace{=}_{\eqref{201021-0820}}& 
		\left(\begin{array}{cc} A & B \\ 
			0 & -B^TA^\dagger B+C \end{array}\right)
		\left(\begin{array}{cc} I_n & -A^\dagger B \\ 0 & I_m\end{array}\right)\\
		=& 
		\left(\begin{array}{cc} A & -AA^\dagger B+B \\ 
			0 & -B^TA^\dagger B+C \end{array}\right)\\
		\underbrace{=}_{\eqref{201021-0820}}& 
		\left(\begin{array}{cc} A & 0 \\ 
			0 & \cM/A \end{array}\right).
\end{align*}
	
	Finally, since $\left(\begin{array}{cc} I_n & -A^\dagger B \\ 0 & I_m\end{array}\right)$ and 
	$\left(\begin{array}{cc} I_n & 0 \\ -B^TA^\dagger & I_m\end{array}\right)$ are invertible, 
	the equality \eqref{prop-2604-1140-eq} implies that 
	$$\Rank \cM=\Rank \left(\begin{array}{cc} A & 0 \\ 0 & \cM/A\end{array}\right)$$
	and consequently the first equality in \eqref{prop-2604-1140-eq2} holds.

	The equality \eqref{prop-2604-1140}
	can be easily derived from  \eqref{prop-2604-1140-eq}. Let $P=\left(\begin{array}{cc} 0 & I_m \\ I_n & 0\end{array}\right)$ be the block permutation matrix.
	Replacing $\cM$ by $P\cM P^T$, using \eqref{prop-2604-1140-eq}  for $P\cM P^T$ as $\cM$ and multiplying the obtained equality by $P^T$ from the left and by $P$ from the right side, we obtain \eqref{prop-2604-1140}.
\end{proof}

An interesting application of the previous result is the following extension principle for psd matrices.

\begin{lemma}\label{extension-principle}
	Let $\cA\in S_n$ be positive semidefinite, 
	$Q$ a subset of the set $\{1,\ldots,n\}$
	and	
	$\cA|_Q$ the restriction of $\cA$ to the rows and columns from the set $Q$. 
	If $\cA|_Qv=0$ for a nonzero vector $v$,
	then $\cA\widehat v=0$ where $\widehat{v}$ is a vector with the only nonzero entries in the rows from $Q$ and such that the restriction 
	$\widehat{v}|_Q$ to the rows from $Q$ equals to $v$. 
\end{lemma}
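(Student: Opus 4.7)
The plan is to reduce to a block decomposition and use the containment of column spaces that comes with positive semidefiniteness. Since permuting rows and columns of $\cA$ by the same permutation preserves both positive semidefiniteness and the principal submatrix structure, I first relabel indices so that $Q = \{1,\ldots,|Q|\}$. Writing $\cA$ in the block form
\[
  \cA = \begin{pmatrix} A & B \\ B^T & C \end{pmatrix},
\]
we have $A = \cA|_Q \in S_{|Q|}$, and the padded vector $\widehat v$ is simply $\widehat v = \begin{pmatrix} v \\ 0 \end{pmatrix}$.

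Next I invoke Theorem \ref{block-psd}: since $\cA \succeq 0$, the inclusion $\cC(B) \subseteq \cC(A)$ holds. Because $A$ is symmetric, $\Ker A = \cC(A)^\perp$, so the assumption $Av = 0$ says that $v$ is orthogonal to every column of $A$, and therefore also to every column of $B$. This yields $B^T v = 0$.

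Finally, I assemble the pieces:
\[
  \cA \widehat v = \begin{pmatrix} A & B \\ B^T & C \end{pmatrix}\begin{pmatrix} v \\ 0 \end{pmatrix} = \begin{pmatrix} Av \\ B^T v \end{pmatrix} = \begin{pmatrix} 0 \\ 0 \end{pmatrix},
\]
which is the desired conclusion. Undoing the initial permutation recovers the statement in its original indexing.

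There is no real obstacle here; the only point that might look subtle is identifying the right consequence of $\cA \succeq 0$. The crucial input is the column-space containment $\cC(B) \subseteq \cC(A)$ from Theorem \ref{block-psd}, after which the kernel-orthogonality argument for the symmetric block $A$ does all the work. Alternatively, one could write $B = A X$ for some matrix $X$ (equivalent to $\cC(B) \subseteq \cC(A)$) and compute $B^T v = X^T A v = 0$ directly.
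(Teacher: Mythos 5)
Your proof is correct. It differs from the paper's argument in mechanics, though both ultimately rest on the same input from Theorem \ref{block-psd}. The paper reduces to the same block form but then invokes the congruence factorization of Proposition \ref{prop-2604-1140}, writing
\[
\cA=\left(\begin{array}{cc} I & 0 \\ B^T{(\cA|_Q)}^\dagger & I\end{array}\right)
\left(\begin{array}{cc} \cA|_Q & 0 \\ 0 & \cA/\cA|_Q\end{array}\right)
\left(\begin{array}{cc} I & {(\cA|_Q)}^\dagger B \\ 0 & I\end{array}\right),
\]
and then applies $\widehat v$ to both sides: the right factor fixes $\widehat v$, the block-diagonal middle factor kills it because $\cA|_Q v=0$, and the conclusion follows. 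You instead extract only the column-space inclusion $\cC(B)\subseteq\cC(A)$ from Theorem \ref{block-psd}, combine it with $\Ker A=\cC(A)^\perp$ for symmetric $A$ to get $B^Tv=0$, and compute $\cA\widehat v$ directly. Your route is more elementary in that it avoids the Moore--Penrose inverse and the Schur complement entirely; the paper's route is essentially a corollary of machinery it has already set up (and note that the validity of that factorization itself encodes $AA^\dagger B=B$, i.e.\ the same column-space inclusion you use). Your closing remark that one may write $B=AX$ and compute $B^Tv=X^TAv=0$ is also a correct, self-contained variant.
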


\begin{proof}
	We may assume that $\cA|_Q$ is the upper left-hand corner of $\cA$, i.e.\ 
	$\cA=\left(\begin{array}{cc} \cA|_Q & B \\ B^T & C\end{array}\right)$. 
	Taking $\cM=\cA$ in Proposition \ref{prop-2604-1140}, using 
	\eqref{prop-2604-1140-eq} and the equality
		$$\left(\begin{array}{cc} I & 0 \\ -B^T{(\cA|_Q)}^\dagger & I\end{array}\right)^{-1}
		=\left(\begin{array}{cc} I & 0 \\ B^T{(\cA|_Q)}^\dagger & I\end{array}\right),
		$$
	it follows that
	\begin{equation}\label{decomp-26-04-1046}
		\left(\begin{array}{cc} \cA|_Q & B \\ B^T & C\end{array}\right)=
			\left(\begin{array}{cc} I & 0 \\ B^T{(\cA|_Q)}^\dagger & I\end{array}\right)
			\left(\begin{array}{cc} \cA|_Q & 0 \\ 0 & \cA/\cA|_Q\end{array}\right)
			\left(\begin{array}{cc} I & {(\cA|_Q)}^\dagger B \\ 0 & I\end{array}\right).
	\end{equation}
	Now, the statement of the lemma easily follows by applying $\widehat{v}$
	on both sides of \eqref{decomp-26-04-1046}.
\end{proof}
%

\subsection{Hankel matrices}\label{SubS2.2}
Let $k\in \NN$.
For 
		$\beta=(\beta_0,\ldots,\beta_{2k} )\in \RR^{2k+1}$
we define the corresponding Hankel matrix as
	\begin{equation}\label{vector-v}
		A_{\beta}:=\left(\beta_{i+j} \right)_{i,j=0}^k
					=\left(\begin{array}{ccccc} 
							\beta_0 & \beta_1 &\beta_2 & \cdots &\beta_k\\
							\beta_1 & \beta_2 & \iddots & \iddots & \beta_{k+1}\\
							\beta_2 & \iddots & \iddots & \iddots & \vdots\\
							\vdots 	& \iddots & \iddots & \iddots & \beta_{2k-1}\\
							\beta_k & \beta_{k+1} & \cdots & \beta_{2k-1} & \beta_{2k}
						\end{array}\right)
					\in S_{k+1}.
	\end{equation}
Observe that
	$A_\beta$ is precisely the associated moment matrix $M_k(\beta)$ to $\beta$.
Let
	$\mbf{v_j}:=\left( \beta_{j+\ell} \right)_{\ell=0}^k$ be the $(j+1)$--th column of $A_{\beta}$, $0\leq j\leq k$.
	In this notation, we have that
		$$A_{\beta}=\left(\begin{array}{ccc} 
								\mbf{v_0} & \cdots & \mbf{v_k}
							\end{array}\right).$$
As in \cite{CF91}, the \textbf{rank} of $\beta$, denoted by $\Rank \beta$, is defined by
	$$\Rank \beta=
	\left\{\begin{array}{rl} 
		k+1,&	\text{if } A_{\beta} \text{ is nonsingular},\\
		\min\left\{i\colon \bf{v_i}\in \Span\{\bf{v_0},\ldots,\bf{v_{i-1}}\}\right\},&		\text{if } A_{\beta} \text{ is singular}.
	 \end{array}\right.$$
For $m\in \NN$ with $m\leq 2k$, the notation $A_{\beta}(m)$ stands for the upper left--hand corner of $A_{\beta}$ of size $m+1$, i.e.\  
		$$A_{\beta}(m)=\left(\beta_{i+j} \right)_{i,j=0}^m\in S_{m+1}.$$
The following proposition is an alternative description of $\Rank \beta$ if $A_\beta$ is singular.

\begin{proposition}[{\cite[Proposition 2.2]{CF91}}]\label{alternative-rank} 
	 Let $k\in \NN$, 
	$\beta=(\beta_0,\ldots,\beta_{2k})\in \RR^{2k+1}$,
	and assume that $A_\beta$ is positive semidefinite and singular.
	Then
		$$\Rank \beta=\min\{j\colon 0\leq j\leq k \text{ such that }A_\beta(j) 		
			\text{ is singular}\}.$$
\end{proposition}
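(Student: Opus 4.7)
The plan is to set $r:=\Rank \beta$, so that by definition $\mbf{v_r}\in \Span\{\mbf{v_0},\ldots,\mbf{v_{r-1}}\}$ while $\mbf{v_0},\ldots,\mbf{v_{r-1}}$ are linearly independent (if any $\mbf{v_i}$, $i<r$, were dependent on the earlier ones, we could take $i_0\leq i$ to be the largest index with a nonzero coefficient in the dependence and read off $\mbf{v_{i_0}}\in \Span\{\mbf{v_0},\ldots,\mbf{v_{i_0-1}}\}$, contradicting the minimality of $r$). The two claims to establish are then (a) $A_\beta(r)$ is singular, and (b) $A_\beta(j)$ is nonsingular for every $0\leq j<r$. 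Together they give $\min\{j\colon A_\beta(j)\text{ singular}\}=r$, which is exactly what is asserted.

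For (a), the idea is to restrict the column relation $\mbf{v_r}=\sum_{i=0}^{r-1}c_i\mbf{v_i}$ in $\RR^{k+1}$ to its first $r+1$ entries. By the Hankel structure of $A_\beta$, the truncations of $\mbf{v_0},\ldots,\mbf{v_r}$ to these entries are precisely the columns of $A_\beta(r)$, so the same scalars witness a nontrivial column dependence of $A_\beta(r)$. (The case $r=0$ is immediate: $\mbf{v_0}=\mbf 0$ forces $\beta_0=0$, whence $A_\beta(0)$ is the $1\times 1$ zero matrix.)

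For (b), I would argue by contradiction, leveraging positive semidefiniteness of $A_\beta$. If $A_\beta(j)$ were singular for some $0\leq j<r$, there would exist a nonzero $w\in \RR^{j+1}$ with $A_\beta(j)w=\mbf 0$. Since $A_\beta(j)$ is the principal submatrix $A_\beta|_Q$ corresponding to $Q=\{1,\ldots,j+1\}$, the extension principle (Lemma \ref{extension-principle}) yields $\widehat w\in \RR^{k+1}$ (obtained from $w$ by padding with zeros) such that $A_\beta\widehat w=\mbf 0$. Expanding, $A_\beta\widehat w=\sum_{i=0}^j w_i\mbf{v_i}=\mbf 0$ is a nontrivial dependence among $\mbf{v_0},\ldots,\mbf{v_j}$, contradicting $j<r$.

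The only nonroutine ingredient is the propagation of a kernel vector from a principal submatrix to the full psd matrix, which is exactly the content of Lemma \ref{extension-principle}; once that tool is in hand, both (a) and (b) are essentially bookkeeping, and no further psd-specific technique (e.g., the Schur-complement machinery of Subsection \ref{SubS2.1}) is needed.
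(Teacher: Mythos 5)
Your argument is correct. Note that the paper does not prove this proposition at all: it is quoted verbatim from \cite[Proposition 2.2]{CF91}, so there is no in-paper proof to compare against. Your two claims are exactly what is needed, and both steps are sound: the truncation in (a) works because the Hankel structure makes the first $r+1$ entries of $\mbf{v_0},\ldots,\mbf{v_r}$ precisely the columns of $A_\beta(r)$, and (b) is a direct application of Lemma \ref{extension-principle} with $Q=\{1,\ldots,j+1\}$ followed by the observation that $A_\beta\widehat w=\sum_{i=0}^{j}w_i\mbf{v_i}$. This matches in spirit the original Curto--Fialkow argument, which likewise rests on propagating a kernel vector of a leading principal submatrix of a psd matrix to the full matrix.
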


An important property of psd Hankel matrices is the following rank principle.

\begin{theorem}[{\cite[Corollary 2.5]{CF91}}]\label{rank-principle} 
	 Let $k\in \NN$, 
	$\beta=(\beta_0,\ldots,\beta_{2k})\in \RR^{2k+1}$, 
	$\widetilde \beta=(\beta_0,\ldots, \beta_{2k-2})\in \RR^{2k-1}$,
	$A_\beta\succeq 0$ and $r=\Rank \widetilde \beta$.
	Then:
  \begin{enumerate}
	\item\label{pt1-10-07-20} $\Rank A_{\widetilde \beta}=r$.
	\item $r\leq \Rank A_\beta \leq r+1$.
	\item $\Rank A_\beta = r$ if and only if 
				$\beta_{2k}=\varphi_0 \beta_{2k-r}+\ldots+
					\varphi_{r-1}\beta_{2k-1},$
	  where
			$$\left(\begin{array}{ccc}\varphi_0 & \cdots &\varphi_{r-1}\end{array}\right)^{T}:=
			(A_{\beta}(r-1))^{-1}\left(\begin{array}{ccc}\beta_r & \cdots &\beta_{2r-1}\end{array}\right)^{T}.$$
  \end{enumerate}
\end{theorem}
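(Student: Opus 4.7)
The overall strategy is to treat $A_{\widetilde\beta}$ as the leading $k\times k$ principal submatrix of $A_\beta$ and to combine the Schur-complement machinery of Section~\ref{SubS2.1} with the classical propagation of column relations in positive semidefinite Hankel matrices. For part~(1), $A_\beta\succeq 0$ forces $A_{\widetilde\beta}\succeq 0$, and by definition of $r$ the columns $\mathbf{v}_0,\ldots,\mathbf{v}_{r-1}$ are linearly independent, so $\Rank A_{\widetilde\beta}\ge r$. Conversely (in the nontrivial case $r\le k-1$), set $p(x):=x^r-\sum_{i=0}^{r-1}\varphi_i x^i$. The column relation $\mathbf{v}_r=\sum\varphi_i\mathbf{v}_i$ translates to $L_{\widetilde\beta}(p\cdot x^\ell)=0$ for $\ell=0,\ldots,k-1$, so in particular $L_{\widetilde\beta}(p^2)=0$. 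Iterated Cauchy--Schwarz applied to the psd bilinear form $(f,g)\mapsto L_{\widetilde\beta}(fg)$ then extends this to $L_{\widetilde\beta}(p\cdot g)=0$ for every $g$ with $\deg(pg)\le 2k-2$, forcing $\mathbf{v}_{r+j}\in\Span\{\mathbf{v}_0,\ldots,\mathbf{v}_{r-1}\}$ for $j=0,\ldots,k-1-r$ and yielding $\Rank A_{\widetilde\beta}=r$.

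For part~(2), I would decompose
\[ A_\beta=\begin{pmatrix} A_{\widetilde\beta} & b \\ b^T & \beta_{2k}\end{pmatrix},\qquad b:=(\beta_k,\ldots,\beta_{2k-1})^T. \]
Albert's Theorem~\ref{block-psd} gives $b\in\cC(A_{\widetilde\beta})$ and the scalar Schur complement $A_\beta/A_{\widetilde\beta}=\beta_{2k}-b^T A_{\widetilde\beta}^\dagger b\ge 0$, while Proposition~\ref{prop-2604-1140} yields $\Rank A_\beta=r+\Rank(A_\beta/A_{\widetilde\beta})\in\{r,r+1\}$. For part~(3), the same formula shows $\Rank A_\beta=r$ iff $A_\beta/A_{\widetilde\beta}=0$, which is equivalent to the column relation $\mathbf{w}_r\in\Span\{\mathbf{w}_0,\ldots,\mathbf{w}_{r-1}\}$ lifting from $A_{\widetilde\beta}$ to $A_\beta$. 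Writing $\mathbf{w}_r=\sum c_i\mathbf{w}_i$ and restricting to the first $r$ rows, the nonsingularity of $A_\beta(r-1)=A_{\widetilde\beta}(r-1)$ (Proposition~\ref{alternative-rank}) forces $c=\varphi$, the vector defined in the statement. Consequently $L_\beta(p\cdot x^\ell)=0$ for $\ell=0,\ldots,k$, whence $L_\beta(p^2)=0$, and a further iteration of Cauchy--Schwarz inside $L_\beta$ extends the vanishing to $L_\beta(p\cdot x^{2k-r})=0$, which unpacks precisely as $\beta_{2k}=\sum\varphi_i\beta_{2k-r+i}$. For the converse, combine that identity with the propagated $A_{\widetilde\beta}$-relations from part~(1) to place every $\mathbf{w}_j$ ($j\ge r$) in $\Span\{\mathbf{w}_0,\ldots,\mathbf{w}_{r-1}\}$, so that $\Rank A_\beta\le r$, and equality follows from part~(2).

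The main obstacle is the iterated Cauchy--Schwarz argument underlying the Hankel propagation. A single application bounds $L(p\cdot q)$ only on test polynomials $q$ whose degree fits inside half the sequence length, whereas the target identity in part~(3) requires $\ell=2k-r$, which exceeds that bound whenever $r<k$. The bootstrap proceeds by first establishing $L_\beta(p^2 x^{2m})=0$ for small $m$ (via Cauchy--Schwarz with $f=p$, $g=p x^{2m}$) and then using $f=p x^m$, $g=x^{\ell-m}$ to extend the vanishing to higher shifts; the Hankel structure guarantees that the full admissible range $\ell\le 2k-r$ is exhausted after finitely many iterations.
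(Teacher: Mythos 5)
First, a framing point: the paper does not prove this statement at all --- it is imported verbatim from \cite[Corollary 2.5]{CF91} --- so there is no in-paper proof to compare against and your argument must stand on its own. The Schur-complement skeleton you set up for parts (2) and (3) is sound and consistent with the paper's toolkit (Theorem \ref{block-psd}, Proposition \ref{prop-2604-1140}). The genuine gap is in the step that carries the whole theorem: propagating the single column relation $\mathbf{v}_r=\sum_{i<r}\varphi_i\mathbf{v}_i$ to all later columns of a psd Hankel matrix. In part (1) you run the Cauchy--Schwarz bootstrap inside $L_{\widetilde\beta}$, i.e.\ you use only $A_{\widetilde\beta}\succeq 0$, and under that hypothesis the claimed conclusion ``$L_{\widetilde\beta}(p\cdot g)=0$ for every $g$ with $\deg(pg)\le 2k-2$'' is false. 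Take $k=3$ and $\widetilde\beta=(1,0,0,0,1)$: then $A_{\widetilde\beta}\succeq 0$, $r=1$, $p(x)=x$, and $L_{\widetilde\beta}(px^{\ell})=0$ for $\ell\le 2$, yet $L_{\widetilde\beta}(px^{3})=\beta_4=1\neq 0$ and $\Rank A_{\widetilde\beta}=2>r$. (This $\widetilde\beta$ admits no psd extension to an $A_\beta$, so the theorem itself survives; but your proof of part (1) never invokes that extra positivity.) The bootstrap cannot even start there, because $g=px^{2}$ already has degree $3>k-1$, outside the region where the form $(f,g)\mapsto L_{\widetilde\beta}(fg)$ is known to be psd.

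The defect persists in part (3) even if you rerun the bootstrap with $L_{\beta}$, which is psd on $\RR[x]_{k}$. Tracking the recursion you describe ($f=px^{m}$, $g=x^{\ell-m}$, needing $2m+r$ inside the already-established vanishing range), the admissible shift stabilises at $m=k-r-1$, so you obtain $L_{\beta}(px^{\ell})=0$ only for $\ell\le 2k-r-1$. That happens to cover part (1), but part (3) needs $\ell=2k-r$, and $L_{\beta}(px^{2k-r})=\beta_{2k}-\sum_{i}\varphi_i\beta_{2k-r+i}$ is exactly the quantity whose vanishing the statement characterizes --- it is nonzero precisely when $\Rank A_\beta=r+1$ --- so no positivity argument applied to $\beta$ alone can force it to vanish; you must instead convert the hypothesis $\Rank A_\beta=r$ into the shifted relation $\mathbf{w}_k=\sum_{i}\varphi_i\mathbf{w}_{k-r+i}$ and read off its last row. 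The standard repair is the structure theorem \cite[Theorem 2.4]{CF91}: prove $\mathbf{w}_j=\sum_{i}\varphi_i\mathbf{w}_{j-r+i}$ for all $r\le j\le k$ by induction on $j$, at each step applying the positivity of a larger principal corner of $A_\beta$ in the manner of Lemma \ref{extension-principle} (which is the matrix form of the one-step Cauchy--Schwarz you are implicitly using). With that propagation lemma in hand, your identification $c=\varphi$ via the invertibility of $A_\beta(r-1)$ (Proposition \ref{alternative-rank}) and the rest of parts (2) and (3) go through.
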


We will use the following corollary of Proposition \ref{alternative-rank} and Theorem \ref{rank-principle} in the sequel.

\begin{corollary}\label{rank-theorem-2}
	In the notation of Theorem \ref{rank-principle}, the assumptions 
	$A_\beta\succeq 0$, $A_\beta$ is singular and $r=\Rank \widetilde\beta$, imply that
	  $$r=\Rank \beta=\Rank A_\beta(r-1)=\Rank A_\beta(r)=\ldots=
	  	\Rank A_{\beta}(k-1)=\Rank A_{\widetilde{\beta}}.$$
\end{corollary}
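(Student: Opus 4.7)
The plan is to exploit the nested identity $A_\beta(j) = A_{\widetilde\beta}(j)$ for $0 \leq j \leq k-1$, which is immediate from the Hankel structure, together with Proposition \ref{alternative-rank} and Theorem \ref{rank-principle} applied to both $\beta$ and $\widetilde\beta$. The equality $\Rank A_{\widetilde\beta} = r$ at the right end of the chain is already provided by Theorem \ref{rank-principle}(\ref{pt1-10-07-20}), so what remains is to compute $\Rank \beta$ and $\Rank A_\beta(j)$ for $r-1 \leq j \leq k-1$.

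I would split into two cases according to whether $A_{\widetilde\beta}$ is singular. In the main case $r < k$, the $k \times k$ matrix $A_{\widetilde\beta}$ is psd of rank $r$ and hence singular, so Proposition \ref{alternative-rank} applied to $\widetilde\beta$ yields that $A_{\widetilde\beta}(r-1)$ is nonsingular while $A_{\widetilde\beta}(r)$ is singular. Sandwiching
$$r = \Rank A_{\widetilde\beta}(r-1) \leq \Rank A_{\widetilde\beta}(j) \leq \Rank A_{\widetilde\beta} = r \qquad \text{for } r-1 \leq j \leq k-1$$
forces $\Rank A_{\widetilde\beta}(j) = r$ throughout, and the nested identity transfers this to $\Rank A_\beta(j) = r$ on the same range. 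Finally, Proposition \ref{alternative-rank} applied to $\beta$, which is permitted since $A_\beta$ is psd and singular by hypothesis, gives $\Rank \beta = \min\{j : A_\beta(j) \text{ singular}\}$; this minimum equals $r$ because $A_\beta(j) = A_{\widetilde\beta}(j)$ is nonsingular for $j < r$ while $A_\beta(r) = A_{\widetilde\beta}(r)$ is singular.

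In the degenerate case $r = k$, the matrix $A_{\widetilde\beta} = A_\beta(k-1)$ is $k \times k$ nonsingular, so already $\Rank A_\beta(k-1) = k = r$. Theorem \ref{rank-principle}(2) gives $r \leq \Rank A_\beta \leq r+1$, and the assumed singularity of $A_\beta$ cuts this down to $\Rank A_\beta = k = r$; Proposition \ref{alternative-rank} applied to $\beta$ then yields $\Rank \beta = r$, using that $A_\beta(k-1)$ is nonsingular and $A_\beta(k) = A_\beta$ is singular. The chain of intermediate ranks collapses, since $r-1 = k-1$, to the single equality already obtained. The only subtlety of the proof, and the one piece requiring care, is that Proposition \ref{alternative-rank} for $\widetilde\beta$ cannot be invoked when $r = k$ because $A_{\widetilde\beta}$ is then nonsingular; separating this case from the outset avoids the issue.
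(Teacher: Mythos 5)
Your proof is correct and follows exactly the route the paper intends: the paper omits a proof and simply presents the statement as a corollary of Proposition \ref{alternative-rank} and Theorem \ref{rank-principle}, and your argument fills in the details using precisely those two results together with the observation that $A_\beta(j)=A_{\widetilde\beta}(j)$ for $j\leq k-1$. The case split at $r=k$ is a sensible precaution and does not depart from the intended argument.
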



\subsection{Solution of the truncated Hamburger moment problem}\label{SubS2.3}

For $x=(x_0,\ldots,x_m)\in \RR^{m+1}$ we denote by $V_x\in \RR^{(m+1)\times (m+1)}$ the Vandermonde matrix 
	$$V_x:=
		\left(\begin{array}{cccc}
		1 & 1 & \cdots & 1\\
		x_0 & x_1 & \cdots & x_m\\
		\vdots & \vdots &  & \vdots\\
		x_0^m & x_1^m & \cdots & x_m^m
		\end{array}\right).$$ 

\begin{theorem} [{\cite[Theorems 3.9 and 3.10]{CF91} and \cite[Theorem 2.7.5]{BW11}}]\label{Hamburger} 
	For $k\in \NN$ and $\beta=(\beta_0,\ldots,\beta_{2k})\in \RR^{2k+1}$ with $\beta_0>0$, the following statements are equivalent:
\begin{enumerate}	
	\item There exists a $\RR$--representing measure for $\beta$, i.e. supported on $\RR$.
	\item There exists a $(\Rank \beta)$--atomic representing measure for $\beta$.
	\item\label{pt4-v2206} $A_\beta\succeq 0$ and $\Rank A_\beta=\Rank \beta$.
	\item\label{pt5-v2206} $A_\beta\succeq 0$ and \big($A_\beta(k-1)\succ 0$ or $\Rank A_\beta(k-1)=\Rank A_\beta$\big).
	\item\label{pt6-v2206} $A_\beta\succeq 0$ and 
		$\left(\begin{array}{ccc} \beta_{k+1} & \cdots & \beta_{2k}\end{array}\right)^T
			\in \cC(A_{\beta}(k-1)).$
\end{enumerate}

Moreover, if $\beta$ with $r:=\Rank\beta$ has a $\RR$--representing measure and:
\begin{enumerate}[(i)]
	\item\label{031121-2051} 
$r\leq k$, then the $\RR$--representing measure $\mu$ is unique and of the form
	$\mu=\sum_{i=0}^{r-1}\rho_i\delta_{x_i},$
where 
\begin{align*}
	\{x_0,\ldots,x_{r-1}\}
		&=\mathcal Z(t^r-(\varphi_0+\varphi_1 t+\ldots+\varphi_{r-1}t^{r-1})),\\ 
	\left(\begin{array}{ccc}\varphi_0 & \cdots &\varphi_{r-1}\end{array}\right)^{T}
		&:=A_{\beta}(r-1)^{-1}\left(\begin{array}{ccc}\beta_r & \ldots &\beta_{2r-1}\end{array}\right)^{T}, \\
	\left(\begin{array}{ccc}\rho_0 & \cdots &\rho_{r-1}\end{array}\right)^{T}
		&:=V_x^{-1}\mathbf{v_0^{(r-1)}},\quad x=(x_0,\ldots,x_{r-1})\quad \text{and}\quad 
	\mathbf{v_0^{(r-1)}}=\left(\begin{array}{ccc}\beta_0 & \cdots &\beta_{r-1}\end{array}\right)^{T}.
\end{align*}
	\item $r=k+1$, then there are infinitely many $\RR$--representing measures for $\beta$. All $(k+1)$--atomic ones
		are obtained by choosing $\beta_{2k+1}\in \RR$ arbitrarily, defining 
		$\beta_{2k+2}:=A_\beta^{-1}\mathbf{v_{k+1}^{(k)}}$, where  
		$\mathbf{v_{k+1}^{(k)}}=\left(\begin{array}{ccc}\beta_{k+1} & \cdots &\beta_{2k+1}\end{array}\right)^{T}$,
		and proceeding as in \eqref{031121-2051} for $\widetilde \beta:=(\beta_0,\ldots,\beta_{2k+1},\beta_{2k+2})\in \RR^{2k+3}.$ 
\end{enumerate}
\end{theorem}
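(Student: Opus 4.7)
The plan is to establish the five-way equivalence in two stages: first, show that (3) $\Leftrightarrow$ (4) $\Leftrightarrow$ (5) as purely matrix-theoretic statements about $A_\beta$, and then close the cycle (2) $\Rightarrow$ (1) $\Rightarrow$ (3) $\Rightarrow$ (2) connecting the measure-theoretic conditions to the matrix ones. The construction used in (3) $\Rightarrow$ (2) will simultaneously supply the explicit atom/density formulas of the ``moreover'' part.

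For the matrix equivalences I would split on singularity of $A_\beta$. If $A_\beta$ is nonsingular, then $\Rank A_\beta = k+1 = \Rank \beta$ by convention, and $A_\beta(k-1)\succ 0$ as a principal submatrix of a pd matrix, so (3), (4) and (5) hold trivially. If $A_\beta$ is singular, Corollary \ref{rank-theorem-2} automatically gives $\Rank \beta = \Rank A_\beta(k-1)$, so (3) collapses to $\Rank A_\beta = \Rank A_\beta(k-1)$, which is the nontrivial clause of (4). For (4) $\Leftrightarrow$ (5) I would write $A_\beta$ as the $2\times 2$ block $\begin{pmatrix} A_\beta(k-1) & u \\ u^T & \beta_{2k} \end{pmatrix}$ with $u=(\beta_k,\dots,\beta_{2k-1})^T$, and use Theorem \ref{block-psd} together with Proposition \ref{prop-2604-1140} to identify $\Rank A_\beta = \Rank A_\beta(k-1)$ with vanishing of the Schur complement $\beta_{2k} - u^T A_\beta(k-1)^\dagger u = 0$; the Hankel structure of $u$ then translates this vanishing, after re-indexing the row equations of $u = A_\beta(k-1)\varphi$, into the membership statement $(\beta_{k+1},\dots,\beta_{2k})^T \in \cC(A_\beta(k-1))$.

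For the measure conditions, (2) $\Rightarrow$ (1) is immediate. For (1) $\Rightarrow$ (3), a representing measure $\mu$ gives $L_\beta(p^2) = \int p^2\,d\mu \geq 0$ for every $p \in \RR[t]_k$, yielding $A_\beta \succeq 0$, while any column relation $p(X)=\mathbf{0}$ forces $\supp\mu \subseteq \cZ(p)$, and the finiteness of this zero set propagates through the Hankel structure to $\Rank A_\beta = \Rank \beta$. The central step is the constructive (3) $\Rightarrow$ (2). Setting $r := \Rank \beta$, if $r \leq k$ then Proposition \ref{alternative-rank} gives $A_\beta(r-1) \succ 0$, so $\varphi := A_\beta(r-1)^{-1}(\beta_r,\dots,\beta_{2r-1})^T$ from Theorem \ref{rank-principle}(3) is well-defined; the polynomial $q(t) := t^r - \sum_{j=0}^{r-1}\varphi_j t^j$ has its zeros $x_0,\dots,x_{r-1}$ taken as atoms, and the densities $\rho_i$ are recovered uniquely from the Vandermonde system $V_x\rho = (\beta_0,\dots,\beta_{r-1})^T$. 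If $r = k+1$ then $A_\beta \succ 0$, and for any $\beta_{2k+1}\in\RR$ I would set $\beta_{2k+2}$ by the prescribed formula to produce a flat extension $\widetilde\beta$ of rank $k+1$, apply the previous case to $\widetilde\beta$, and obtain infinitely many $(k+1)$-atomic measures by varying $\beta_{2k+1}$. Uniqueness when $r \leq k$ follows since every representing measure must be supported in $\cZ(q)$ (by the (1) $\Rightarrow$ (3) argument applied to $p = q$), and the densities are then pinned down by the Vandermonde system.

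The main obstacle I anticipate is verifying, in the case $r \leq k$, that the zeros of $q$ are real and distinct and that the recovered densities $\rho_i$ are strictly positive. Both rely crucially on $A_\beta(r-1) \succ 0$: reality and distinctness of the zeros come from recognizing $q$ as (essentially) the characteristic polynomial of a real symmetric Jacobi-type matrix built from $L_\beta$, and positivity of the $\rho_i$ follows by interpreting each as the squared norm of a Lagrange basis element in $\RR[t]_{r-1}$ equipped with the inner product induced by $L_\beta$.
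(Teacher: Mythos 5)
The paper does not prove this theorem: it is imported verbatim from \cite[Theorems 3.9 and 3.10]{CF91} and \cite[Theorem 2.7.5]{BW11} and used as a black box throughout, so there is no internal proof to measure your attempt against. That said, your plan is a faithful reconstruction of the classical argument from those sources: $(1)\Rightarrow(3)$ via positivity of $L_\beta$ on squares and recursive propagation of the degree-$r$ column relation; the constructive $(3)\Rightarrow(2)$ via the roots of $q(t)=t^r-\sum_j\varphi_jt^j$ and Vandermonde recovery of the densities; self-adjointness and cyclicity of multiplication by $t$ on $\big(\RR[t]_{r-1},\ \langle p,s\rangle=L_\beta(ps)\big)$ for reality and simplicity of the roots; $\rho_i=L_\beta(\ell_i^2)>0$ (with $\ell_i$ the Lagrange polynomials) for positivity of the densities; and a flat extension for $r=k+1$.

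Two steps carry more weight than your sketch suggests. In $(4)\Leftrightarrow(5)$, the passage from the membership condition $(\beta_{k+1},\dots,\beta_{2k})^T\in\cC(A_\beta(k-1))$ back to $\Rank A_\beta=\Rank A_\beta(k-1)$ when $A_\beta(k-1)$ is singular is not a mere re-indexing of $u=A_\beta(k-1)\varphi$: the membership hypothesis gives a recursion with some coefficient vector $\psi$ valid for row indices $1,\dots,k$, whereas the vanishing of the Schur complement requires the recursion at index $k$ with the vector $\varphi$ supplied by positive semidefiniteness, and reconciling $\psi$ with $\varphi$ uses the rank structure of singular psd Hankel matrices (Proposition \ref{alternative-rank}, Corollary \ref{rank-theorem-2}, and the extension principle of Lemma \ref{extension-principle}); this is the substantive half of the \cite{BW11} equivalence. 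Likewise, after solving the $r\times r$ Vandermonde system you must still verify $\sum_i\rho_ix_i^j=\beta_j$ for all $j\le 2k$, not only $j\le r-1$; this holds because both sides satisfy the same order-$r$ linear recursion --- the moments because $\Rank A_\beta=\Rank\beta=r$ forces the column relation to propagate through all of $A_\beta$, the power sums because each $x_i$ is a root of $q$. Neither point invalidates your route, but both are exactly where the cited proofs spend their effort.
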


\section{The TMP on the union of two parallel lines}
\label{S3}

For $k\geq 2$, let 
		$\beta:=\beta^{(2k)}=(\beta_{i,j})_{i,j\in \ZZ_+,i+j\leq 2k}$ 
	be a real bivariate sequence of degree $2k$
	such that $\beta_{0,0}>0$ and let $M_k$ be its associated moment matrix. 
	To establish the existence of a representing measure for $\beta$ supported
	on the union of two parallel lines, we can assume, after applying the appropriate affine linear transformation, 
	that the variety is
		$$K=\{(x,y)\in \RR^2\colon (y-\alpha_1)(y-\alpha_2)=0\},$$
	where $\alpha_1,\alpha_2\in \RR$ are pairwise distinct nonzero real numbers with $\alpha_1<\alpha_2$.
We write 
		$$\vec{X}^{(i)}:=(1,X,\ldots,X^{k-i})\qquad \text{and}\qquad Y^j\vec{X}^{(i)}:=(Y^j,Y^jX,\ldots,Y^jX^{k-i})$$
	for $i=0,\ldots,k$ and $j\in \NN$ such that $j\leq i$.
Let $P$ be a permutation matrix such that moment matrix $PM_kP^T$ has rows and columns indexed in the order $\vec{X}^{(0)}, Y\vec{X}^{(1)},Y^2\vec{X}^{(2)},\ldots,Y^k$.
Let
		$$M:=
		(PM_kP^T)|_{\{\vec{X}^{(0)},Y\vec{X}^{(1)}\}}=
		\bordermatrix{
		& \vec{X}^{(0)} & Y\vec{X}^{(1)} \cr
			(\vec{X}^{(0)})^T & A & B \cr 
			(Y\vec{X}^{(1)})^T & B^T & C }=
		\bordermatrix{
		& \vec{X}^{(1)} & X^k & Y\vec{X}^{(1)} \cr
			(\vec{X}^{(1)})^T & A_{1} & a & B_1\cr \rule{0pt}{0.8\normalbaselineskip} 
			X^k & a^T & \beta_{2k,0} & b^T \cr \rule{0pt}{0.8\normalbaselineskip} 
			(Y\vec{X}^{(1)})^T &  B_1 & b & C}
		$$
	and
		$$N:=
		(PM_kP^T)|_{\{\vec{X}^{(1)},Y\vec{X}^{(1)}\}}=
		\bordermatrix{
		& \vec{X}^{(1)} & Y\vec{X}^{(1)} \cr
			(\vec{X}^{(1)})^T & A_1 & B_1 \cr
			(Y\vec{X}^{(1)})^T & B_1 & C}$$
	be the restrictions of the moment matrix $PM_kP^T$ to the rows and the columns in the sets $\{\vec{X}^{(0)},Y\vec{X}^{(1)}\}$ and $\{\vec{X}^{(1)},Y\vec{X}^{(1)}\}$, 
	respectively.

\begin{theorem}\label{y2=1}
	Let $K:=\{(x,y)\in \RR^2\colon (y-\alpha_1)(y-\alpha_2)=0\}$, $\alpha_1,\alpha_2\in \RR$, $\alpha_1\neq \alpha_2$,
	be a union of two parallel lines and 	$\beta:=\beta^{(2k)}=(\beta_{i,j})_{i,j\in \ZZ_+,i+j\leq 2k}$, where $k\geq 2$.
	Then the following statements are equivalent:
\begin{enumerate}	
	\item\label{pt1-1003-1125} $\beta$ has a $K$--representing measure.
	\item\label{pt2-1003-1125} $\beta$ has a $(\Rank M_k)$--atomic $K$--representing measure.
	\item\label{pt3-1003-1125} $M_k$ is positive semidefinite, recursively generated and satisfies the column relation
		\begin{equation}\label{col-relation-2303-1028}
			(Y-\alpha_1)(Y-\alpha_2)=\mbf{0}.
		\end{equation}
	\item\label{pt3-1803-0930} $M$ is positive semidefinite, the relations
	\begin{equation}\label{moment-relations-2303-1028-v2}
		\beta_{i,j+2}=(\alpha_1+\alpha_2)\cdot \beta_{i,j+1}-\alpha_1\alpha_2 \cdot \beta_{i,j}
	\end{equation}
	hold for every $i,j\in \ZZ_+$ with $i+j\leq 2k-2$
	 and one of the following statements holds:
		\begin{enumerate}
			\item\label{pt1-1703-2257} $B_1-\alpha_1 A_1$ is invertible.
			\item\label{pt2-1703-2257} $\alpha_2 A_1-B_1$ is invertible.
			\item\label{pt3-1703-2319} $\Rank M=\Rank N$.
		\end{enumerate}
\end{enumerate}
\end{theorem}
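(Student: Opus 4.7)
The plan is to follow the strategy announced in the introduction: reduce the bivariate moment problem to two instances of the THMP (Theorem \ref{Hamburger}) via an additive decomposition $\beta = \widetilde\beta + \widehat\beta$. First I would apply Proposition \ref{251021-2254} with the invertible ALT $\phi(x,y) = (x, (y-\alpha_1)/(\alpha_2-\alpha_1))$, reducing to the case $\alpha_1 = 0$, $\alpha_2 = 1$, so $K = \{y(y-1) = 0\}$. The implication $(\ref{pt2-1003-1125}) \Rightarrow (\ref{pt1-1003-1125})$ is trivial, and $(\ref{pt1-1003-1125}) \Rightarrow (\ref{pt3-1003-1125})$ is routine: any $K$-representing measure forces $M_k \succeq 0$, recursive generation, and $(Y-\alpha_1)(Y-\alpha_2) = \mathbf 0$ from $\supp\mu \subseteq K$.

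The central algebraic step is a change of basis in $M$: replace the columns $\vec X^{(1)}$ by $(1-Y)\vec X^{(1)}$ and set
\[
a_i := \beta_{i,0} - \beta_{i,1}, \qquad b_i := \beta_{i,1} \qquad (0 \le i \le 2k-1),
\]
the putative moments of the two pieces of $\beta$. Using the moment relations (\ref{moment-relations-2303-1028-v2}), $M$ is congruent to
\[
\widetilde M = \begin{pmatrix} A_a(k-1) & 0 & \mathbf v_a \\ 0 & A_b(k-1) & \mathbf v_b \\ \mathbf v_a^T & \mathbf v_b^T & \beta_{2k,0} \end{pmatrix},
\]
with $A_a(k-1)$, $A_b(k-1)$ the Hankel matrices of $(a_i)_{i\le 2k-2}$, $(b_i)_{i\le 2k-2}$ and $\mathbf v_a = (a_k,\ldots,a_{2k-1})^T$, $\mathbf v_b = (b_k,\ldots,b_{2k-1})^T$. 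In these coordinates, condition (a) becomes ``$A_b(k-1)$ invertible'', (b) becomes ``$A_a(k-1)$ invertible'', and (c) unfolds (by Proposition \ref{prop-2604-1140} and Theorem \ref{block-psd}(\ref{021121-1052})) to the equality $\beta_{2k,0} = \Delta$, where $\Delta := \mathbf v_a^T A_a(k-1)^\dagger \mathbf v_a + \mathbf v_b^T A_b(k-1)^\dagger \mathbf v_b$.

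For the main content, $(\ref{pt3-1803-0930}) \Rightarrow (\ref{pt2-1003-1125})$, the constraint $a_{2k} + b_{2k} = \beta_{2k,0}$ leaves one degree of freedom. Depending on which of (a), (b), (c) holds, one chooses the split so that both univariate sequences $(a_i), (b_i)$ satisfy Theorem \ref{Hamburger}: in case (b) set $a_{2k} := \mathbf v_a^T A_a(k-1)^{-1} \mathbf v_a$ and $b_{2k} := \beta_{2k,0}-a_{2k}$ (case (a) is symmetric); in case (c) take both $a_{2k}, b_{2k}$ at their respective Schur-complement bounds (which sum to $\beta_{2k,0}$ by hypothesis). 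In every case, Schur-complement inequalities applied to $\widetilde M \succeq 0$ verify that $A_a, A_b \succeq 0$ and that the Hankel rank conditions hold, so that Theorem \ref{Hamburger} delivers univariate representing measures $\mu_a, \mu_b$. Placing them on $y=0$ and $y=1$ gives a $K$-representing measure with $\Rank A_a + \Rank A_b$ atoms. The atom count equals $\Rank M_k$: the relation $Y^2 = Y$ together with rg makes the columns of $M_k$ indexed by $Y^j X^i$ for $j \ge 2$ redundant, so $\Rank M_k = \Rank M = \Rank \widetilde M$, and the block structure of $\widetilde M$ combined with Proposition \ref{prop-2604-1140} yields $\Rank \widetilde M = \Rank A_a + \Rank A_b$.

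Finally, for $(\ref{pt3-1003-1125}) \Rightarrow (\ref{pt3-1803-0930})$, the moment relations follow from rg applied to $X^i Y^j (Y^2 - Y) = \mathbf 0$. I expect the main obstacle to be showing that at least one of (a), (b), (c) holds. If both $A_a(k-1)$ and $A_b(k-1)$ are singular, $M_k$ carries column relations $p_a(X)(1-Y) = \mathbf 0$ and $p_b(X) Y = \mathbf 0$ of degrees $r_a, r_b < k$; after adjusting one by a power of $X$ so that both have the same degree and summing, one obtains a relation expressing $X^{r_a}$ in terms of $\{X^i, X^i Y : i < r_a\}$. Multiplying by $X^{k-r_a}$ via rg puts $X^k$ in the span of $\vec X^{(1)} \cup Y\vec X^{(1)}$, which yields $\Rank M = \Rank N$, i.e., (c). The delicate point is the degree bookkeeping within $\RR[x,y]_k$ needed for each application of rg.
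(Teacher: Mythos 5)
Your overall strategy is exactly the one the paper follows: normalize the lines to $y=0$ and $y=1$ by an affine linear transformation, split $\beta$ additively into a piece supported on each line, observe that only the top-degree moment $\beta_{2k,0}$ is free to distribute between the two pieces, and invoke the truncated Hamburger theorem for each piece. The congruence of $M$ to your block matrix $\widetilde M$ is a clean repackaging of the paper's Schur-complement computation, and your reading of conditions (a), (b), (c) as invertibility of $A_b(k-1)$, invertibility of $A_a(k-1)$, and $\beta_{2k,0}=\Delta$ is correct. The argument for $\eqref{pt3-1003-1125}\Rightarrow\eqref{pt3-1803-0930}$ via summing the two column relations and multiplying by a power of $X$ is also the paper's argument (the degree bookkeeping you flag is handled there by normalizing both kernel vectors to degree exactly $k-1$ using Corollary \ref{rank-theorem-2} together with the extension principle, so that only one multiplication by $X$ is needed).

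There is, however, a genuine error in the construction for cases (a) and (b): you assign the flat extension to the wrong summand. In your case (b) you know only that $A_a(k-1)$ is invertible; you set $a_{2k}$ to the flat value and push the entire excess $\delta:=\beta_{2k,0}-\Delta$ onto $b_{2k}$. If $A_b(k-1)$ is singular and $\delta>0$, then $A_b\succeq 0$ but $\Rank A_b=\Rank A_b(k-1)+1$ with $A_b(k-1)$ not positive definite, so condition \eqref{pt5-v2206} of Theorem \ref{Hamburger} fails and the sequence $(b_i)$ has no representing measure. A concrete instance: take $k+1$ atoms on $y=0$ in general position and a single atom on $y=1$; then $A_a(k-1)\succ 0$, $\Rank A_b(k-1)=1<k$, and $\delta>0$, and your prescription manufactures a $b$-sequence with no measure even though $\beta$ itself is representable. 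The correct assignment (the one the paper makes) is the opposite: give the flat value to the summand whose leading $k\times k$ block may be singular, and absorb the excess $\delta$ into the summand whose leading block is positive definite, since that summand satisfies \eqref{pt5-v2206} of Theorem \ref{Hamburger} for every $\delta\geq 0$. With that swap (and the analogous swap in case (a)), the rest of your argument, including the atom count $\Rank A_a+\Rank A_b=\Rank M_k$, goes through.
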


The following corollary states that $\beta$ such that $M_k$ is psd always admits a representing measure on the union of two lines in the following two cases:
\begin{itemize}
	\item $\beta$ is \textit{pure}, i.e.\ the only column relations of $M_k$ come from the union of the lines.
	\item $\beta$ is \textit{almost pure}, i.e.\ except the column relations coming from the union of the lines we also have $X^k\in \cC(\vec{X}^{(1)},Y\vec{X}^{(1)})$.
\end{itemize}

\begin{corollary}\label{cor-0304-1917}
	Let $K:=\{(x,y)\in \RR^2\colon (y-\alpha_1)(y-\alpha_2)=0\}$, $\alpha_1,\alpha_2\in \RR$, $\alpha_1\neq \alpha_2$,
	be a union of two parallel lines, $\beta:=\beta^{(2k)}=(\beta_{i,j})_{i,j\in \ZZ_+,i+j\leq 2k}$ where $k\geq 2$
	and $M, N$ as in Theorem \ref{y2=1}. If
	the relations
		$\beta_{i,j+2}=(\alpha_1+\alpha_2)\cdot \beta_{i,j+1}-\alpha_1\alpha_2 \cdot \beta_{i,j}$
	hold for every $i,j\in \ZZ_+$ with $i+j\leq 2k-2$,
	$M$ is positive semidefinite and $\Rank N\in \{2k-1,2k\}$, then $\beta$ has a $K$--representing measure.
\end{corollary}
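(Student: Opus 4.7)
The strategy is to verify hypothesis \eqref{pt3-1803-0930} of Theorem \ref{y2=1}: since $M$ is assumed psd and the moment relations \eqref{moment-relations-2303-1028-v2} are given, it suffices to establish one of the subconditions (a), (b), (c). I will prove that the rank hypothesis $\Rank N \in \{2k-1, 2k\}$ forces at least one of (a), (b) to hold; equivalently, at least one of $B_1 - \alpha_1 A_1$ and $\alpha_2 A_1 - B_1$ is invertible. Condition (c) will not be directly used.

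The key step is a clean congruence decomposition of $N$ that separates it into two one-dimensional Hankel pieces, corresponding intuitively to the candidate $x$-moment Hankel matrices of the restrictions of the sought measure to each of the two lines $y = \alpha_j$. Define the $k \times k$ matrices
\begin{equation*}
H_1 := \tfrac{1}{\alpha_2 - \alpha_1}(\alpha_2 A_1 - B_1), \qquad H_2 := \tfrac{1}{\alpha_2 - \alpha_1}(B_1 - \alpha_1 A_1),
\end{equation*}
so that $H_1 + H_2 = A_1$ and $\alpha_1 H_1 + \alpha_2 H_2 = B_1$. The moment relations \eqref{moment-relations-2303-1028-v2} specialized to $j = 0$ give $C = (\alpha_1 + \alpha_2) B_1 - \alpha_1 \alpha_2 A_1$, which in terms of $H_1, H_2$ becomes $C = \alpha_1^2 H_1 + \alpha_2^2 H_2$. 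A direct multiplication then verifies
\begin{equation*}
N = U \begin{pmatrix} H_1 & 0 \\ 0 & H_2 \end{pmatrix} U^T, \qquad U := \begin{pmatrix} I_k & I_k \\ \alpha_1 I_k & \alpha_2 I_k \end{pmatrix},
\end{equation*}
and since $\det U = (\alpha_2 - \alpha_1)^k \neq 0$, the matrix $U$ is invertible.

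Because $N$ is a principal submatrix of $M \succeq 0$, it is psd, so the congruence above forces $H_1, H_2 \succeq 0$ and $\Rank N = \Rank H_1 + \Rank H_2$. As each $H_i \in S_k$ has rank at most $k$, the assumption $\Rank N \geq 2k - 1$ implies that at least one of $\Rank H_1$, $\Rank H_2$ equals $k$, and hence at least one of $\alpha_2 A_1 - B_1$, $B_1 - \alpha_1 A_1$ is invertible. This establishes (b) or (a) of Theorem \ref{y2=1}\eqref{pt3-1803-0930}, and that theorem then delivers the desired $K$-representing measure. The only nontrivial ingredient is spotting the congruence $N = U \cdot \mathrm{diag}(H_1, H_2) \cdot U^T$, which is essentially forced by the moment relations; once it is in hand, the rank count is immediate and no further work is required.
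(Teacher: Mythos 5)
Your proposal is correct and follows essentially the same route as the paper: both arguments reduce the corollary to the rank identity $\Rank N=\Rank(\alpha_2A_1-B_1)+\Rank(B_1-\alpha_1A_1)$, observe that $\Rank N\geq 2k-1$ then forces one of these $k\times k$ matrices to be invertible, and invoke condition \eqref{pt1-1703-2257} or \eqref{pt2-1703-2257} of Theorem \ref{y2=1}\eqref{pt3-1803-0930}. The only (cosmetic) difference is that you derive the rank identity self-containedly via the explicit congruence $N=U\,\mathrm{diag}(H_1,H_2)\,U^T$, whereas the paper obtains it by citing the intermediate equalities \eqref{021121-0801} and \eqref{rank2-2303-0815} from the proof of Theorem \ref{y2=1}.
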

\begin{remark}
	\begin{enumerate}
	\item\label{pt1-091221-2355}
		As already described in Section \ref{S1}, the main idea behind the proof of Theorem \ref{y2=1} 
	is applying the ALT such that one of the lines becomes $y=0$ and then studying the existence of the decompositions 
	$\beta=\widetilde \beta+\widehat \beta$ such that $\widetilde\beta$, $\widehat \beta$ have representing measures 
	supported on $y=0$ and on $y=1$, respectively. Note that due to the form of the atoms it suffices to study the representations of
	$(M_k(\widetilde \beta))|_{\{\vec{X}^{(0)}\}}$ and $(M_k(\widehat \beta))|_{\{\vec{X}^{(0)}\}}$, since $M_k(\widetilde \beta)$ is non-zero
	only when restricted to the rows/columns in $\{\vec{X}^{(0)}\}$ and in $M_k(\widehat \beta)$ the blocks $(M_k(\widehat \beta))|_{\{Y^{\ell_1}\vec{X}^{(i)}\},\{Y^{\ell_2}\vec{X}^{(j)}\}}$
	are copies of $(M_k(\widehat \beta))|_{\{\vec{X}^{(i)}\},\{\vec{X}^{(j)}\}}$.
	To study the representations of $(M_k(\widetilde \beta))|_{\{\vec{X}^{(0)}\}}$ and $(M_k(\widehat \beta))|_{\{\vec{X}^{(0)}\}}$ we need to use the solution to the THMP.
	Note that due to the column relation $Y=1$, which must hold in $M_k(\widehat \beta)$, the only undetermined moment is $\widehat\beta_{2k,0}$.
	Computing a Schur complement of $(M_k)|_{\{Y\vec{X}^{(1)}\}}$ in the $2\times 2$ block decomposition of $(M_k)|_{\{\vec{X}^{(0)},Y\vec{X}^{(1)}\}}$
	gives us a candidate for $\widehat\beta_{2k,0}$ and hence $\widetilde\beta_{2k,0}=\beta_{2k,0}-\widehat\beta_{2k,0}$. 
	Further, we are able to characterize in terms of the ranks of certain submatrices of $M_k$ (see Theorem \ref{y2=1})  
	the conditions on $\widehat\beta_{2k,0}$ such that $\widetilde\beta$, $\widehat \beta$ both satisfy the THMP. 
	\item\label{pt2-091221-2355}
		Let us compare our Theorem \ref{y2=1} with Fialkow's original solution to the TMP--2pl \cite[Theorem 2.1]{Fia15}.
	As already commented in Section \ref{S1}, \cite[Theorem 2.1]{Fia15} requires $M_k$ being rg and fulfilling the variety condition $\Rank M_k\leq \card \cV(M_k)$.
	The variety condition is numerically more difficult to check than the rank conditions in Theorem \ref{y2=1} above, since it requires computing the variety of $M_k$.
	Moreover, to further apply the solution to the TMP--2pl when solving the TMP--3pl, \cite[Theorem 2.1]{Fia15} is not concrete enough when applied for a symbolic 
	sequence $\beta$ which we will need in Section \ref{S4}, while using Theorem \ref{y2=1} turns out to be concrete enough for this aim. 
	\item 
		Let us also compare the proofs of Theorem \ref{y2=1} and \cite[Theorem 2.1]{Fia15}. The idea of the proof of Theorem \ref{y2=1} described in 
	\eqref{pt1-091221-2355} is technically not very demanding. The most demanding part is keeping track on the ranks of the matrices $M_k(\widetilde \beta)$
	and $M_k(\widehat \beta)$ when manipulating $\widetilde \beta_{2k,0}$, $\widehat \beta_{2k,0}$ but this is only to have a control on the number of
	atoms in the constructed measure. On the other hand the proof of \cite[Theorem 2.1]{Fia15} is technically more demanding. The proof 
	is separated in the pure and the non-pure case.
	In the pure case the basic tool used is the flat extension theorem \cite[Theorems 1.1,1.2]{CF05} which requires constructing a rank preserving extension
	of $M_k(\beta)$ to the moment matrix 
		$$M_{k+1}=
			\left(\begin{array}{cc} M_k(\beta) & B_{k+1} \\ B_{k+1}^T & C_{k+1}\end{array}\right)=
			\left(\begin{array}{cc} M_k(\beta) & B_{k+1} \\ B_{k+1}^T & B^T_{k+1}(M_k(\beta))^{\dagger}B_{k+1}\end{array}\right)	
			.$$ 
	It turns out that there is a two parametric family of possible blocks $B_{k+1}$ and 
	the difficult part is to argue about the existence of a block $B_{k+1}$ such that $B^T_{k+1}(M_k(\beta))^{\dagger}B_{k+1}$
	has a Hankel structure. This requires symbolically inverting $(M_k(\beta))|_{\{\vec{X}^{(0)},Y\vec{X}^{(1)}\}}$ and then comparing specific 	
	entries of $B^T_{k+1}(M_k(\beta))^{\dagger}B_{k+1}$, the approach first used in Fialkow's solution to the TMP on the variety $y=x^3$ \cite{Fia11}. 
	In the remaining non-pure case the author further separates two subcases. In one subcase $M_k$ has the property of being \textit{recursively determinate} \cite{Fia08}
	and is solved in \cite{CF13}, while in the other subcase $M_k$ does not have this property and a flat extension $M_{k+1}$ of $M_k$ is constructed with a less demanding
	analysis as for the pure case above.
	\item 
	Let us comment on the solution of the TMP--2pl in case a sequence $\beta=(\beta_{i+1})_{i,j\in \ZZ_+,i+j\leq 2k-1}$ of degree $2k-1$, $k\geq 2$, is given.
	Assume the notation of Theorem \ref{y2=1}.
	In this case the following statements are equivalent:
	\begin{enumerate}
		\item $\beta$ has a $K$--representing measure.
		\item $(PM_kP^T)|_{\{\vec{X}^{(1)}\}}$ is positive semidefinite, the relations \eqref{moment-relations-2303-1028-v2}
		hold for every $i,j\in \ZZ_+$ such that $i+j\leq 2k-3$ and $(PM_kP^T)|_{\{\vec{X}^{(2)}\},\{X^k\}}\in \cC((PM_kP^T)|_{\{\vec{X}^{(2)}\},\{\vec{X}^{(1)}\}})$.
	\end{enumerate}
	To justify this we need to study when $\beta$ can be extended to the degree $2k$ sequence $\widetilde \beta$ satisfying \eqref{pt3-1803-0930} of Theorem \ref{y2=1}.
	Since the relations \eqref{moment-relations-2303-1028-v2} must hold, all the moments $\widetilde\beta_{2k-j-2,j+2}$ with $0\leq j\leq 2k-2$ are uniquely determined.
	So the only undetermined moments are $\widetilde\beta_{2k,0}$ and $\widetilde\beta_{2k-1,1}$. The question is when can we choose those two moments such that 
	$M$ will by psd and one of the statements in \eqref{pt3-1803-0930} of Theorem \ref{y2=1} holds. The conditions 
	$(PM_kP^T)|_{\{\vec{X}^{(1)}\}}$ is psd and $(PM_kP^T)|_{\{\vec{X}^{(2)}\},\{X^k\}}\in \cC((PM_kP^T)|_{\{\vec{X}^{(2)}\},\{\vec{X}^{(1)}\}})$ 
	ensure that the moment matrix $(PM_kP^T)|_{\{\vec{X}^{(0)}\}}$ is partially psd,
	and hence can be extended to the psd matrix $M$. By decreasing $\widetilde\beta_{2k,0}$ so that $X^k$ becomes linearly dependent of the other columns gives the condition 
	$\Rank M=\Rank N$ which proves the equivalence above.
	\end{enumerate}
\end{remark}

Now we prove Theorem \ref{y2=1}.

\begin{proof}[Proof of Theorem \ref{y2=1}]
	We will prove the following implications: 
		$\eqref{pt3-1803-0930}\Rightarrow \eqref{pt2-1003-1125}\Rightarrow \eqref{pt1-1003-1125}\Rightarrow \eqref{pt3-1003-1125}\Rightarrow \eqref{pt3-1803-0930}.$
	
	First we prove the implication $\eqref{pt3-1803-0930}\Rightarrow \eqref{pt2-1003-1125}$.
	The relations \eqref{moment-relations-2303-1028-v2} imply that $M_k$ has column relations 
	$Y^2X^i=(\alpha_1+\alpha_2)YX^i-\alpha_1\alpha_2 X^i$,
	$i=0,1,\ldots,k-2$. 
	So the column space $\cC(M_k)$ 
	is spanned by the columns in the set
		$\{\vec{X}^{(0)},Y\vec{X}^{(1)}\}$.
	It follows that $M_k$ is of the form 
	\begin{equation}\label{031121-2348}
		P^T\left(\begin{array}{cc} M & MW \\ W^T M  & W^TMW\end{array}\right)P
		=P^T\left(\begin{array}{cc} I & \mbf{0} \\ W^T  & I\end{array}\right)M\left(\begin{array}{cc} I & W \\ \mbf{0}  & I\end{array}\right)P
	\end{equation}
	where $P$ is a permutation matrix such that moment matrix $PM_kP^T$ has rows and columns indexed in the order $\vec{X}^{(0)}, Y\vec{X}^{(1)},Y^2\vec{X}^{(2)},\ldots,Y^k$,
	$W\in \RR^{(2k+1)\times \frac{(k-1)k}{2}}$ is some matrix, while $I$ and $\mbf{0}$ are identity and zero matrices of appropriate sizes, respectively.
	The equality \eqref{031121-2348} first implies that
	\begin{equation}\label{291021-0844}
		\Rank M_k=\Rank M,
	\end{equation}
	and second since $M\succeq 0$, also $M_k\succeq 0$.
	Applying the affine linear transformation 
		$$\phi(x,y)=(x,(\alpha_2-\alpha_1)^{-1}(y-\alpha_1)),$$
	$\phi(M_k)$ has the column relations 
	\begin{equation}\label{301021-2350}
		Y^2X^i=YX^i, \quad i=0,1,\ldots,k-2,
	\end{equation}
	while $\phi(\beta)$ satisfies the equalities
	\begin{align}\label{291021-2326}
	\begin{split}
	\phi(\beta)_{i,0}&=\beta_{i,0}\quad \text{for}\quad i=0,\ldots,2k,\\
	\phi(\beta)_{i,1}&=\frac{1}{\alpha_2-\alpha_1}(\beta_{i,1}-\alpha_1 \beta_{i,0})\quad\text{for}\quad i=0,\ldots,2k-1,\\
	\phi(\beta)_{i,2}
		&=\frac{1}{(\alpha_2-\alpha_1)^2}(\beta_{i,2}-2\alpha_1 \beta_{i,1}-\alpha_1^2\beta_{i,0})\\
		&\underbrace{=}_{\eqref{moment-relations-2303-1028-v2}}\frac{1}{(\alpha_2-\alpha_1)^2}\left(\left((\alpha_1+\alpha_2)\beta_{i,1}-\alpha_1\alpha_2\beta_{i,0}\right)-2\alpha_1 \beta_{i,1}-\alpha_1^2\beta_{i,0}\right)\\	
		&=\frac{1}{\alpha_2-\alpha_1}(\beta_{i,1}-\alpha_1 \beta_{i,0})\quad\text{for}\quad i=0,\ldots,2k-2.
	\end{split}
	\end{align}
	By Proposition \ref{251021-2254}, $\phi(M_k)$ is psd and $\Rank \phi(M_k)=\Rank M_k$.
	By \eqref{291021-2326}, $(P\phi(M_k)P^T)|_{\{\vec{X}^{(0)},Y\vec{X}^{(1)}\}}$, which we denote by $\phi(M)$, is equal to
	$$
		\bordermatrix{
		& \vec{X}^{(1)} & X^k & Y\vec{X}^{(1)} \cr
			(\vec{X}^{(1)})^T & 
				A_{1} & a & \frac{1}{\alpha_2-\alpha_1}(B_1-\alpha_1 A_1) \cr \rule{0pt}{\normalbaselineskip}
			X^k & 
				a^T & \beta_{2k,0} & \frac{1}{\alpha_2-\alpha_1}(b^T-\alpha_1 a^T) \cr \rule{0pt}{\normalbaselineskip}
			(Y\vec{X}^{(1)})^T & \frac{1}{\alpha_2-\alpha_1}(B_1-\alpha_1 A_1)  & 
				\frac{1}{\alpha_2-\alpha_1}(b-\alpha_1 a)& \frac{1}{\alpha_2-\alpha_1}(B_1-\alpha_1 A_1)}.
	$$
	Note that
	\begin{equation}\label{281021-2350}
		\phi(M)=\left(\begin{array}{ccc} I_{k} & \mbf{0} & -\frac{\alpha_1}{\alpha_2-\alpha_1}I_{k} \\ \rule{0pt}{\normalbaselineskip}
			 \mbf{0} & 1 & \mbf{0} \\ \rule{0pt}{\normalbaselineskip} 0 & \mbf{0} & \frac{1}{\alpha_2-\alpha_1}I_{k} \end{array}\right)^T 
	M \left(\begin{array}{ccc} I_{k} & \mbf{0} & -\frac{\alpha_1}{\alpha_2-\alpha_1}I_k \\ \rule{0pt}{\normalbaselineskip}
	 \mbf{0} & 1 & \mbf{0} \\ \rule{0pt}{\normalbaselineskip} 0 & \mbf{0} & \frac{1}{\alpha_2-\alpha_1}I_{k} \end{array}\right).
	\end{equation}
	The equality \eqref{281021-2350} implies that
	\begin{equation}\label{281021-2351}
		\Rank \phi(M)=\Rank M \qquad \text{and}\qquad \phi(M)\succeq 0.
	\end{equation}
	We write 
	\begin{equation}\label{031121-1416}
		D:=\frac{1}{\alpha_2-\alpha_1}(B_1-\alpha_1 A_1)\quad \text{and}\quad d:=\frac{1}{\alpha_2-\alpha_1}(b-\alpha_1 a).
	\end{equation}
	We use the equivalence between \eqref{pt1-281021-2128} and \eqref{pt2-281021-2128} of Theorem \ref{block-psd} for the $2\times 2$ block matrix decomposition
	\begin{equation}\label{031121-1408}
		\bordermatrix{
		& \vec{X}^{(0)} & Y\vec{X}^{(1)} \cr
			(\vec{X}^{(0)})^T & \phi(M)_{11} & \phi(M)_{12}\cr
			(Y\vec{X}^{(1)})^T & \phi(M)_{21} & \phi(M)_{22}}=
		\bordermatrix{
		& \vec{X}^{(0)} & \vr Y\vec{X}^{(1)} \cr
			(\vec{X}^{(0)})^T & \phi(M)_{11} & \VR\begin{array}{c} D\\ d^T \end{array} \cr
			\cline{2-4}
			(Y\vec{X}^{(1)})^T & \begin{array}{cc} D& d \end{array} & \VR D\cr},
	\end{equation}
	of a psd matrix $\phi(M)$, to conclude that
	\begin{equation}\label{cond-2402-838-v2}
		D\succeq 0,\qquad \cC(\left(\begin{array}{cc}  D & d \end{array}\right))\subseteq \cC(D)
		\qquad\text{and}\qquad 
		\phi(M)/\phi(M)_{22}\succeq 0.
	\end{equation}
	Further on, \eqref{cond-2402-838-v2} is equivalent to
	\begin{equation}\label{cond-2402-838}
		D\succeq 0,\qquad d\in\cC(D)
		\qquad\text{and}\qquad 
		\phi(M)/\phi(M)_{22}\succeq 0.
	\end{equation}
	The third condition in \eqref{cond-2402-838} is equivalent to 
	\begin{align}\label{cond-0703-0942}
	\begin{split}
	 	E:=\phi(M)/\phi(M)_{22}
		&= \phi(M)_{11}-\left(\begin{array}{cc} D& d \end{array}\right)D^\dagger \left(\begin{array}{c} D\\ d^T \end{array}\right)\\
		&= \left(\begin{array}{cc} A_1 & a \\ a^T & \beta_{2k,0} \end{array}\right)-
			\left(\begin{array}{cc} DD^\dagger D & DD^\dagger d \\ d^TD^\dagger D & d^TD^\dagger d \end{array}\right)\\
		&=\left(\begin{array}{cc} A_1 & a \\ a^T & \beta_{2k,0} \end{array}\right)-
			\left(\begin{array}{cc} D & d \\ d^T & d^TD^\dagger d \end{array}\right)\\
		&=\left(\begin{array}{cc} A_1-D  & a-d \\ 
				a^T-d^T & \beta_{2k,0}- d^TD^\dagger d \end{array}\right)\succeq 0,
	\end{split}
	\end{align}
	where in the third equality we used the facts that $DD^\dagger D=D$ and $DD^\dagger d=d$, 
	which follows from the solvability of the system $Dx=d$ (see the second condition in \eqref{cond-2402-838}).
	We write 
		$$F:=\left(\begin{array}{cc} D  & d \\ d^T & d^TD^\dagger d \end{array}\right)
		\qquad\text{and}\qquad
		D_1:= \left(\begin{array}{c} D  \\ d^T \end{array}\right).$$
	The first two conditions in \eqref{cond-2402-838} and $F/D=0$ imply, by the equivalence between \eqref{pt1-281021-2128} and \eqref{pt3-281021-2128}
	of Theorem \ref{block-psd}, that $F\succeq 0$.
	Note that 
		$\left(\begin{array}{cc} D & d\end{array}\right)=\left(\phi(\beta)_{i+j,1}\right)_{0\leq i\leq k-1, 0\leq j\leq k}$ 
	and
		$\phi(M)_{11}=\left(\phi(\beta)_{i+j,0}\right)_{0\leq i,j\leq k}=\left(\beta_{i+j,0}\right)_{0\leq i,j\leq k}$
	are Hankel matrices.
	So in the notation \eqref{vector-v}, $F=A_{\widehat \beta^{(1)}}\in S_{k+1}$
	and $\phi(M)_{11}=A_{\widehat \beta^{(2)}}\in S_{k+1}$
	where
	\begin{align*}
	\widehat \beta^{(1)}
	&:=(\phi(\beta)_{0,1},\phi(\beta)_{1,1},\ldots,\phi(\beta)_{2k-1,1},d^TD^\dagger d)
		\in \RR^{2k+1},\\
	\widehat \beta^{(2)}
	&:=(\beta_{0,0},\beta_{1,0},\ldots,\beta_{2k,0})
		\in \RR^{2k+1}.
	\end{align*}
	Finally, 
	$E=A_{\widehat \beta^{(2)}}-A_{\widehat \beta^{(1)}}=
	A_{\widehat \beta^{(2)}-\widehat \beta^{(1)}}$ 
	is a Hankel matrix.
	Further on, using \eqref{prop-2604-1140-eq2} of Proposition \ref{prop-2604-1140}
	for the block decomposition \eqref{031121-1408} of $\phi(M)$,
	it follows that
	\begin{equation}\label{eq-0703-1207}
		\Rank \phi(M)
		=
		\Rank E+\Rank D.
	\end{equation}
	We use the equivalence between \eqref{pt1-281021-2128} and \eqref{pt3-281021-2128} of Theorem \ref{block-psd} for the 
	block decomposition \eqref{cond-0703-0942} of $E$, 
	to conclude that
	\begin{equation}\label{281021-1512}
		\begin{split}
		&A_1-D\succeq 0,\quad a-d\in \cC(A_1-D)\quad \text{and}\\
		&\delta:=(\beta_{2k,0}- d^TD^\dagger d)-(a^T-d^T)\left(A_1-D\right)^\dagger (a-d) \geq 0.
		\end{split}
	\end{equation}
	We separate three cases: 
	\begin{enumerate}[(i)]
		\item\label{case1-0703-1002}  $B_1-\alpha_1 A_1$ is invertible.	
		\item\label{case2-0703-1002}  $\alpha_2 A_1-B_1$ is invertible.
		\item\label{case3-2303-0740} $B_1-\alpha_1 A_1$, $\alpha_2 A_1-B_1$ are singular and $\Rank M=\Rank N$.\\
	\end{enumerate}

	In the case \eqref{case1-0703-1002}, $D$ is also invertible since by \eqref{031121-1416} it is a scalar multiple of $B_1-\alpha_1 A_1$.
	We write  
	\begin{equation}\label{291021-0914}
		\widetilde F:=
		\left(\begin{array}{cc}
		D & d \\ d^T & d^TD^\dagger d+\delta
		\end{array}\right)
		\quad\text{and}\quad
		G:=
		\left(\begin{array}{cc}
		A_1-D & a-d \\[0.5em] a^T-d^T & \beta_{2k,0}-d^TD^\dagger d-\delta
		\end{array}\right).
	\end{equation}
	Since $\widetilde F-F\succeq 0$ and $F\succeq 0$, it follows that 
	$\widetilde F\succeq 0$.
	By	\eqref{021121-1052} of Theorem \ref{block-psd} used for $\cM=\widetilde F$,
	we have that 	
	\begin{equation}\label{ranks-2303-0750}
		\Rank \widetilde F=\left\{\begin{array}{rl} \Rank D,&	\text{if }\delta=0,\\ \Rank D+1,&	\text{if }\delta>0.\end{array}\right.
	\end{equation}
	We use Theorem \ref{block-psd} for $\cM=G$ twice:
	\begin{itemize}
	\item The equivalence between \eqref{pt1-281021-2128} and 	
		\eqref{pt3-281021-2128} together with the first two conditions in
		\eqref{281021-1512} and $G/(A_1-D)=0$ gives us $G\succeq 0$.
	\item \eqref{021121-1052} implies that 
	\begin{equation}\label{031121-1436}	
		\Rank (A_1-D)=\Rank G=\left\{\begin{array}{rl} 
							\Rank E,&	\text{if }\delta=0,\\ 
							\Rank E-1,&	\text{if }\delta>0.\end{array}\right.
	\end{equation}
	\end{itemize}
	Note that $\widetilde F$ and $G$ are Hankel matrices, in the notation \eqref{vector-v} equal to 
	$A_{\widehat \beta^{(3)}}\in S_{k+1}$
	and $A_{\widehat \beta^{(4)}}\in S_{k+1}$, respectively, 
	where 
	$$\widehat \beta^{(3)}=\widehat \beta^{(1)}+\delta e_{2k+1}\in \RR^{2k+1},\quad
		\widehat \beta^{(4)}=\widehat \beta^{(2)}-\widehat \beta^{(1)}-\delta e_{2k+1}\in \RR^{2k+1}$$
	and $e_{2k+1}=(0,\ldots,0,1)\in \RR^{2k+1}.$
	By assumption of the case \eqref{case1-0703-1002}, $A_{\widehat\beta^{(3)}}(k-1)=D\succ 0$, 
	while by \eqref{031121-1436}, $\Rank A_{\widehat\beta^{(4)}}(k-1)=\Rank A_{\widehat\beta^{(4)}}$.
	So $A_{\widehat\beta^{(3)}}$ and $A_{\widehat\beta^{(4)}}$ satisfy \eqref{pt5-v2206} of Theorem \ref{Hamburger}, 
	which implies that $\widehat\beta^{(3)}$ and $\widehat\beta^{(4)}$ have 
	$(\Rank \widetilde F)$--atomic and $(\Rank G)$--atomic representing measures, respectively. 
 	The equalities \eqref{ranks-2303-0750}, \eqref{031121-1436}	 together with \eqref{eq-0703-1207} imply that $\Rank \phi(M)=\Rank \widetilde F+\Rank G$.
	This together with \eqref{281021-2351} and \eqref{291021-0844} provides that $\Rank M_k=\Rank \widetilde F+\Rank G$.
	Note that 
	\begin{equation}\label{291021-0928}
		\phi(M)={\phi(M)}^{(1)}+{\phi(M)}^{(2)},
	\end{equation}
	where
		$$
		{\phi(M)}^{(1)}:=\bordermatrix{
		& \vec{X}^{(0)} & Y\vec{X}^{(1)} \cr
			(\vec{X}^{(0)})^T & \widetilde F  & D_1\cr
			(Y\vec{X}^{(1)})^T & D_1^T & D}
		\quad \text{and}\quad
		{\phi(M)}^{(2)}:=
		\bordermatrix{
		& \vec{X}^{(0)} & Y\vec{X}^{(1)} \cr
			(\vec{X}^{(0)})^T & G  & \mbf{0}\cr
			(Y\vec{X}^{(1)})^T & \mbf{0} & \mbf{0}
		}.
		$$
	By the form \eqref{291021-0914} of $\widetilde F=A_{\widehat \beta^{(3)}}$ and the forms of ${\phi(M)}^{(1)}$, ${\phi(M)}^{(2)}$, it follows that:
	\begin{itemize} 
		\item If $\sum_{i=1}^{\Rank \widetilde F} \rho^{(3)}_i \delta_{x_i^{(3)}}$, 
			where each  $\rho^{(3)}_i>0$ and each $x_i^{(3)}\in \RR$,
			is a representing measure for $\widehat \beta^{(3)}$,
			then 
			$\sum_{i=1}^{\Rank \widetilde F} \rho^{(3)}_i \delta_{(x_i^{(3)},1)}$
			is a representing measure for ${\phi(M)}^{(1)}$.
		\item If $\sum_{j=1}^{\Rank G} \rho^{(4)}_j \delta_{x_j^{(4)}}$,
			where each  $\rho^{(4)}_j>0$ and each $x_j^{(4)}\in \RR$, 
			is a representing measure for $\widehat \beta^{(4)}$,
			then $\sum_{j=1}^{\Rank G} \rho^{(4)}_j \delta_{(x_j^{(4)},0)}$
			is a representing measure for ${\phi(M)}^{(2)}$.
	\end{itemize}
	Using the equality \eqref{291021-0928} we conclude that 
	\begin{equation}\label{291021-1400}
		\sum_{i=1}^{\Rank \widetilde F} \rho^{(3)}_i \delta_{(x_i^{(3)},1)}+
		\sum_{j=1}^{\Rank G} \rho^{(4)}_j \delta_{(x_j^{(4)},0)}
	\end{equation}
	is a representing measure for $\phi(M)$.	
	Now, since all atoms $(x_i^{(3)},1)$ and $(x_j^{(4)},0)$ satisfy the relations $y^2x^i=yx^i$, $i=0,\ldots,k-2$,
	the moment matrix $\widetilde M_k$ corresponding to the measure \eqref{291021-1400}
	has column relations \eqref{301021-2350}. 
	Since $(P\phi(M_k)P^T)|_{\{\vec{X}^{(0)},Y\vec{X}^{(1)}\}}=\phi(M)=(P\widetilde{M}_kP^T)|_{\{\vec{X}^{(0)},Y\vec{X}^{(1)}\}}$ and
	both $\phi(M_k)$, $\widetilde{M}_k$ satisfy the column relations \eqref{301021-2350}, 
	it follows that 
		$$P\phi(M_k)P^T=\left(\begin{array}{cc} \phi(M) & \phi(M)W_1 \\ W_1^T \phi(M)  & W_1^T\phi(M)W_1\end{array}\right)=P\widetilde{M}_kP^T$$ 
	for some matrix $W_1\in \RR^{(2k+1)\times \frac{(k-1)k}{2}}$.
	Hence, \eqref{291021-1400} is a $(\Rank M_k)$--atomic
	representing measure for $\phi(\beta)$. By \eqref{291021-2333} of Proposition \ref{251021-2254},
	$\beta$ also has a $(\Rank M_k)$--atomic representing measure.
	This concludes the proof of the implication $\eqref{pt3-1803-0930}\Rightarrow \eqref{pt2-1003-1125}$ in this case.\\

	In the case \eqref{case2-0703-1002}, 
		$0\prec (\alpha_2-\alpha_1)^{-1}(\alpha_2 A_1-B_1)=A_1-D=A_{\widehat \beta^{(2)}-\widehat \beta^{(1)}}(k-1).$
	Recall from above that $\Rank F=\Rank A_{\widehat\beta^{(1)}}=\Rank A_{\widehat\beta^{(1)}}(k-1)$ and $F\succeq 0$.
	So $A_{\widehat\beta^{(1)}}$ and 
	$A_{\widehat \beta^{(2)}-\widehat \beta^{(1)}}$ satisfy \eqref{pt5-v2206} of Theorem \ref{Hamburger}, 
	which implies that $\widehat \beta^{(1)}$ and $\widehat \beta^{(2)}-\widehat \beta^{(1)}$ have
	$(\Rank A_{\widehat\beta^{(1)}})=(\Rank F)$--atomic and 
	$(\Rank E)=(\Rank A_{\widehat \beta^{(2)}-\widehat \beta^{(1)}})$--atomic representing measures, respectively. 
	Note that 
	\begin{equation}\label{301021-2301}
		\phi(M)={\phi(M)}^{(3)}+{\phi(M)}^{(4)},
	\end{equation}
	where
		$$
		{\phi(M)}^{(3)}:=\bordermatrix{
		& \vec{X}^{(0)} & Y\vec{X}^{(1)} \cr
			(\vec{X}^{(0)})^T &F  & D_1\cr 
			(Y\vec{X}^{(1)})^T & D_1^T & D
		}
		\quad \text{and}\quad
		{\phi(M)}^{(4)}:=
		\bordermatrix{
		& \vec{X}^{(0)} & Y\vec{X}^{(1)} \cr
			(\vec{X}^{(0)})^T & E  & \mbf{0}\cr 
			(Y\vec{X}^{(1)})^T & \mbf{0} & \mbf{0}}.
		$$
	By a similar reasoning as in the case \eqref{case1-0703-1002}
	it follows that if $\sum_{i=1}^{\Rank F} \rho^{(1)}_i \delta_{x_i^{(1)}}$, where each $\rho^{(1)}_i>0$ and each $x_i^{(1)}\in \RR$,
	is a representing measure for $\widehat \beta^{(1)}$ and
	$\sum_{j=1}^{\Rank E} \rho^{(2)}_j \delta_{x_j^{(2)}}$, where each $\rho^{(2)}_j>0$ and each $x_j^{(2)}\in \RR$,
	is a representing measure for $\widehat \beta^{(2)}-\widehat \beta^{(1)}$, then
	$\sum_{i=1}^{\Rank F} \rho^{(1)}_i \delta_{(x_i^{(1)},1)}+\sum_{j=1}^{\Rank E} \rho^{(2)}_j \delta_{(x_j^{(2)},0)},$
	is a representing measure for $\phi(\beta)$. 
	Using \eqref{281021-2351}, \eqref{eq-0703-1207} and $\Rank F=\Rank D$ this measure is $(\Rank M_k)$--atomic.
	By \eqref{291021-2333} of Proposition \ref{251021-2254},
	$\beta$ also has a $(\Rank M_k)$--atomic representing measure.
	This concludes the proof of the implication $\eqref{pt3-1803-0930}\Rightarrow \eqref{pt2-1003-1125}$ in this case.\\

	It remains to study the case \eqref{case3-2303-0740}. In this case $D$ and $A_1-D$ are singular.
	Writing $\phi(N):={(P\phi(M_k)P^T)|}_{\{\vec{X}^{(1)},Y\vec{X}^{(1)}\}}$, note that 
	\begin{equation}\label{021121-0758}
		\phi(N)=\left(\begin{array}{cc} I_{k} & -\frac{\alpha_1}{\alpha_2-\alpha_1}I_k\\\rule{0pt}{\normalbaselineskip}
			 		\mbf{0} & \frac{1}{\alpha_2-\alpha_1}I_k \end{array}\right)^T 
					N
					\left(\begin{array}{cc} I_{k} & -\frac{\alpha_1}{\alpha_2-\alpha_1}I_k\\\rule{0pt}{\normalbaselineskip}
			 		\mbf{0} & \frac{1}{\alpha_2-\alpha_1}I_k \end{array}\right) 
		\end{equation}
	and
	\begin{equation}\label{021121-0800}
		\cC(\phi(N))
			=\cC(\left(\begin{array}{cc}
				A_1 & D\\
				D & D
				\end{array}\right))
			=\cC(\left(\begin{array}{cc}
				A_1-D & D\\
				0 & D
				\end{array}\right)).
	\end{equation}
	The equality \eqref{021121-0758} implies that
	\begin{equation}\label{021121-0801}
		\Rank \phi(N)=\Rank N,
	\end{equation}
	while the equality \eqref{021121-0800}
	that
	\begin{equation}\label{rank2-2303-0815}
		\Rank \phi(N)
			=\Rank (A_1-D)+ \Rank D.
	\end{equation}
	The assumption $\Rank M=\Rank N$ of the case \eqref{case3-2303-0740} together with 
	\eqref{eq-0703-1207}
	and 
	\eqref{rank2-2303-0815}
	leads to	
	\begin{equation}\label{021121-0811}
		\Rank A_{\widehat \beta^{(2)}-\widehat \beta^{(1)}}(k-1)=\Rank (A_1-D)=\Rank E=\Rank A_{\widehat \beta^{(2)}-\widehat \beta^{(1)}}.
	\end{equation}	
	Now we proceed as in the case \eqref{case2-0703-1002}:
	\begin{itemize}
		\item By Theorem \ref{Hamburger}, $\widehat \beta^{(1)}$ and $\widehat \beta^{(2)}-\widehat \beta^{(1)}$ have 
			$(\Rank F)$--atomic and $(\Rank E)$--atomic representing measures, respectively. 
		\item If $\sum_{i=1}^{\Rank F} \rho^{(1)}_i \delta_{x_i^{(1)}}$ and $\sum_{j=1}^{\Rank E} \rho^{(2)}_j \delta_{x_j^{(2)}}$, where each 
			$\rho^{(k)}_i>0$ and each $x_i^{(k)}\in \RR$, are representing measure for $\widehat \beta^{(1)}$ and $\widehat \beta^{(2)}-\widehat \beta^{(1)}$, 
			respectively, then
			$\sum_{i=1}^{\Rank F} \rho^{(1)}_i \delta_{(x_i^{(1)},1)}+\sum_{j=1}^{\Rank E} \rho^{(2)}_j \delta_{(x_j^{(2)},0)},$ 
			is a representing measure for $\phi(\beta)$, which is $(\Rank M_k)$--atomic by \eqref{281021-2351}, \eqref{021121-0801}, \eqref{rank2-2303-0815},
			$\Rank F=\Rank D$ and $\Rank M=\Rank N.$
		\item By \eqref{291021-2333} of Proposition \ref{251021-2254},
			$\beta$ also has a $(\Rank M_k)$--atomic representing measure.
	\end{itemize} 
	This concludes the proof of the implication $\eqref{pt3-1803-0930}\Rightarrow \eqref{pt2-1003-1125}$ in this case.\\

	The implication $\eqref{pt2-1003-1125}\Rightarrow \eqref{pt1-1003-1125}$ is trivial, while the implication $\eqref{pt1-1003-1125}\Rightarrow \eqref{pt3-1003-1125}$ 
	follows from the necessary conditions $M_k$ must satisfy if $\beta$ has a representing measure (see Section \ref{S1}).
	It remains to prove the implication $\eqref{pt3-1003-1125}\Rightarrow \eqref{pt3-1803-0930}$. The column relation \eqref{col-relation-2303-1028}
	and $M_k$ being rg imply that the relations \eqref{moment-relations-2303-1028-v2} must hold.
	If \eqref{pt1-1703-2257} or \eqref{pt2-1703-2257} holds, we are done. Otherwise $B_1-\alpha_1A_1$ and $\alpha_2 A_1-B_1$ are singular
	and we have to prove that $\Rank M=\Rank N$.
	We have that
		$$(\alpha_2-\alpha_1)^{-1}(\alpha_2 A_{1}-B_1)=A_1-D=
		A_{\widehat \beta^{(2)}-\widehat \beta^{(1)}}(k-1).$$
	Since $A_{\widehat \beta^{(2)}-\widehat \beta^{(1)}}(k-1)$ is singular and 
	$A_{\widehat \beta^{(2)}-\widehat \beta^{(1)}}$ is psd, 
	it follows by Corollary \ref{rank-theorem-2} used for 
	$\beta=\widehat \beta^{(2)}-\widehat \beta^{(1)}$
	that 
	$\Rank A_{\widehat \beta^{(2)}-\widehat \beta^{(1)}}(k-1)=
	\Rank A_{\widehat \beta^{(2)}-\widehat \beta^{(1)}}(k-2)$.
	Therefore
	there is a vector 
		$\widetilde v=\left(\begin{array}{cccc}v_0 & \cdots & v_{k-2} \end{array}\right)^T\in \RR^{k-1}$
	such that
	\begin{equation}\label{kernel-2403-828}
		\left(A_{\widehat \beta^{(2)}-\widehat \beta^{(1)}}(k-1)\right)v=
		(\alpha_2 A_1-B_1)v=0,
	\end{equation}
	where 
		$v=\left(\begin{array}{cc} \widetilde v^T & 1 \end{array}\right)^T$.
	By Lemma \ref{extension-principle} used for $\cA=E$,
	it follows that
	$E \left(\begin{array}{cc} v^T & 0 \end{array}\right)^T=0$. In particular,
	\begin{equation}\label{kernel2-2403-828}
		(a-d)^Tv=(\alpha_2-\alpha_1)^{-1}(\alpha_2 a-b)^Tv=0.
	\end{equation}
	Using \eqref{kernel-2403-828} and \eqref{kernel2-2403-828} we get
	\begin{align*}
		M\left(\begin{array}{c} \alpha_2 v \\ 0 \\ -v\end{array}\right)
		&=
		\left(\begin{array}{c} (\alpha_2 A_1-B_1)v \\ (\alpha_2 a-b)^Tv \\ (\alpha_2 B_1-C)v\end{array}\right)
		=
		\left(\begin{array}{c} (\alpha_2 A_1-B_1)v \\ (\alpha_2 a-b)^Tv \\ \alpha_1(\alpha_2 A_1-B_1)v\end{array}\right)
		=
		0,
	\end{align*}
	where in the second equality we used the fact that the column relation \eqref{col-relation-2303-1028} implies that 
	\begin{equation}\label{C-rel-2303-2303}
		C=(\alpha_1+\alpha_2)B_1-\alpha_1\alpha_2A_1.
	\end{equation}
	By Lemma \ref{extension-principle} used for $\cA=M_k$, 
	it follows that $M_k$ satisfies the column relation
	\begin{equation}\label{col-rel-2303-1345}
		\sum_{i=0}^{k-2}(\alpha_2v_i) X^i+\alpha_2 X^{k-1}-\sum_{i=0}^{k-2}v_i YX^i-YX^{k-1}=0.
	\end{equation}
	Similarly, the singularity of $B_1-\alpha_1 A_1$ implies that 
	$D=A_{\widetilde\beta^{(1)}}(k-1)$ is singular.
	Together with $A_{\widetilde\beta^{(1)}}\succeq 0$
	this implies, 
	by Corollary \ref{rank-theorem-2} used for $\beta=\widetilde\beta^{(1)}$,
	that $\Rank A_{\widetilde\beta^{(1)}}(k-1)=
	\Rank A_{\widetilde\beta^{(1)}}(k-2).$
	Therefore
	that there is a vector 
		$u=\left(\begin{array}{cccc}u_0 & \cdots & u_{k-2} \end{array}\right)^T\in \RR^{k-1}$
	such that
	\begin{equation}\label{kernel3-2403-830}
		\left(A_{\widetilde\beta^{(1)}}(k-1)\right)u=(B_1-\alpha_1 A_1)u=0,
	\end{equation}
	where $u=\left(\begin{array}{cc} \widetilde u^T & 1\end{array}\right)^T$.
	By Lemma \ref{extension-principle} used for $\cA=F$, it follows that $F\left(\begin{array}{cc} u^T & 0 \end{array}\right)^T=0$. In particular,
	\begin{equation}\label{kernel4-2403-830}
		(\alpha_2-\alpha_1)^{-1}(b-\alpha_1 a)^Tu=0.
	\end{equation}
	Using \eqref{kernel3-2403-830} and \eqref{kernel4-2403-830} we get
	\begin{align*}
		M\left(\begin{array}{c} -\alpha_1 u \\ 0 \\ u\end{array}\right)
		=
		\left(\begin{array}{c} (B_1-\alpha_1A_1)u \\ (b-\alpha_1 a)^Tu \\ (C-\alpha_1 B_1)u\end{array}\right)
		=
		\left(\begin{array}{c} (B_1-\alpha_1A_1)u \\ (b-\alpha_1 a)^Tu \\ \alpha_2(B_1-\alpha_1A_1)u\end{array}\right)
		=
		0.
	\end{align*}
	where we used \eqref{C-rel-2303-2303} in the second equality.
	By Lemma \ref{extension-principle} used for $\cA=M_k$, it follows that
	$M_k$ satisfies the column relation
	\begin{equation}\label{col-rel-2303-1403}
		\sum_{i=0}^{k-2}(-\alpha_1u_i) X^i-\alpha_1 X^{k-1}+\sum_{i=0}^{k-2}u_i YX^i+YX^{k-1}=0.
	\end{equation}
	Summing up \eqref{col-rel-2303-1345} and \eqref{col-rel-2303-1403} we get the column relation
	\begin{equation}\label{col-rel-2303-2100}
		\sum_{i=0}^{k-2}(\alpha_2v_i-\alpha_1u_i) X^i+(\alpha_2-\alpha_1) X^{k-1}+\sum_{i=0}^{k-2}(u_i-v_i) YX^i=0.
	\end{equation}
	Since $M_k$ is rg, multiplying \eqref{col-rel-2303-2100} with $X$ gives a new column relation
	\begin{equation}\label{col-rel-2303-2101}
		\sum_{i=0}^{k-2}(\alpha_2v_i-\alpha_1u_i) X^{i+1}+(\alpha_2-\alpha_1) X^{k}+\sum_{i=0}^{k-2}(u_i-v_i) YX^{i+1}=0.
	\end{equation}
	Since $\alpha_2-\alpha_1\neq 0$, $X^k\in \cC(\{1,\ldots,X^{k-1},Y,\ldots,YX^{k-1}\}).$
	Hence, $\Rank M=\Rank N$ which proves \eqref{pt3-1703-2319} and concludes the proof of the implication 
	$\eqref{pt3-1003-1125}\Rightarrow \eqref{pt3-1803-0930}$.
\end{proof}

The following is the proof of Corollary \ref{cor-0304-1917}.

\begin{proof}[Proof of Corollary \ref{cor-0304-1917}]
	Let $\phi$ be the linear transformation from the proof of Theorem \ref{y2=1}.
	By \eqref{031121-1416}, \eqref{021121-0801} and \eqref{rank2-2303-0815} we have that
		$\Rank N=\Rank \phi(N)= 
		\Rank (\alpha_2 A_1-B_1) 
		+ 
		\Rank (B_1-\alpha_1 A_1).$
	Under the assumption $\Rank N\in \{2k-1,2k\}$, at least one of $\alpha_2 A_1-B_1$ and $B_1-\alpha_1 A_1$ is of rank $k$ and hence invertible. 
	The corollary follows by applying Theorem \ref{y2=1}.
\end{proof}

The following example demonstrates the use of Theorem \ref{y2=1} and its proof to construct a representing measure supported on the union of two parallel lines for the sequence $\beta$.
The \textit{Mathematica} file with numerical computations for the following example can be found on the link \url{https://github.com/ZalarA/TMP_parallel_lines}.

\begin{example}\label{ex-1103-824}
	Let 
	$\beta
	=\big(\frac{8}{11}, 
	\frac{12}{11}, \frac{4}{11},  
	\frac{28}{11}, \frac{6}{11}, \frac{4}{11},  
	\frac{72}{11},\frac{14}{11}, \frac{6}{11}, \frac{4}{11},
	\frac{196}{11},\frac{36}{11}, \frac{14}{11}, \frac{6}{11},\frac{4}{11},
	\frac{7164}{143},\frac{98}{11}, \frac{36}{11}, \frac{14}{11},\frac{6}{11}, \frac{4}{11}$,
	 $\frac{1331888}{9295},\frac{3582}{143}, \frac{98}{11}, \frac{36}{11},\frac{14}{11}, \frac{6}{11},\frac{4}{11}
	\big)$
	be a bivariate sequence of degree 6.
	We will prove below that $\beta$ has a 6--atomic representing measure on the union of parallel lines $y=0$ and $y=1$.
	Let $P$ be the permutation matrix such that moment matrix $PM_3P^T$ has rows and columns indexed in the order $\vec{X}^{(0)}, Y\vec{X}^{(1)},Y^2\vec{X}^{(2)},Y^3$:
	\begin{equation*}
	PM_3P^T=
	\begin{blockarray}{ccccccccccc}
		& 1&X&X^2 &X^3&Y&YX&YX^2&Y^2&Y^2X&Y^{3}\\[0.5em]
		\begin{block}{c(cccccccccc)}
	 1 &\frac{8}{11} & \frac{12}{11} & \frac{28}{11} & \frac{72}{11} & \frac{4}{11} & \frac{6}{11} & \frac{14}{11} & \frac{4}{11} & \frac{6}{11} & \frac{4}{11}\\[0.5em]
	X&    \frac{12}{11} & \frac{28}{11} & \frac{72}{11} & \frac{196}{11} & \frac{6}{11} & \frac{14}{11} & \frac{36}{11} & \frac{6}{11} & \frac{14}{11} & \frac{6}{11}
	\\[0.5em]
	X^2 &  \frac{28}{11} & \frac{72}{11} & \frac{196}{11} & \frac{7164}{143} & \frac{14}{11} & \frac{36}{11} & \frac{98}{11} & \frac{14}{11} & \frac{36}{11} & \frac{14}{11} \\[0.5em]
	X^3&  \frac{72}{11} & \frac{196}{11} & \frac{7164}{143} & \frac{1331888}{9295} & \frac{36}{11} & \frac{98}{11} & \frac{3582}{143} & \frac{36}{11} & \frac{98}{11} & \frac{36}{11} \\[0.5em]
	Y& \frac{4}{11} & \frac{6}{11} & \frac{14}{11} & \frac{36}{11} & \frac{4}{11} & \frac{6}{11} & \frac{14}{11} & \frac{4}{11} & \frac{6}{11} & \frac{4}{11} \\[0.5em]
	YX&  \frac{6}{11} & \frac{14}{11} & \frac{36}{11} & \frac{98}{11} & \frac{6}{11} & \frac{14}{11} & \frac{36}{11} & \frac{6}{11} & \frac{14}{11} & \frac{6}{11} \\[0.5em]
	YX^2&  \frac{14}{11} & \frac{36}{11} & \frac{98}{11} & \frac{3582}{143} & \frac{14}{11} & \frac{36}{11} & \frac{98}{11} & \frac{14}{11} & \frac{36}{11} & \frac{14}{11} \\[0.5em]
	Y^2&   \frac{4}{11} & \frac{6}{11} & \frac{14}{11} & \frac{36}{11} & \frac{4}{11} & \frac{6}{11} & \frac{14}{11} & \frac{4}{11} & \frac{6}{11} & \frac{4}{11} \\[0.5em]
	Y^2X& \frac{6}{11} & \frac{14}{11} & \frac{36}{11} & \frac{98}{11} & \frac{6}{11} & \frac{14}{11} & \frac{36}{11} & \frac{6}{11} & \frac{14}{11} & \frac{6}{11} \\[0.5em]
	Y^{3}&  \frac{4}{11} & \frac{6}{11} & \frac{14}{11} & \frac{36}{11} & \frac{4}{11} & \frac{6}{11} & \frac{14}{11} & \frac{4}{11} & \frac{6}{11} & \frac{4}{11} \\[0.5em]
	\end{block}
	\end{blockarray}.
	\end{equation*}
	$PM_3P^T$
	is psd with the eigenvalues
		$169.371$, $6.0285$, $1.10273$, $0.289673$, $0.112422$, $0.0231819$, $0$, $0$, $0$, $0$
	and has the column relations
		$$Y^3=Y,\quad Y^2X=YX,\quad Y^2=Y,\quad 
			X^3=\frac{57}{13}\cdot X^2- \frac{283}{65}\cdot X+\frac{12}{65}\cdot 1.$$
	The transformation $\phi$ is the identity, i.e., $\phi(x,y)=(x,y)$.
	The matrices $F=\left(\begin{array}{cc}
		D & d \\ d^T & d^TD^\dagger d
		\end{array}\right)$ and $\phi(M)_{11}$ from the proof of Theorem \ref{y2=1} are equal to 
	$A_{\widehat \beta^{(1)}}\in S_4$ and $A_{\widehat \beta^{(2)}}\in S_4$, respectively, where
		$\widehat \beta^{(1)}
			=\left(\frac{4}{11}, \frac{6}{11}, \frac{14}{11}, \frac{36}{11}, \frac{98}{11},
				\frac{3582}{143}, \frac{665944}{9295}\right)\in \RR^7$
	and
		$\widehat \beta^{(2)}
			=\left(\frac{8}{11}, \frac{12}{11}, \frac{28}{11}, \frac{72}{11}, \frac{196}{11},
				\frac{7164}{143}, \frac{1331888}{9295}\right)\in \RR^7.$
%
	The matrix 
	$\left(P\phi(M_k)P^T\right)|_{\{1,X,X^2\}}-D$
	is positive definite with the eigenvalues $\approx 10.312$, $0.205027$, $0.0284288$ and so the	
	case \eqref{case2-0703-1002} from the proof of Theorem \ref{y2=1} applies.
	A calculation shows that $\widehat \beta^{(2)}=2\widehat \beta^{(1)}$
	and hence
			$A_{\widehat \beta^{(2)}-\widehat \beta^{(1)}}=A_{\widehat \beta^{(1)}}$.
			$A_{\widehat \beta^{(1)}}$ is a psd Hankel matrix satisfying the column relation 
			\begin{equation}\label{col-rel-1603-1455}
				X^3=\frac{57}{13}\cdot X^2-\frac{283}{65}\cdot X+\frac{12}{65}\cdot 1.
			\end{equation}
			By \eqref{031121-2051} of Theorem \ref{Hamburger}, 
			$\widehat \beta^{(1)}$ and $\widehat \beta^{(2)}-\widehat \beta^{(1)}$ have the unique $\RR$--representing measure
			$\rho_1\delta_{x_1}+\rho_2\delta_{x_2}+\rho_3\delta_{x_3}$,
			where 
				$(x_1, x_2, x_3)\approx (0.0445476, 1.17328, 3.53217)$
			are the solutions of the equation \eqref{col-rel-1603-1455},
%
				\begin{align*}
				\left(\begin{array}{ccc} \rho_1^{(1)} & \rho_2^{(1)} & \rho_3^{(1)}\end{array}\right)^T
					&=
					V_{x}^{-1}
					\left(\begin{array}{ccc} \frac{4}{11}& \frac{6}{11} & \frac{14}{11}\end{array}\right)^T
					\approx 
					\left(\begin{array}{ccc} 0.0541354 & 0.233231 & 0.0762695 \end{array}\right)
				\end{align*}
			and $x=(x_1,x_2,x_3)$.
	Thus, $\beta$ has a representing measure on the union of the lines $y=0$ and $y=1$:
		$$\rho_1\delta_{(x_1,1)}+\rho_2\delta_{(x_2,1)}+\rho_3\delta_{(x_3,1)}+
			\rho_1\delta_{(x_1,0)}+\rho_2\delta_{(x_2,0)}+\rho_3\delta_{(x_3,0)}.$$ 
\end{example}

\section{The TMP on the union of three parallel lines}
\label{S4}

Let $k\geq 3$ and  
		$\beta:=\beta^{(2k)}=(\beta_{i,j})_{i,j\in \ZZ_+,i+j\leq 2k}$ 
	be a real bivariate sequence of degree $2k$
	such that $\beta_{0,0}>0$ and let $M_k$ be its associated moment matrix.
	To establish the existence of a representing measure for $\beta$ supported
	on the union of three parallel lines, we can assume, after applying the appropriate affine linear transformation, 
	that the variety is
		$$K=\{(x,y)\in \RR^2\colon (y-\alpha_1)(y-\alpha_2)(y-\alpha_3)=0\},$$
	where $\alpha_1,\alpha_2,\alpha_3\in \RR$ are pairwise distinct nonzero real numbers with $\alpha_1<\alpha_3$ and $\alpha_2<\alpha_3$.
	Hence, $M_k$ must satisfy the column relations 
	\begin{equation}\label{col-rel-1203-1831}
		Y^3X^i=(\alpha_1+\alpha_2+\alpha_3)\cdot Y^2X^i-(\alpha_1\alpha_2+\alpha_1\alpha_3+\alpha_2\alpha_3)\cdot YX^i+\alpha_1\alpha_2\alpha_3\cdot X^i
	\end{equation}
	for $i=0,\ldots,k-3$.
	We write 
		$$\vec{X}^{(i)}:=(1,X,\ldots,X^{k-i})\qquad \text{and}\qquad Y^j\vec{X}^{(i)}:=(Y^j,Y^jX,\ldots,Y^jX^{k-i})$$
	for $i=0,\ldots,k-1$ and $j\in \NN$ such that $j\leq i$.
	In the presence of all column relations \eqref{col-rel-1203-1831}, the column space $\cC(M_k)$ is spanned by the columns in the set $\cT=\{\vec{X}^{(0)},Y\vec{X}^{(1)},Y^2\vec{X}^{(2)}\}$.
	Let $P$ be a permutation matrix such that moment matrix $PM_kP^T$ has rows and columns indexed in the order $\vec{X}^{(0)}, Y\vec{X}^{(1)},Y^2\vec{X}^{(2)},\ldots,Y^k$.
	Let
	\begin{equation}\label{notation-Mk-2402-1651}
		M:=
		{(PM_kP^T)|}_{\cT}=
		\bordermatrix{
		& \vec{X}^{(0)} & Y\vec{X}^{(1)} & Y^2 \vec{X}^{(2)}\cr
			(\vec{X}^{(0)})^T & A_{00} & A_{01} & A_{02} \cr
			( Y\vec{X}^{(1)})^T & (A_{01})^T & A_{11} & A_{12}\cr
			(Y^2 \vec{X}^{(2)})^T & (A_{02})^T & (A_{12})^T & A_{22}}=
		\left(\begin{array}{cc} A_{00} & B \\ B^T & C\end{array}\right),
\end{equation}
	be the restriction of the moment matrix $PM_kP^T$ to the rows and columns in the set $\cT$.
	Applying the affine linear transformation
		$\phi(x,y)=(x,y-\alpha_1)$,
	the moment matrix 
	$\phi(M_k)$ satisfies the column relations 
	\begin{equation}\label{091121-2003}
		Y(Y-\widetilde \alpha_2)(Y-\widetilde\alpha_3)X^i=0 
	\end{equation}
	for $i=0,\ldots,k-3$, where $\widetilde \alpha_2:=\alpha_2-\alpha_1$, $\widetilde \alpha_3:=\alpha_3-\alpha_1$.
	On the level of moments the relations \eqref{091121-2003} mean that
	\begin{equation}\label{171121-1921}
		\widetilde \beta_{i+j,3+\ell}=
			(\widetilde \alpha_2+\widetilde \alpha_3)\widetilde\beta_{i+j,2+\ell}-
			\widetilde \alpha_2\widetilde\alpha_3\widetilde\beta_{i+j,1+\ell}
	\end{equation}
	for every $i,j,\ell\in \NN\cup\{0\}$ such that $i+j+3+\ell\leq 2k$.
	The matrix ${(P\phi(M_k)P^T)|}_{\cT}$, which we denote by $\phi(M)$, is equal to
	\begin{equation}\label{notation-Mk-3103-0727}
		\phi(M)
		=J^TMJ
		:=
\bordermatrix{
		& \vec{X}^{(0)} & Y\vec{X}^{(1)} & Y^2 \vec{X}^{(2)}\cr
			(\vec{X}^{(0)})^T & A_{00} & \widetilde A_{01} & \widetilde A_{02} \cr \rule{0pt}{\normalbaselineskip} 
			( Y\vec{X}^{(1)})^T & (\widetilde A_{01})^T & \widetilde A_{11} & \widetilde  A_{12}\cr \rule{0pt}{\normalbaselineskip} 
			(Y^2 \vec{X}^{(2)})^T & (\widetilde A_{02})^T & (\widetilde A_{12})^T & \widetilde A_{22}
		}=
		\left(\begin{array}{cc} A_{00} & \widetilde B \\\rule{0pt}{0.8\normalbaselineskip}  (\widetilde B)^T & \widetilde C\end{array}\right)\\
	\end{equation}
	where
		$$
		J=
		\left(\begin{array}{c|c|c}
		I_{k+1} & -\alpha_1 \left(\begin{array}{c} I_{k}\\ \mbf{0}_{1\times k} \end{array}\right) &
		 \alpha_1^2 \left(\begin{array}{c}I_{k-1}\\  \mbf{0}_{2\times (k-1)}\end{array}\right)
		\\[2ex]
		\cline{1-3}& \\[-0.8em]
		\mbf{0}_{k\times (k+1)} & I_{k} & -2\alpha_1  \left(\begin{array}{c}I_{k-1}\\
		 \mbf{0}_{1\times (k-1)}\end{array}\right)
		\\[2ex]
		\hline& \\[-0.8em]
		\mbf{0}_{(k-1)\times (k+1)} & \mbf{0}_{(k-1)\times k}  &  I_{k-1}
		\end{array}\right)$$
	and $\mbf{0}_{k_1\times k_2}$ is a zero matrix of size $k_1\times k_2$.
	

	\begin{proposition}\label{171121-2006}
		The matrix ${\phi(M)|}_{\cT\setminus \{\vec{X}^{(0)}\},\cT}=\left(\begin{array}{cc} (\widetilde B)^T & \widetilde C\end{array}\right)$ satisfies the relations
		\begin{equation}\label{171121-2039}
			Y^2X^{i}=
			(\widetilde \alpha_2+\widetilde \alpha_3)YX^{i}-
			\widetilde \alpha_2\widetilde\alpha_3 X^i
		\end{equation}
		for every $i=0,\ldots,k-2$.
	\end{proposition}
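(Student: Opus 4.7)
The plan is to reduce the claimed column identities to the moment identities \eqref{171121-1921} that already encode the column relations \eqref{091121-2003}. Because $\phi(M_k)$ is a moment matrix, the entry in row $Y^bX^a$ and column $Y^jX^i$ of $\phi(M)$ is $\widetilde\beta_{a+i,b+j}$. Therefore the claim that the column relation $Y^2X^i-(\widetilde\alpha_2+\widetilde\alpha_3)YX^i+\widetilde\alpha_2\widetilde\alpha_3 X^i=\mathbf{0}$ holds in row $Y^bX^a$ of $\phi(M)|_{\cT\setminus\{\vec{X}^{(0)}\},\cT}$ is, after cancellation, precisely the scalar identity
\begin{equation*}
\widetilde\beta_{a+i,b+2}=(\widetilde\alpha_2+\widetilde\alpha_3)\,\widetilde\beta_{a+i,b+1}-\widetilde\alpha_2\widetilde\alpha_3\,\widetilde\beta_{a+i,b}.
\end{equation*}

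Next I would match this with \eqref{171121-1921}. Setting $p:=a+i$ for the first index and $\ell:=b-1$ for the shift parameter in \eqref{171121-1921} reproduces exactly the displayed identity, provided that $\ell\ge 0$ (i.e.\ $b\ge 1$) and that the degree bound $a+i+b+2\le 2k$ is satisfied. The restricted row index set $\cT\setminus\{\vec{X}^{(0)}\}=\{Y\vec{X}^{(1)}\}\cup\{Y^2\vec{X}^{(2)}\}$ contains only rows with $b\in\{1,2\}$, so the condition $b\ge 1$ is automatic. For the degree bound, the case $b=1$ carries $0\le a\le k-1$ and the case $b=2$ carries $0\le a\le k-2$; combined with $0\le i\le k-2$ this gives $a+i+b\le 2k-2$ in both cases, so $a+i+b+2\le 2k$ as needed.

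Combining these two observations, each entry-wise identity required to check \eqref{171121-2039} in $\phi(M)|_{\cT\setminus\{\vec{X}^{(0)}\},\cT}$ is a valid instance of \eqref{171121-1921}, which completes the argument. I do not expect any real obstacle here — the proof is essentially bookkeeping inside the Hankel structure — but it is worth noting why the same identities cannot be propagated to the $\vec{X}^{(0)}$-rows: for those rows $b=0$, which would force $\ell=-1$ and fall outside the range of the moment relations derived from \eqref{091121-2003}. Concretely, the analogous identity $\widetilde\beta_{a+i,2}=(\widetilde\alpha_2+\widetilde\alpha_3)\widetilde\beta_{a+i,1}-\widetilde\alpha_2\widetilde\alpha_3\widetilde\beta_{a+i,0}$ is not a consequence of the column relations on $\phi(M_k)$, and this is exactly the asymmetry that motivates stating the proposition only for the restriction to $\cT\setminus\{\vec{X}^{(0)}\}$.
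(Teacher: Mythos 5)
Your argument is correct and is essentially the paper's own proof, which simply observes that the moment relations \eqref{171121-1921} imply the statement; you have just made explicit the index bookkeeping (row $Y^bX^a$, column $Y^jX^i$, entry $\widetilde\beta_{a+i,b+j}$, substitution $\ell=b-1$) and verified the degree bounds. Your closing remark on why the $\vec{X}^{(0)}$-rows must be excluded (they would force $\ell=-1$) correctly identifies the reason the proposition is restricted to $\cT\setminus\{\vec{X}^{(0)}\}$.
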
	
	
	\begin{proof}
	 The relations \eqref{171121-1921} imply the statement of the proposition.
	\end{proof}
	Note that $J$ is invertible and hence
	\begin{equation}\label{161121-1155}
		\Rank \phi(M)=\Rank M.
	\end{equation}
	Further on, we write
	\begin{align*}
		N
		&=
		\bordermatrix{
		& \vec{X}^{(1)} & Y\vec{X}^{(1)} & Y^2 \vec{X}^{(2)} & Y^2X^{k-1}\cr\rule{0pt}{\normalbaselineskip} 
			(\vec{X}^{(1)})^T & \widehat A_{00} & \widehat A_{01} & \widehat A_{02} & c\cr\rule{0pt}{\normalbaselineskip} 
			(Y\vec{X}^{(1)})^T & (\widehat A_{01})^T & \widetilde A_{11} &  \widetilde A_{12} & b \cr\rule{0pt}{\normalbaselineskip} 
			(Y^2\vec{X}^{(2)})^T& (\widehat A_{02})^T & (\widetilde A_{12})^T & \widetilde A_{22} & a\cr\rule{0pt}{\normalbaselineskip} 
			Y^2X^{k-1}& c^T & b^T & a^T & \widetilde\beta_{2k-2,4}		
		}
		\in \RR^{(3k-3)\times (3k-3)},\\
		\widetilde{B}_{00}
		&=(\widetilde\alpha_2\widetilde\alpha_3)^{-1} \cdot \left((\widetilde\alpha_2+\widetilde \alpha_3)\cdot \widehat{A}_{01}-
			\left(\begin{array}{cc}\widehat A_{02} & c\end{array}\right)\right)
			\in \RR^{(k-1)\times (k-1)},\\
		h
		&=(\widetilde\alpha_2\widetilde\alpha_3)^{-1}\cdot \left((\widetilde\alpha_2+\widetilde \alpha_3)\cdot \widehat{a}_{01}-\widetilde a_{02}\right)
			\in \RR^{k-1},
	\end{align*}
	where
	\begin{align*}
	 \widehat A_{00}
		&= {(A_{00})}|_{\{\vec X^{(1)}\}},\quad 
	\widetilde A_{01}
		=\left(\begin{array}{c} \widehat A_{01} \\  \rule{0pt}{0.9\normalbaselineskip}
			 (\widetilde a_{01})^T\end{array}\right)
		=\left(\begin{array}{c} \widehat A_{01} \\   \rule{0pt}{0.9\normalbaselineskip}
			 \begin{array}{cc}(\widehat a_{01})^T & 	 \widetilde \beta_{2k-1,1}\end{array} \end{array}\right),\quad
	\widetilde A_{02}
		=\left(\begin{array}{c} \widehat A_{02} \\ \rule{0pt}{0.9\normalbaselineskip} (\widetilde a_{02})^T\end{array}\right),\\
	a
		&=
		\left(\begin{array}{cccc}  \widetilde \beta_{k-1,4}& \cdots &  \widetilde \beta_{2k-4,4} &  \widetilde \beta_{2k-3,4}\end{array}\right)^T,\qquad
b
		=
		\left(\begin{array}{cccc} \widetilde  \beta_{k-1,3}& \cdots & \widetilde  \beta_{2k-3,3} & \widetilde  \beta_{2k-2,3}\end{array}\right)^T,\\
		c
		&=
		\left(\begin{array}{ccc} \widetilde  \beta_{k-1,2}& \cdots & \widetilde  \beta_{2k-2,2} \end{array}\right)^T
	\end{align*}
	and
	\begin{align}\label{171121-2029}
	\begin{split}
		\widetilde\beta_{2k-3,4}	
			&=(\widetilde\alpha_2+\widetilde\alpha_3)\cdot\widetilde\beta_{2k-3,3}-\widetilde\alpha_2\widetilde\alpha_3\cdot\widetilde\beta_{2k-3,2},\\
		\widetilde\beta_{2k-2,3}	
			&=(\widetilde\alpha_2+\widetilde\alpha_3)\cdot\widetilde\beta_{2k-2,2}-\widetilde\alpha_2\widetilde\alpha_3\cdot\widetilde\beta_{2k-2,1},\\
		\widetilde\beta_{2k-2,4}
			&=(\widetilde\alpha_2+\widetilde\alpha_3)\cdot\widetilde\beta_{2k-2,3}-\widetilde\alpha_2\widetilde\alpha_3\cdot\widetilde\beta_{2k-2,2}.
	\end{split}
	\end{align}
	
	\begin{theorem}\label{y3-2402} 
	Let $K:=\{(x,y)\in \RR^2\colon (y-\alpha_1)(y-\alpha_2)(y-\alpha_3=0\}$, $\alpha_1,\alpha_2,\alpha_3\in \RR$, $\alpha_1<\alpha_3$ and
	$\alpha_2<\alpha_3$,
	be a union of three parallel lines and $\beta:=\beta^{(2k)}=(\beta_{i,j})_{i,j\in \ZZ_+,i+j\leq 2k}$, where $k\geq 3$.
	Assume also the notation above. Then the following statement are equivalent:
	\begin{enumerate}	
		\item\label{pt1-0704-0915} $\beta$ has a $K$--representing measure.
		\item\label{pt2-0704-0915}   $M_k$ is positive semidefinite, the relations
	\begin{equation}\label{moment-relations-2303-1028}
		\beta_{i,j+3}=(\alpha_1+\alpha_2+\alpha_3)\cdot \beta_{i,j+2}-\alpha_1\alpha_2\alpha_3 \cdot \beta_{i,j+1}
	\end{equation}
	hold for every $i,j\in \ZZ_+$ with $i+j\leq 2k-3$,
			$N$ is positive semidefinite and one of the following statements holds:
		\begin{enumerate}
			\item\label{pt2a-0704-0922}  $\widehat A_{01}-\widetilde{\alpha}_2 \widetilde B_{00}$ is invertible.
			\item\label{pt2b-0704-0922} $(\alpha_3-\alpha_1) \widetilde B_{00}-\widehat A_{01}$ is invertible.
			\item\label{cond-2403-1008}  
			Denoting by $v\in \RR^{k-2}$ any vector such that
			\begin{equation}\label{111121-1425}
					((\alpha_3-\alpha_1) \widetilde B_{00}-\widehat A_{01})
					\left(\begin{array}{cc}v^T & 1\end{array}\right)^T=0,
			\end{equation}
			defining real numbers
			\begin{align}\label{eq-t'-0304-1557}
				t'
				&=\frac{(\widehat a_{01})^Tv-\widetilde\alpha_3 h^T v+\widetilde\beta_{2k-1,1}}{\alpha_3-\alpha_1},\\
				u'
				&=
					\left(\begin{array}{ccc} h^T & t' & (\widetilde a_{01})^T  \end{array}\right)
					\left(\begin{array}{cc} 
						\widetilde B_{00} & \widehat A_{01} \\\rule{0pt}{\normalbaselineskip}
						(\widehat A_{01})^T & \widetilde A_{11} 
					\end{array}\right)^\dagger
					\left(\begin{array}{c} h \\ t' \\ \widetilde a_{01}\end{array}\right)\label{eq-u'-0304-1558}	
			\end{align}
			and a Hankel matrix
			\begin{equation}\label{181121-2108}
				A_{\gamma}:=
				A_{00}-
				\left(\begin{array}{c|c} \widetilde B_{00} & \begin{array}{c} h \\ t'\end{array}
							\\[2ex]
					\hline& \\[-0.8em]
					\begin{array}{cc} h^T & t'\end{array} & u' 
				\end{array}\right)\in S_{k+1},
			\end{equation}
			where $\gamma\in\RR^{2k+1},$ it holds that 
			\begin{equation}\label{111121-1729}
				A_{\gamma}\text{ is invertible} \qquad\text{or}\qquad \Rank A_{\gamma}=\Rank A_{\gamma}(k).
			\end{equation}
		\end{enumerate}
	\end{enumerate}
	Moreover, if the $K$--representing measure for $\beta$ exists, then there is a $(\Rank M_k)$--atomic $K$--representing 
	measure if $\Rank N\leq \Rank M_k$
	and $(\Rank M_k+1)$--atomic otherwise.
\end{theorem}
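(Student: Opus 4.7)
The plan is to follow the strategy already articulated in the introduction and successfully executed for the TMP--2pl in Theorem \ref{y2=1}: apply an affine linear transformation so that one of the three lines becomes $y=0$, then study when $\beta$ admits a decomposition $\phi(\beta)=\widetilde\beta+\widehat\beta$ in which $\widetilde\beta$ has a representing measure on $y=0$ (a univariate THMP, via Theorem \ref{Hamburger}) and $\widehat\beta$ has a representing measure on the union of the two remaining horizontal lines $y=\widetilde\alpha_2,\,y=\widetilde\alpha_3$ (a TMP--2pl, via Theorem \ref{y2=1}). The main direction is $\eqref{pt2-0704-0915}\Rightarrow \eqref{pt1-0704-0915}$; the converse comes, as in the two--line case, from standard necessary conditions on $M_k$, together with the fact that $N$ is a restriction of a larger positive moment matrix whose relations \eqref{171121-1921} force the identities \eqref{171121-2029}.

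Concretely, I would first observe that on $\phi(M_k)$ the column relations \eqref{091121-2003} hold, so (by rg) every moment $\widetilde\beta_{i,j}$ with $j\geq 3$ is determined by the $j\in\{0,1,2\}$ moments via \eqref{171121-1921}. Splitting $\phi(\beta)=\widetilde\beta+\widehat\beta$, the constraints $Y=0$ in the $\widetilde\beta$-piece and $(Y-\widetilde\alpha_2)(Y-\widetilde\alpha_3)=0$ in the $\widehat\beta$-piece determine all entries of $M_k(\widetilde\beta)$ and $M_k(\widehat\beta)$ except $\widetilde\beta_{2k-j,0}$, $j=0,1$, which are coupled to $\widehat\beta_{2k-j,0}$ by $\widetilde\beta_{2k-j,0}+\widehat\beta_{2k-j,0}=\phi(\beta)_{2k-j,0}$. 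Thus the free parameters are essentially $(t,u):=(\widehat\beta_{2k-1,0},\widehat\beta_{2k,0})$, and one must determine when they can be chosen so that both pieces are representable. Applying Theorem \ref{y2=1} to $\widehat\beta$, one computes Schur complements of the analogs of $D$ and $F$ in the $\widehat\beta$ moment matrix: the condition that $\widehat\beta$ is representable forces $t=t'$ (formula \eqref{eq-t'-0304-1557}) and $u=u'$ (formula \eqref{eq-u'-0304-1558}), producing a unique candidate $\widehat\beta$. Under (a) or (b) this is the Schur-complement value corresponding to cases \eqref{pt1-1703-2257}--\eqref{pt2-1703-2257} of Theorem \ref{y2=1}; under (c) it is the value forced by the rank-equality case \eqref{pt3-1703-2319}, and here one first uses Lemma \ref{extension-principle} on the kernel vector from \eqref{111121-1425} to produce $t'$ consistent with the partial psd structure.

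Once $t',u'$ are pinned down, the remaining content for $\widetilde\beta$ is captured by the Hankel matrix $A_\gamma$ in \eqref{181121-2108}, which is $A_{00}$ minus precisely the block that $\widehat\beta$ contributes to $(\phi(M))|_{\{\vec{X}^{(0)}\}}$. Then \eqref{111121-1729} is exactly the Hamburger condition \eqref{pt5-v2206} of Theorem \ref{Hamburger} applied to $\gamma$: either $A_\gamma$ is invertible, or the last column lies in the span of the previous ones. Granted this, Theorem \ref{Hamburger} yields a $(\Rank A_\gamma)$--atomic $\{y=0\}$--measure for $\widetilde\beta$, Theorem \ref{y2=1} yields a (minimal) representing measure for $\widehat\beta$, and summing gives a $K$--representing measure for $\phi(\beta)$, which transports back to $\beta$ by Proposition \ref{251021-2254}\eqref{291021-2333}. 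Positive semidefiniteness of $N$ is what guarantees that the $\widehat\beta$-piece is actually psd in all relevant submatrices so that Theorem \ref{y2=1} applies.

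The main obstacle will be the atom count in the final assertion. To get $\Rank M_k$ atoms when $\Rank N\leq \Rank M_k$, and $\Rank M_k+1$ otherwise, I would apply Proposition \ref{prop-2604-1140} to the $2\times 2$ block decomposition $\phi(M)=\bigl(\begin{smallmatrix} A_{00} & \widetilde B\\ \widetilde B^T & \widetilde C\end{smallmatrix}\bigr)$ to get $\Rank \phi(M)=\Rank \widetilde C+\Rank(\phi(M)/\widetilde C)$, and, using that $(\phi(M)/\widetilde C)$ differs from $A_\gamma$ only by whether $t',u'$ are attained, relate $\Rank A_\gamma$ and $\Rank\widetilde C$ to the number of atoms produced by the two sub--problems; the $+1$ case corresponds exactly to the situation where enforcing $u=u'$ forces a strict loss of rank that must be paid for by adding one extra atom to the Hamburger measure for $\widetilde\beta$, analogously to the role of $\delta$ in case \eqref{case1-0703-1002} of the proof of Theorem \ref{y2=1}. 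Tracking these rank identities cleanly, and verifying that the relations \eqref{171121-2039} (Proposition \ref{171121-2006}) guarantee that the candidate decomposition respects the whole block structure of $M_k$ (not just of $M$), will be the main bookkeeping burden.
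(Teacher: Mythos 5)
Your overall architecture matches the paper's: transform so one line becomes $y=0$, decompose $\phi(\beta)=\widetilde\beta+\widehat\beta$ with $\widehat\beta$ handled by Theorem \ref{y2=1} and $\widetilde\beta$ by Theorem \ref{Hamburger}, identify the two free parameters $(t,u)=(\widehat\beta_{2k-1,0},\widehat\beta_{2k,0})$, and read condition \eqref{111121-1729} as the Hamburger criterion for $A_\gamma$. The preliminary reductions you describe are exactly Lemmas \ref{existence-of-a-measure-2403-1104-v2} and \ref{lemma-0704-0926} and Proposition \ref{prop-0304-1601}.

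There is, however, a genuine gap in your sufficiency direction. You assert that ``the condition that $\widehat\beta$ is representable forces $t=t'$ and $u=u'$, producing a unique candidate $\widehat\beta$.'' That is only true in case \eqref{cond-2403-1008}, where both $\widehat A_{01}-\widetilde\alpha_2\widetilde B_{00}$ and $\widetilde\alpha_3\widetilde B_{00}-\widehat A_{01}$ are singular (this is the content of Remark \ref{171121-1959}). In cases \eqref{pt2a-0704-0922} and \eqref{pt2b-0704-0922} there is a whole region of admissible $(t,u)$ for which $F(H(t,u))$ has a $\widetilde{\phi(K)}$--representing measure, and the entire difficulty is to choose $(t,u)$ so that \emph{simultaneously} $A_{00}-H(t,u)$ is psd and satisfies the Hamburger rank condition. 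Your proposal never explains how this simultaneous choice is made, nor why psd-ness of $N$ (which involves the row/column $Y^2X^{k-1}$, absent from $M$) is the right hypothesis to guarantee it. The paper's device for this is the positive semidefinite completion theorem of Dancis \cite[Theorem 2.3]{Dan92} applied to the one-column extension $(\phi(M))(\mathbf{t})$ by $Y^2X^{k-1}$, whose two maximal specified principal submatrices are $\phi(M)$ and $N$: this produces $t_0$ with $(\phi(M))(t_0)\succeq 0$ and $\Rank(\phi(M))(t_0)=\max(\Rank M,\Rank N)$, after which $(t,u)=(\alpha(t_0),u_0)$ is read off from the Schur complement $\widetilde B(t_0)(\widehat C)^\dagger(\widetilde B(t_0))^T$, making $A_{00}-H(\alpha(t_0),u_0)=(\phi(M))(t_0)/\widehat C\succeq 0$ automatic; a further shift by $\delta$ (as in case \eqref{case1-0703-1002} of Theorem \ref{y2=1}) then flattens the Hamburger piece. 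This completion step is also what drives the rank bookkeeping behind the ``$\Rank M_k$ versus $\Rank M_k+1$ atoms'' dichotomy in the moreover part, so without it both the existence argument in cases \eqref{pt2a-0704-0922}--\eqref{pt2b-0704-0922} and the atom count remain unproved.
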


The following corollary states that in the \textit{pure case}, i.e. the only column relations of $M_k$ come from the union of three lines by recursive generation,
the representing measure for $\beta$ supported on these lines exists if and only if $M_k$ is psd, rg and $N$ is psd. 

\begin{corollary}\label{cor-0304-2008}
	Let $K:=\{(x,y)\in \RR^2\colon (y-\alpha_1)(y-\alpha_2)(y-\alpha_3)=0\}$, $\alpha_1,\alpha_2,\alpha_3\in \RR$, $\alpha_1<\alpha_3$ and
	$\alpha_2<\alpha_3$,
	be a union of three parallel lines and $\beta:=\beta^{(2k)}=(\beta_{i,j})_{i,j\in \ZZ_+,i+j\leq 2k}$, where $k\geq 3$.
	Assume that $M_k$ has a column relation $(Y-\alpha_1)(Y-\alpha_2)(Y-\alpha_3)=\mbf{0}$, is recursively generated and 
	$(M_k)|_{\cC^{(2)}}\succ 0$. Assume also the notation of Theorem \ref{y3-2402}. Then $\beta$ has a $K$--representing measure if and only if 
	$N$ is positive semidefinite.	

	Moreover, if $\beta$ has a $K$--representing measure, then a $(\Rank M_k)$--atomic $K$--representing 
	measure exists.
\end{corollary}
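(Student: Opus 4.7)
The plan is to deduce Corollary \ref{cor-0304-2008} from Theorem \ref{y3-2402}. The necessity of $N\succeq 0$ is straightforward: any $K$-representing measure for $\beta$ makes $N$ equal to a Gram-type matrix of evaluation vectors (after using the moment relations \eqref{171121-2029} to define the entries $\widetilde\beta_{2k-3,4}$, $\widetilde\beta_{2k-2,3}$, $\widetilde\beta_{2k-2,4}$), so $N\succeq 0$. For the sufficient direction I check the conditions of Theorem \ref{y3-2402}\eqref{pt2-0704-0915}: the column relation $(Y-\alpha_1)(Y-\alpha_2)(Y-\alpha_3)=\mbf 0$ together with $M_k$ being rg forces $\cC(M_k)\subseteq\Span\cT$, so $M_k$ is congruent to $M=(M_k)|_{\cT}\succ 0$ via an identity analogous to \eqref{031121-2348}, giving $M_k\succeq 0$. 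The moment relations \eqref{moment-relations-2303-1028} follow from the column relation and rg, and $N\succeq 0$ is hypothesized.

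The crucial remaining step is to verify one of (a), (b), (c) of Theorem \ref{y3-2402}\eqref{pt2-0704-0915}. I plan to verify (b). A direct expansion of the definition of $\widetilde B_{00}$ yields
\[
(\alpha_3-\alpha_1)\widetilde B_{00}-\widehat A_{01}=\widetilde\alpha_2^{-1}\bigl(\widetilde\alpha_3\widehat A_{01}-(\widehat A_{02}\mid c)\bigr).
\]
After applying the ALT $\phi$, $\phi(M)=J^TMJ\succ 0$ by invertibility of $J$ (see \eqref{notation-Mk-3103-0727}). Using Proposition \ref{171121-2006}, which expresses the columns of $\phi(M)$ indexed by $Y^2\vec{X}^{(2)}$ as linear combinations of those indexed by $\vec{X}^{(0)}$ and $Y\vec{X}^{(1)}$ when tested against rows in $\cT\setminus\{\vec{X}^{(0)}\}$, the matrix $\widetilde\alpha_3\widehat A_{01}-(\widehat A_{02}\mid c)$ appears, up to an invertible congruence, as a principal submatrix of $\phi(M)$; since $\phi(M)\succ 0$ this submatrix is also positive definite, hence invertible, which establishes (b).

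For the moreover part, in the pure case $\Rank M_k=\Rank M=|\cT|$, and since $N$ has the same size as $M$, $\Rank N\leq\Rank M_k$, so the final clause of Theorem \ref{y3-2402} delivers a $(\Rank M_k)$-atomic $K$-representing measure. The main obstacle is the precise identification of the submatrix in the previous paragraph: this requires careful tracking of row/column indices to match up with the definitions of $\widehat A_{01}$, $\widehat A_{02}$, and $c$ and the correct application of the column relations \eqref{171121-2039}. Once this identification is made, positive definiteness of the relevant block, and hence the full proof, follow routinely from $\phi(M)\succ 0$.
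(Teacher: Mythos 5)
Your reduction to Theorem \ref{y3-2402}, your treatment of the necessity of $N\succeq 0$, and your handling of the moreover part (via $\Rank N\leq \Rank M_k$ when $(M_k)|_{\cC^{(2)}}\succ 0$) all match the paper. The genuine gap is in the key step: condition \eqref{pt2b-0704-0922} does \emph{not} follow from $(M_k)|_{\cC^{(2)}}\succ 0$, so your plan to verify (b) cannot work. Your algebraic identity $(\alpha_3-\alpha_1)\widetilde B_{00}-\widehat A_{01}=\widetilde\alpha_2^{-1}\big(\widetilde\alpha_3\widehat A_{01}-(\widehat A_{02}\mid c)\big)$ is correct, but the matrix on the right is not congruent to a principal submatrix of $\phi(M)$: it is a difference of \emph{off-diagonal} blocks (rows indexed by $\vec{X}^{(1)}$, columns by $Y\vec{X}^{(1)}$ resp.\ $Y^2\vec{X}^{(2)}\cup\{Y^2X^{k-1}\}$, the last of which is not even a column of $\phi(M)$ but of its extension). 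When a $K$--representing measure $\mu=\mu_1+\mu_2+\mu_3$ exists (with $\mu_i$ supported on $y=0$, $y=\widetilde\alpha_2$, $y=\widetilde\alpha_3$ after the ALT), one computes entrywise
\begin{equation*}
\widetilde\alpha_3\widehat A_{01}-(\widehat A_{02}\mid c)=\Big(\int x^{i+j}\,y(\widetilde\alpha_3-y)\,d\mu\Big)_{i,j}=\widetilde\alpha_2(\widetilde\alpha_3-\widetilde\alpha_2)\Big(\int x^{i+j}\,d\mu_2\Big)_{i,j},
\end{equation*}
a localized Hankel matrix whose rank is bounded by the number of atoms of $\mu_2$. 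Taking, say, $k+1$ atoms on $y=0$, only $k-1$ atoms on $y=\widetilde\alpha_2$ and $k$ atoms on $y=\widetilde\alpha_3$ in general position yields $(M_k)|_{\cC^{(2)}}\succ 0$ and $N\succeq 0$ while $(\alpha_3-\alpha_1)\widetilde B_{00}-\widehat A_{01}$ is singular. The underlying error is that positive definiteness of $\phi(M)$ only transfers to \emph{semi}definiteness of a localized matrix whose localizing polynomial $y(\widetilde\alpha_3-y)$ is nonnegative on the support; since that polynomial vanishes on two of the three lines, definiteness is never guaranteed.

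What is true, and what the paper actually proves, is only the disjunction ``\eqref{pt2a-0704-0922} or \eqref{pt2b-0704-0922}''. The paper's route is: under $M\succ 0$ one has $\Rank F(H(\alpha(t_0),u_0))\in\{2k-1,2k\}$, whence $\Rank\,(F(H(\alpha(t_0),u_0)))|_{\{\vec{X}^{(1)}\},\{Y\vec{X}^{(1)}\}}\geq 2k-1$; after the congruence by $\phi$ the column space of this restriction equals that of
\begin{equation*}
\left(\begin{array}{cc}
	c(\widetilde\alpha_3\widetilde{B}_{00}-\widehat{A}_{01}) & c(\widehat{A}_{01}-\widetilde\alpha_2\widetilde B_{00}) \\
	0 & c(\widehat{A}_{01}-\widetilde\alpha_2\widetilde B_{00})
\end{array}\right),\qquad c=(\alpha_3-\alpha_2)^{-1},
\end{equation*}
so at least one of the two blocks must be invertible, and Theorem \ref{y3-2402} applies. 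You would need to replace your verification of (b) by an argument of this kind; as written, the step establishing (b) is false and the proof does not go through.
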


\begin{remark}
	\begin{enumerate}
	\item\label{pt1-091221-2202}
		As already described in Section \ref{S1}, the main idea behind the proof of Theorem \ref{y3-2402} 
	is applying the ALT such that one of the lines becomes $y=0$ and then studying the existence of the decompositions 
	$\beta=\widetilde \beta+\widehat \beta$ such that $\widetilde\beta$, $\widehat \beta$ have representing measures 
	supported on $y=0$ and the union of two other horizontal lines, respectively. It turns out that all the moments of 
	$\widetilde\beta$, $\widehat \beta$ are uniquely determined except $\widetilde\beta_{2k-1,0}$, $\widetilde\beta_{2k,0}$
	$\widehat\beta_{2k-1,0}$, $\widehat\beta_{2k,0}$.
	Before proving Theorem \ref{y3-2402} we establish some preliminary results supporting this idea and
	characterizing the conditions $\widetilde\beta_{2k-1,0}$, $\widetilde\beta_{2k,0}$, $\widehat\beta_{2k-1,0}$, $\widehat\beta_{2k,0}$
	have to satisfy for the existence of the representing measure for $\beta$.
	In the proof of Theorem \ref{y3-2402} we show that these conditions are equivalent to the conditions in \eqref{pt2-0704-0915} of Theorem \ref{y3-2402}
	by using our solution to the TMP--2pl and the THMP.
	Then we establish Corollary \ref{cor-0304-2008} and 
	finally, we give some examples demonstrating the use of Theorem \ref{y3-2402}.
	\item\label{pt2-091221-2318}
		Let us compare our Theorem \ref{y3-2402} with Yoo's results \cite{Yoo17a}.
	In \cite{Yoo17a}, the sextic TMP--3pl is studied, i.e.\ $2k=6$. Since having another column relation in $M_3$ except the 
	one defining the given three parallel lines leads to either the singular quartic TMP solved in \cite{CF02}
	or the sextic TMP already covered by the results in \cite{CY15}, the author focuses on the pure case, when $M_3$ has only one column
	relation defined by the three lines. In the non-pure case he additionally assumes that the variety $\cV(M_3)$ is symmetric about the $y$-axis. 
	In the pure case he also reduces the problem to the study of the decompositions 
	$\beta=\widetilde \beta+\widehat \beta$ described in \eqref{pt1-091221-2202} above and separates the analysis
	to four cases according to the strict positivity or positivity of the determinants of certain submatrices of $M_3(\widetilde \beta)$ and $M_3(\widehat \beta)$.
	In the solutions of two of the cases, the existence of a representing measure is characterized by the solvability of the system of two inequalities in $\widetilde \beta_{5,0}$ and $\widetilde \beta_{6,0}$,
	which are quadratic in $\widetilde \beta_{5,0}$ and linear in $\widetilde \beta_{6,0}$ and hence easily handled. 
	However, our Corollary \ref{cor-0304-2008} above covers all four cases at once by only checking psd and rank conditions
	on two matrices which depend only on $\beta$. Moreover, Corollary \ref{cor-0304-2008} covers pure cases of all even degrees $2k\geq 6$.
	Concerning the non-pure cases, our Theorem  \ref{y3-2402} also covers all even degrees $2k\geq 6$ and we do not need to assume any
	additional condition on the variety $\cV(M_k)$. The conditions in Theorem  \ref{y3-2402} are also explicit in $\beta$ and numerically easy to test,
	since only positive (semi)definiteness and invertibility of certain matrices needs to be checked, a vector from a kernel of certain matrix and a Moore-Penrose pseudoinverse 
	of a certain matrix computed.
	\item 
		In \cite{Yoo17b} the author extends his results from \cite{Yoo17a} described in \eqref{pt2-091221-2318} above from the 
	sextic TMP--3pl to the sextic TMP on a reducible cubic column relation. Also here he focuses on the pure case and 	
	characterizes the existence of a representing measure by the solvability of the system of inequalities in $\widetilde \beta_{5,0}$ and $\widetilde \beta_{6,0}$.
	It would be interesting to study if our approach can be also used for the TMP on a reducible cubic column relation to extend the results from
	\cite{Yoo17b} to any even degree $2k\geq 6$ and also not necessarily pure case.  
	\item
		Similarly as in the case of the TMP--2pl we can ask ourselves what is the solution of the TMP--3pl 
	in case a sequence $\beta=(\beta_{i+1})_{i,j\in \ZZ_+,i+j\leq 2k-1}$ of degree $2k-1$, $k\geq 2$, is given.
	The question is when $\beta$ can be extended to the degree $2k$ sequence $\widetilde \beta$ satisfying the condition \eqref{pt2-0704-0915}
	in Theorem \ref{y3-2402}. Note that the moments $\widetilde\beta_{2k-j-3,j+3}$ with $0\leq j\leq 2k-3$ are uniquely determined by \eqref{moment-relations-2303-1028}.
	So the only undetermined moments are $\widetilde\beta_{2k,0}$, $\widetilde\beta_{2k-1,1}$ and $\widetilde\beta_{2k-1,2}$.
	Characterizing when $M_k$ and $N$ are psd in terms of $\widetilde\beta_{2k,0}$, $\widetilde\beta_{2k-1,1}$ and $\widetilde\beta_{2k-1,2}$ can be done, but since one of the entries of
	$\widetilde B_{00}$ also depends on $\widetilde\beta_{2k-1,2}$, it is not obvious when one of the conditions \eqref{pt2a-0704-0922}, \eqref{pt2b-0704-0922} or \eqref{cond-2403-1008}  
	will be true. This is a possible question for future research.
	\end{enumerate}
\end{remark}
 
	We define a matrix function 
		$$F:\RR^{(k+1)\times (k+1)}\to \RR^{3k\times 3k},\qquad
			F(\mathbf{Z})=\left(\begin{array}{cc}
			\mathbf{Z} 			& \widetilde B 		\\ \rule{0pt}{\normalbaselineskip}
			(\widetilde B)^T 		& \widetilde C 		
		\end{array}\right).$$
	We have that
	\begin{equation*}
		\phi(M)=F(\mathbf{Z})+\left((A_{00}-\mathbf{Z})\oplus \mathbf{0}_{2k-1}\right),
	\end{equation*}
	where $\mathbf{0}_{2k-1}$ represents a $(2k-1)\times (2k-1)$ matrix with only zero entries.
	If $\phi(\beta)$ has a $K$--representing measure $\mu$, then it is supported on the union of parallel lines
	$y=0$, $y=\widetilde \alpha_2$ and $y=\widetilde \alpha_3$.
	Since the moment matrix generated by the measure supported on $y=0$ can be nonzero only
	when restricted to the columns and rows indexed by ${\vec{X}}^{(0)}$, it follows that the restriction of the
	moment matrix generated by $\mu|_{\{y=\widetilde \alpha_2\}\cup \{y=\widetilde \alpha_3\}}$ (resp.\ $\mu|_{\{y=0\}}$)  
	to the columns and rows from $\cT$ 
	is of the form 	$F(B_{00})$ (resp.\ 
	$(A_{00}-B_{00})\oplus \mathbf{0}_{2k-1}$), 
	where $B_{00}\in S_{k+1}$
	is a Hankel matrix.
	This discussion establishes the implication $(\Rightarrow)$ of the following lemma.

	\begin{lemma}	\label{existence-of-a-measure-2403-1104-v2}
		$\beta$ has a $K$--representing measure if and only if there exist a Hankel matrix 
		$B_{00}\in S_{k+1}$,
		such that:
		\begin{enumerate}
			\item  The sequence 
				with the moment matrix $F(B_{00})$ has a $\widetilde{\phi(K)}$--representing measure, where
				\begin{equation}\label{variety-2403-0933}
					\widetilde{\phi(K)}=\{(x,y)\in \RR^2\colon (y-\widetilde\alpha_2)(y-\widetilde\alpha_3)=0\}.
				\end{equation}
			\item The sequence with the moment matrix $A_{00}-B_{00}$ has a $\RR$--representing measure.
		\end{enumerate}
	\end{lemma}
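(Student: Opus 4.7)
Since the forward direction is established in the discussion immediately preceding the lemma, only $(\Leftarrow)$ remains. The strategy is to glue the two given measures into a single measure supported on $\phi(K)$, verify that it represents $\phi(\beta)$, and then transport it back via Proposition \ref{251021-2254}\eqref{291021-2333}.

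The first step is to pick a $\widetilde{\phi(K)}$-representing measure $\mu_1$ for the sequence with moment matrix $F(B_{00})$ and an $\RR$-representing measure $\mu_2 = \sum_i \sigma_i \delta_{t_i}$ for the univariate Hankel sequence corresponding to $A_{00}-B_{00}$, then lift $\mu_2$ to the planar measure $\mu_2' := \sum_i \sigma_i \delta_{(t_i,0)}$ on the line $y=0$ and set $\mu := \mu_1 + \mu_2'$. A routine check using $\int x^s y^t\,d\mu_2' = 0$ for $t \geq 1$ shows that the $\cT$-restriction of the moment matrix of $\mu_2'$ equals $(A_{00}-B_{00}) \oplus \mathbf{0}_{2k-1}$, so combining with $\mu_1$ yields
\begin{equation*}
(PM_k(\mu)P^T)|_\cT = F(B_{00}) + (A_{00}-B_{00}) \oplus \mathbf{0}_{2k-1} = \phi(M),
\end{equation*}
which is exactly the decomposition of $\phi(M)$ recorded just before the lemma.

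The crux is to upgrade this equality on $\cT$ to the full equality $M_k(\mu) = \phi(M_k)$ (after the permutation $P$). Both matrices satisfy the column relations \eqref{091121-2003}: $M_k(\mu)$ because $\mu$ is supported on $\phi(K)$, and $\phi(M_k)$ because the hypothesis \eqref{moment-relations-2303-1028} pushed through $\phi$ gives the recurrences \eqref{171121-1921}. These recurrences inductively express every moment $\widetilde{\beta}_{s,t}$ with $t \geq 3$ as a linear combination of moments with $y$-exponent in $\{1,2\}$, and inspection of the blocks $A_{00}$, $\widetilde A_{01}$, $\widetilde A_{02}$, and $\widetilde A_{11}$ confirms that every moment with $y$-exponent in $\{0,1,2\}$ and total degree at most $2k$ already appears as an entry of $\phi(M)|_\cT$. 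So agreement on $\cT$ forces agreement on every entry, $M_k(\mu)=\phi(M_k)$ follows, $\mu$ is a $\phi(K)$-representing measure for $\phi(\beta)$, and $\phi^{-1}(\mu)$ is the desired $K$-representing measure for $\beta$. The main technical obstacle is the bookkeeping in this propagation step: one must verify that every moment with $y$-exponent at most $2$ really does appear inside $\phi(M)|_\cT$ so that the recurrences close up and leave no moment of degree at most $2k$ undetermined.
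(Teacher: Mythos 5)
Your proof is correct and follows essentially the same route as the paper: glue the lifted measures, match the $\cT$-restrictions, and use the relations coming from $y(y-\widetilde\alpha_2)(y-\widetilde\alpha_3)=0$ to conclude that the full moment matrices agree. The only cosmetic difference is that you propagate agreement entry-by-entry via the moment recurrences \eqref{171121-1921}, whereas the paper phrases the same fact column-wise (both matrices have the form $\left(\begin{smallmatrix} \phi(M) & \phi(M)W \\ W^T\phi(M) & W^T\phi(M)W\end{smallmatrix}\right)$ for the same $W$ determined by the relations); these are equivalent.
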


	\begin{proof}
	The implication $(\Rightarrow)$ follows from the discussion in the paragraph before the
	lemma. It remains to establish the implication $(\Leftarrow)$.
	Let $M_k^{(1)}$ (resp.\ $M_k^{(2)}$) be the moment matrix generated by the measure $\mu_1$ (resp.\ $\mu_2$) supported on $\widetilde{\phi(K)}$ (resp.\ $y=0$)
	such that ${(PM_k^{(1)}P^T)|}_{\cT}=F(B_{00})$ 
	(resp.\ ${(PM_k^{(2)}P^T)|}_{\cT}=(A_{00}-B_{00})\bigoplus \mathbf{0}_{2k-1}$),
	where $P$ is the permutation matrix such that moment matrices $PM_k^{(i)}P^T$, $i=1,2$, have rows and columns indexed in the order $\vec{X}^{(0)}$, $Y\vec{X}^{(1)}$, $Y^2\vec{X}^{(2)}$,$\ldots$, $Y^k$.
	Since all points on the variety $\widetilde{\phi(K)}\cup \{y=0\}$ satisfy the equations $y(y-\widetilde \alpha_2)(y-\widetilde \alpha_1)x^i=0$, $i=0,\ldots,k-3$, the moment matrix
	$\widetilde M_k=M_k^{(1)}+M_k^{(2)}$ corresponding to the measure $\mu_1+\mu_2$ has column relations \eqref{091121-2003}. 
	Since ${(P\phi(M_k)P^T)|}_{\cT}=\phi(M)={(P\widetilde{M}_kP^T)|}_{\cT}$ and
	both $\phi(M_k)$, $\widetilde{M}_k$ satisfy the column relations \eqref{091121-2003}, 
	it follows that 
		$$P\phi(M_k)P^T=\left(\begin{array}{cc} \phi(M) & \phi(M)W \\ W^T \phi(M)  & W^T\phi(M)W\end{array}\right)=P\widetilde{M}_kP^T$$ 
	for some matrix $W\in \RR^{3k\times \frac{(k-2)(k-1)}{2}}$ and hence $\phi(M_k)=\widetilde M_k$. This concludes the proof of the implication $(\Leftarrow)$.
	\end{proof}
		 
	The existence of a $\widetilde{\phi(K)}$--representing measure for $F(B_{00})$ is characterized by 
	Theorem \ref{y2=1}, while the existence of a $\RR$--representing measure for $A_{00}-B_{00}$
	by Theorem \ref{Hamburger}. So it remains to study when there exists $B_{00}$ such that
	$F(B_{00})$ satisfies the condition \eqref{pt3-1803-0930} of Theorem \ref{y2=1} and  
	$A_{00}-B_{00}$ satisfies the condition \eqref{pt5-v2206} of Theorem \ref{Hamburger}.

	We define the matrix function 
	$$H:\RR^2\to \RR^{(k+1)\times(k+1)},\qquad 
		H(\mathbf{t},\mathbf{u})=\left(\begin{array}{c|c} \widetilde B_{00} & \begin{array}{c} h \\ \mathbf{t}\end{array}
						\\[2ex]
						\hline& \\[-0.8em]
						 \begin{array}{cc} h^T & \mathbf{t}\end{array}& \mathbf{u}\end{array}\right)=
						\left(\begin{array}{cc} \widetilde B_{00} & h(\mathbf{t})\\\rule{0pt}{0.9\normalbaselineskip}
						 (h(\mathbf{t}))^T & \mathbf{u}\end{array}\right),$$
	where $h(\mathbf{t})^T=\left(\begin{array}{cc} h^T & \mathbf{t} \end{array}\right)$.
	The following lemma states that there are only two parameters in the matrix $B_{00}$ 
	which are not already determined by $\beta$.

	\begin{lemma}\label{lemma-0704-0926}
	Assume there is a Hankel matrix $B_{00}$ such that the sequence with the moment matrix $F(B_{00})$  admits a 
	$\widetilde{\phi(K)}$--representing measure $\mu_1$.
	Then 
	\begin{equation}\label{111121-1343}
		B_{00}=H(\widetilde \beta_{2k-1,0}(\mu_1),\widetilde \beta_{2k,0}(\mu_1)),
	 \end{equation}
	where $\widetilde \beta_{2k-1,0}(\mu_1)$ and $\widetilde \beta_{2k,0}(\mu_1)$ are the moments of the monomials $x^{2k-1}$ and $x^{2k}$, respectively, with respect to $\mu_1$.
\end{lemma}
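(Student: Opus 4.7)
The plan is to exploit the support condition on $\mu_1$ to see that all but the last two Hankel parameters of $B_{00}$ are forced to take specific values already determined by the fixed blocks $\widetilde B$ and $\widetilde C$ of $F(B_{00})$.

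Since $\mu_1$ is supported on $\widetilde{\phi(K)}=\{y=\widetilde\alpha_2\}\cup\{y=\widetilde\alpha_3\}$, every atom of $\mu_1$ satisfies $y^2=(\widetilde\alpha_2+\widetilde\alpha_3)y-\widetilde\alpha_2\widetilde\alpha_3$. Multiplying this identity by $x^i$ and integrating against $\mu_1$ yields
\begin{equation*}
\widetilde\alpha_2\widetilde\alpha_3\,\widetilde\beta_{i,0}(\mu_1)=(\widetilde\alpha_2+\widetilde\alpha_3)\widetilde\beta_{i,1}(\mu_1)-\widetilde\beta_{i,2}(\mu_1)\quad\text{for every } 0\leq i\leq 2k-2.
\end{equation*}
By the hypothesis $F(B_{00})=(PM_k(\mu_1)P^T)|_{\cT}$, every entry of $\widetilde B$ equals some moment $\widetilde\beta_{i,1}(\mu_1)$ with $0\leq i\leq 2k-1$ or $\widetilde\beta_{i,2}(\mu_1)$ with $0\leq i\leq 2k-2$, so each of these moments is fixed by $\beta$. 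Substituting into the displayed identity, $\widetilde\beta_{i,0}(\mu_1)$ is pinned down by $\beta$ for every $0\leq i\leq 2k-2$.

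Next I would verify by a direct index check that the matrix $\widetilde B_{00}$ and the vector $h$ defined immediately before the statement of Theorem \ref{y3-2402} are precisely the block-level realisations of this recursion: the $(i,j)$-entry of $\widetilde B_{00}$ equals $\widetilde\beta_{i+j,0}(\mu_1)$ for $0\leq i,j\leq k-2$, and the $i$-th entry of $h$ equals $\widetilde\beta_{k+i,0}(\mu_1)$ for $0\leq i\leq k-2$. Together they populate exactly the Hankel antidiagonals of $B_{00}$ indexed by $s=0,1,\ldots,2k-2$. The only Hankel parameters of the $(k+1)\times(k+1)$ matrix $B_{00}$ not determined this way are $\widetilde\beta_{2k-1,0}(\mu_1)$ and $\widetilde\beta_{2k,0}(\mu_1)$, which slot into the positions of $\mathbf{t}$ and $\mathbf{u}$ in the template $H$, giving the claimed identity \eqref{111121-1343}.

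The only nontrivial step is the bookkeeping in that index check, where the block formulas for $\widetilde B_{00}$ and $h$ given in the excerpt must be reconciled entrywise with the displayed recursion; this is routine once the index conventions for $\widehat A_{01}$, $\widehat A_{02}$, $c$, $\widehat a_{01}$ and $\widetilde a_{02}$ are matched, and no genuine obstacle beyond this clerical task is expected.
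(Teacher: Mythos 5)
Your argument is correct and is essentially the paper's own proof: the paper derives the same moment recursion $\widetilde\alpha_2\widetilde\alpha_3\,\widetilde\beta_{i,0}(\mu_1)=(\widetilde\alpha_2+\widetilde\alpha_3)\widetilde\beta_{i,1}(\mu_1)-\widetilde\beta_{i,2}(\mu_1)$ from the column relations $Y^2X^i=(\widetilde\alpha_2+\widetilde\alpha_3)YX^i-\widetilde\alpha_2\widetilde\alpha_3X^i$ of the extended moment matrix of $\mu_1$, and then identifies ${B_{00}|}_{\vec{X}^{(1)}}=\widetilde B_{00}$ and ${B_{00}|}_{\{X^k\},\{\vec{X}^{(2)}\}}=h$, leaving only the two antidiagonals $s=2k-1,2k$ free. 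Your only slip is cosmetic: $\widetilde B_{00}$ occupies the indices $0\leq i,j\leq k-1$ (it is $k\times k$, matching $\widehat A_{01}$), not $0\leq i,j\leq k-2$, but this does not affect the conclusion that antidiagonals $0,\ldots,2k-2$ are covered.
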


\begin{proof}
	Let $M_{k+1}^{(1)}$ be the moment matrix generated by the measure $\mu_1$ and
	 $P$ the permutation matrix such that $PM_k^{(1)}P^T$ has rows and columns indexed in the order $\vec{X}^{(0)}$, $Y\vec{X}^{(1)}$, $Y^2\vec{X}^{(2)}$,$\ldots$, $Y^k$.
	We have that ${(PM_{k+1}^{(1)}P^{T})|}_{\cT}=F(B_{00})$ and ${(PM_{k+1}^{(1)}P^T)|}_{\cT\cup \{Y^2X^{k-1}\}}$ is equal to
	$$
		\kbordermatrix{
		& {\vec{X}}^{(0)} & \vr Y{\vec{X}}^{(1)}  & Y^2 {\vec{X}}^{(2)}  & Y^2X^{k-1}\\
			({\vec{X}}^{(0)})^T & B_{00} & \VR \widetilde A_{01} & \widetilde A_{02} & \begin{array}{c} c \\ \widetilde \beta_{2k-1,2}(\mu_1)\end{array}\\   [0.8em]
			\cline{2-6}\\[-0.8em]
			(Y{\vec{X}}^{(1)})^T & (\widetilde A_{01})^T & \VR \widetilde A_{11} &  \widetilde A_{12} & b \\ [0.8em]
			(Y^2{\vec{X}}^{(2)})^T& (\widetilde A_{02})^T & \VR (\widetilde A_{12})^T & \widetilde A_{22} & a\\ [0.8em]
			Y^2X^{k-1}& \begin{array}{cc} c^T & \widetilde \beta_{2k-1,2}(\mu_1)\end{array} & \VR b^T & a^T & \widetilde \beta_{2k-2,4}
		}.$$
	Since $\mu_1$ is supported on $\widetilde{\phi(K)}$, the matrix $M_{k+1}^{(1)}$ 
	satisfies the column relations 
	\begin{equation}\label{col-rel-0703-2225}
		Y^2X^{i}=( \widetilde \alpha_2+ \widetilde \alpha_3)\cdot YX^{i}- \widetilde \alpha_2 \widetilde \alpha_3 \cdot X^i\qquad \text{for}\quad i=0,\ldots,k-1,
	\end{equation}
	 and hence
	${B_{00}|}_{\vec{X}^{(1)}}=\widetilde B_{00}$ and ${B_{00}|}_{\{X^k\},\{\vec{X}^{(2)}\}}=h$.
	Thus, the equality \eqref{111121-1343} holds.
\end{proof}
	
By Lemmas \ref{existence-of-a-measure-2403-1104-v2} and \ref{lemma-0704-0926}, the existence of a $K$--representing measure for $\beta$ is equivalent to the existence of $t,u\in \RR$ such 
that $F(H(t,u))$ admits a $\widetilde{\phi(K)}$--representing measure and $A_{00}-H(t,u)$ admits a $\RR$--representing measure. 
The following proposition characterizes the existence of a $\widetilde{\phi(K)}$--representing measure for $F(H(t,u))$.
 
\begin{proposition}\label{prop-0304-1601}
	Assume the notation above and $t',u'\in \RR$ are real numbers.
	The sequence with the moment matrix $F(H(t',u'))$ admits a 
	$\widetilde{\phi(K)}$--representing measure if and only if 
	$F(H(t',u'))$ is psd and one of the following statements hold:
	\begin{enumerate}
		\item\label{pt1-0304-1543} 
			At least one of the matrices 
				$\widehat A_{01}-\widetilde\alpha_2\widetilde B_{00}$  or
				$\widetilde\alpha_3\widetilde B_{00}-\widehat A_{01}$
			is invertible.
		\item\label{pt1-2503-1406}  
			We have that $t'$ and $u'$ are of the form \eqref{eq-t'-0304-1557} and \eqref{eq-u'-0304-1558}, respectively.
		\end{enumerate}
	Further on, if $F(H(t',u'))$ admits a 
	$\widetilde{\phi(K)}$--representing measure, then a $(\Rank F(H(t',u'))$--atomic $\widetilde{\phi(K)}$--representing measure exists.
\end{proposition}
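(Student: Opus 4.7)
The plan is to apply Theorem~\ref{y2=1} directly to $F(H(t', u'))$, viewed as the restriction to $\cT$ of a moment matrix for a measure on the union $\widetilde{\phi(K)}=\{y=\widetilde\alpha_2\}\cup\{y=\widetilde\alpha_3\}$. Under the dictionary $\alpha_1\leftrightarrow\widetilde\alpha_2$, $\alpha_2\leftrightarrow\widetilde\alpha_3$, the matrices $M$ and $N$ of Theorem~\ref{y2=1} correspond to the restrictions of $F(H(t', u'))$ to $\{\vec X^{(0)},Y\vec X^{(1)}\}$ and $\{\vec X^{(1)},Y\vec X^{(1)}\}$ respectively, while the roles of the sub-blocks $A_1$ and $B_1$ of that theorem are played by $\widetilde B_{00}$ (together with the extra $t'$-entry) and $\widehat A_{01}$.

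First I would verify that the moment relations \eqref{moment-relations-2303-1028-v2} required by Theorem~\ref{y2=1} hold automatically. They split into two groups: relations among entries with at least one index in $Y\vec X^{(1)}\cup Y^2\vec X^{(2)}$ follow from Proposition~\ref{171121-2006}, while relations whose entries live entirely in the $\vec X^{(0)}$-block of $F(H(t', u'))$ are built into the very definitions of $\widetilde B_{00}$ and $h$. Since the column relations \eqref{171121-2039} allow writing $F(H(t', u'))$ as a congruence of $M$ in the style of \eqref{031121-2348}, positive semidefiniteness of $F(H(t', u'))$ is equivalent to that of $M$, and moreover $\Rank F(H(t',u'))=\Rank M$.

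Conditions \eqref{pt1-1703-2257} and \eqref{pt2-1703-2257} of Theorem~\ref{y2=1} then translate verbatim to the invertibility of $\widehat A_{01}-\widetilde\alpha_2\widetilde B_{00}$ or $\widetilde\alpha_3\widetilde B_{00}-\widehat A_{01}$, giving alternative \eqref{pt1-0304-1543}. The core technical step is to show that condition \eqref{pt3-1703-2319}, namely $\Rank M=\Rank N$, is equivalent to the explicit formulas \eqref{eq-t'-0304-1557} and \eqref{eq-u'-0304-1558} for $t'$ and $u'$. Using Proposition~\ref{prop-2604-1140}, rank equality reduces to a Schur-complement identity with respect to the $\{Y\vec X^{(1)}\}$-block. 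When neither invertibility condition holds, $\widetilde\alpha_3\widetilde B_{00}-\widehat A_{01}$ admits a kernel vector $(v^T,1)^T$ as in \eqref{111121-1425}; Lemma~\ref{extension-principle} propagates this kernel to $M$, and reading the resulting linear equation in the row indexed by $X^k$ pins down $t'$ as in \eqref{eq-t'-0304-1557}. With $t'$ fixed, the remaining content of the rank equality forces the $(X^k,X^k)$-entry of $M$ to equal its projection onto the column span of the $\{\vec X^{(1)},Y\vec X^{(1)}\}$-rows; expressing this projection via the Moore--Penrose pseudoinverse of $\bigl(\begin{smallmatrix}\widetilde B_{00}&\widehat A_{01}\\(\widehat A_{01})^T&\widetilde A_{11}\end{smallmatrix}\bigr)$ yields \eqref{eq-u'-0304-1558}.

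The ``moreover'' bound on the number of atoms follows directly from the analogous statement in Theorem~\ref{y2=1} together with the rank identity noted above. The main obstacle is the kernel-propagation and double Schur-complement computation in the derivation of \eqref{eq-t'-0304-1557} and \eqref{eq-u'-0304-1558}: one must carefully identify which entries of $M$ become constrained by $(v^T,1)^T$ and track the Moore--Penrose arithmetic that produces the quadratic form \eqref{eq-u'-0304-1558}.
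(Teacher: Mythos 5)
Your overall strategy coincides with the paper's: apply Theorem \ref{y2=1} to $F(H(t',u'))$ under the dictionary $A_1\leftrightarrow\widetilde B_{00}$, $B_1\leftrightarrow\widehat A_{01}$, $\alpha_1\leftrightarrow\widetilde\alpha_2$, $\alpha_2\leftrightarrow\widetilde\alpha_3$, translate the two invertibility alternatives verbatim, and convert the remaining rank condition $\Rank M=\Rank N$ into the explicit formulas for $t'$ and $u'$. The treatment of $u'$ via the Schur complement of $\bigl(\begin{smallmatrix}\widetilde B_{00}&\widehat A_{01}\\(\widehat A_{01})^T&\widetilde A_{11}\end{smallmatrix}\bigr)$ and Theorem \ref{block-psd}\eqref{021121-1052} is exactly what the paper does.

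There is, however, a genuine gap in your derivation of \eqref{eq-t'-0304-1557}, at two linked points. First, you assert that singularity of $\widetilde\alpha_3\widetilde B_{00}-\widehat A_{01}$ yields a kernel vector of the form $(v^T,1)^T$, i.e.\ with nonzero \emph{last} coordinate. A singular symmetric matrix need not have such a vector; what is needed is that its rank equals the rank of its leading principal submatrix of one smaller size, and that is obtained from Corollary \ref{rank-theorem-2}, which in turn requires knowing that $\widetilde\alpha_3\widetilde B_{00}-\widehat A_{01}$ is the truncation $A_{\beta}(k-1)$ of a \emph{positive semidefinite Hankel} matrix $A_\beta$ of one larger size. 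Second, you propose to propagate the kernel vector ``to $M$'' via Lemma \ref{extension-principle}, but that lemma applies only to principal submatrices of a psd matrix, and $\widetilde\alpha_3\widetilde B_{00}-\widehat A_{01}$ is a linear combination of blocks of $F(H(t',u'))$, not a principal submatrix of it. The paper resolves both issues simultaneously with the Claim in its proof: it performs a congruence transformation of $F(H(t',u'))$ to a matrix $G(t',u')$, takes the generalized Schur complement $K(t',u')=G(t',u')/G_{22}$, and verifies by direct computation that $(\widetilde\alpha_3-\widetilde\alpha_2)K(t',u')$ is a psd Hankel matrix $\cM^{(1)}$ whose leading $(k-1)\times(k-1)$ corner is $\widetilde\alpha_3\widetilde B_{00}-\widehat A_{01}$ and whose next row involves $\widetilde\alpha_3 h(t')-\widetilde a_{01}$. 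Only then do Corollary \ref{rank-theorem-2} (existence of $(v^T,1)^T$) and Lemma \ref{extension-principle} applied to $\cM^{(1)}$ become available, and the annihilation of $(v^T,1,0)^T$ by $\cM^{(1)}$ is precisely the linear equation that pins down $t'$. You flag the ``double Schur-complement computation'' as an obstacle at the end, but this construction is the substantive content of the step and is missing from your argument.
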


\begin{proof}
	By Theorem \ref{y2=1}, $F(H(t',u'))$ admits a $\widetilde{\phi(K)}$--representing measure if and only if 
	$F(H(t',u'))$ is psd and one of the following statements holds:
	\begin{enumerate}[(i)]
		\item\label{101121-1419} $\widehat A_{01}-\widetilde\alpha_2\widetilde B_{00}$  or $\widetilde\alpha_3\widetilde B_{00}-\widehat A_{01}$ is invertible.
		\item\label{101121-1420}  $\Rank \left(\begin{array}{cc} H(t',u') & \widetilde A_{01}\\ \rule{0pt}{\normalbaselineskip}
					(\widetilde A_{01})^T &\widetilde A_{11}\end{array}\right)=
				\Rank \left(\begin{array}{cc} \widetilde{B}_{00} & \widehat A_{01}\\ \rule{0pt}{\normalbaselineskip}
					(\widehat A_{01})^T &\widetilde A_{11} \end{array}\right).$
	\end{enumerate}
	\eqref{101121-1419} is equivalent to \eqref{pt1-0304-1543}, 
	so it remains to prove that under the assumption that $F(H(t',u'))$ is psd and $\widehat A_{01}-\widetilde\alpha_2\widetilde B_{00}$, $\widetilde\alpha_3\widetilde B_{00}-\widehat A_{01}$
	are singular, \eqref{101121-1420} is equivalent to \eqref{pt1-2503-1406}.\\

	\noindent\textbf{Claim.} There exists $\gamma\in \RR$ such that 
		$$\cM^{(1)}:=\left(\begin{array}{cc} \widetilde\alpha_3\widetilde B_{00}-\widehat A_{01} & \widetilde\alpha_3 h(t')-\widetilde \alpha_{01}\\\rule{0pt}{\normalbaselineskip}
						 (\widetilde \alpha_3h(t')-\widetilde \alpha_{01})^T & \gamma\end{array}\right)$$ 
	is a psd Hankel matrix.\\

	\noindent \textit{Proof of Claim.}
	First we explain that $\cM^{(1)}$ is a Hankel matrix.  We have 
		$$\cM^{(1)}=\underbrace{\left(\begin{array}{cc} \widetilde\alpha_3\widetilde B_{00} & \widetilde\alpha_3 h(t')\\\rule{0pt}{\normalbaselineskip}
						 \widetilde \alpha_3 h(t')^T& \gamma\end{array}\right)}_{\cM^{(1,1)}}
					-
				\underbrace{\left(\begin{array}{cc} \widehat A_{01} &\widetilde \alpha_{01}\\\rule{0pt}{\normalbaselineskip}
						(\widetilde \alpha_{01})^T & 0\end{array}\right)}_{\cM^{(1,2)}}.$$
	Since $\cM^{(1,1)}$ differs from a Hankel matrix $B_{00}$ only in the last two cross--diagonals, it is also Hankel. 
	In the notation \eqref{vector-v}, 
	$\cM^{(1,1)}=A_{\widehat \beta^{(1)}}\in S_{k+1}$ and
	$\cM^{(1,2)}=A_{\widehat \beta^{(2)}}\in S_{k+1}$
	where
	\begin{align*}
		\widehat \beta^{(1)}
		&:=(\widetilde\alpha_3\phi(\beta)_{0,0},\widetilde\alpha_3\phi(\beta)_{1,0},\ldots,\widetilde\alpha_3\phi(\beta)_{2k-2,1},\widetilde\alpha_3 t',\gamma)\in \RR^{2k+1},\\
		\widehat \beta^{(2)}
		&:=(\phi(\beta)_{0,1},\phi(\beta)_{1,1},\ldots,\phi(\beta)_{2k-1,1},0)\in \RR^{2k+1}.
	\end{align*}
 	Thus, $\cM^{(1)}$ is Hankel.

	It remains to prove that $\cM^{(1)}$ is psd for some $\gamma\in \RR$.
	Since $F(H(t',u'))$ is psd, we have that
	\begin{align*}
		G(t',u')
		&:=	\left(\begin{array}{cc}I_k & -\widetilde\alpha_2(\widetilde\alpha_3-\widetilde\alpha_2)^{-1} \cdot I_k \\\rule{0pt}{\normalbaselineskip}
			\mathbf{0} & 
			(\widetilde\alpha_3-\widetilde\alpha_2)^{-1} \cdot I_k
		\end{array}\right)^T
			F(H(t',u'))
			\left(\begin{array}{cc}I_k & -\widetilde\alpha_2(\widetilde\alpha_3-\widetilde\alpha_2)^{-1} \cdot I_k \\\rule{0pt}{\normalbaselineskip}
			\mathbf{0} & 
			(\widetilde\alpha_3-\widetilde\alpha_2)^{-1} \cdot I_k
		\end{array}\right)\\
		&=\left(
			\begin{array}{cc}
				H(t',u') & G_{12}(t') \\\rule{0pt}{\normalbaselineskip}
				 (G_{12}(t'))^T & G_{22}
			\end{array}
			\right)
	\end{align*}
	is psd, where
	\begin{align}\label{def-F12-F22-2403-1003}
	\begin{split}
		G_{22}
		&=(\widetilde\alpha_3-\widetilde\alpha_2)^{-1}(\widehat A_{01}-\widetilde\alpha_2\widetilde B_{00}),\\
		G_{12}(t')
		&=\left(\begin{array}{c}
				G_{22}\\ (g_{12}(t'))^T
				\end{array}
				\right)
		=\left(\begin{array}{c}
				(\widetilde\alpha_3-\widetilde\alpha_2)^{-1}(\widehat A_{01}-\widetilde\alpha_2\widetilde B_{00})\\\rule{0pt}{\normalbaselineskip}
				(\widetilde\alpha_3-\widetilde\alpha_2)^{-1}(\widetilde a_{01}-\widetilde\alpha_2 h(t'))^T
				\end{array}
				\right).
	\end{split}
	\end{align}
	By the equivalence between \eqref{pt1-281021-2128} and \eqref{pt3-281021-2128}
	of Theorem \ref{block-psd} used for $\cM=G(t',u')$, it follows that 
		$K(t',u'):= G(t',u')/G_{22}$ is psd. We have that
	\begin{align*}\label{eq-0204-2106}
	\begin{split}
		K(t',u')
		&= G(t',u')/G_{22}
			=H(t',u')-G_{12}(t')G_{22}^{\dagger}(G_{12}(t'))^T\\[0.5em]
		&=H(t',u')-
			\left(\begin{array}{cc}
				G_{22}G_{22}^\dagger G_{22}& G_{22}G_{22}^\dagger g_{12}(t') \\\rule{0pt}{1.3\normalbaselineskip}
				(g_{12}(t'))^TG_{22}^\dagger G_{22} & (g_{12}(t'))^T G_{22}^{\dagger}g_{12}(t')
			\end{array}
			\right)\\[0.5em]
		&=\left(\begin{array}{cc}
				\widetilde B_{00}-G_{22}& h(t')-G_{22}G_{22}^\dagger g_{12}(t') \\\rule{0pt}{1.3\normalbaselineskip}
				(h(t')-G_{22}G_{22}^\dagger g_{12}(t'))^T & u' - (g_{12}(t'))^T G_{22}^{\dagger}g_{12}(t')
			\end{array}
			\right)\\[0.5em]
		&=\left(\begin{array}{cc}
				(\widetilde\alpha_3-\widetilde\alpha_2)^{-1}(\alpha_3\widetilde B_{00}-\widehat A_{01})& h(t')-G_{22}G_{22}^\dagger g_{12}(t') \\\rule{0pt}{1.3\normalbaselineskip}
				(h(t')-G_{22}G_{22}^\dagger g_{12}(t'))^T & u' - (g_{12}(t'))^T G_{22}^{\dagger}g_{12}(t')\end{array}\right)\\[0.5em]
		&=\left(\begin{array}{cc}
				(\widetilde\alpha_3-\widetilde\alpha_2)^{-1}(\widetilde\alpha_3\widetilde B_{00}-\widehat A_{01}) & (\widetilde\alpha_3-\widetilde\alpha_2)^{-1}(\widetilde\alpha_3h(t')-\widetilde a_{01}) \\\rule{0pt}{1.3\normalbaselineskip}
				(\widetilde\alpha_3-\widetilde\alpha_2)^{-1}(\widetilde\alpha_3h(t')-\widetilde a_{01})^T & u' - (g_{12}(t'))^T G_{22}^{\dagger}g_{12}(t')
			\end{array}
			\right),
		\end{split}
		\end{align*}
	where in the fourth equality we used that $G_{22}G_{22}^\dagger G_{22}=G_{22}$, while in the last equality we used $G_{22}G_{22}^\dagger g_{12}(t')=g_{12}(t')$ 
	(which is true since $g_{12}(t')\in \cC(G_{22})$ by $G(t',u')$ being psd) and \eqref{def-F12-F22-2403-1003}.
	Defining $\gamma:=(\widetilde\alpha_3-\widetilde\alpha_2)(u' - (g_{12}(t'))^T G_{22}^{\dagger}g_{12}(t'))$ we have that $(\widetilde\alpha_3-\widetilde\alpha_2) K(t',u')=\cM^{(1)}$, which proves the claim.
	\hfill $\blacksquare$\\

	Since $\cM^{(1)}=A_{\widehat \beta^{(1)}-\widehat \beta^{(2)}}\in S_{k+1}$ from Claim is psd and by assumption 
	$A_{\widehat \beta^{(1)}-\widehat \beta^{(2)}}(k-1)=\widetilde\alpha_3\widetilde B_{00}-\widehat A_{01}$ is singular, 
	it follows by Corollary \ref{rank-theorem-2} used for 
	$\beta=\widehat \beta^{(1)}-\widehat \beta^{(2)}$
	that 
	$\Rank A_{\widehat \beta^{(1)}-\widehat \beta^{(2)}}(k-1)=
	\Rank A_{\widehat \beta^{(1)}-\widehat \beta^{(2)}}(k-2)$.
	Therefore
	there is a vector 
		$v\in \RR^{k-2}$
	such that \eqref{111121-1425} holds.
	By Lemma \ref{extension-principle} used for $\cA=\cM^{(1)}$, it follows that 
	$\cM^{(1)}\left(\begin{array}{ccc} v^T & 1 & 0 \end{array}\right)^T=0$
	which is equivalent to $t'$ being of the form \eqref{eq-t'-0304-1557}.
	
	It remains to prove that $u'$ is of the form \eqref{eq-u'-0304-1558}.
	We have that
	\begin{align*}
	 &\Rank \left(\begin{array}{cc} H(t',u') & \widetilde A_{01}\\ [0.8em] 
			(\widetilde A_{01})^T&\widetilde A_{11}\end{array}\right)
	=
	\Rank \left(\begin{array}{ccc} \widetilde B_{00} & h(t') & \widehat A_{01} \\ [0.8em]  
			(h(t'))^T & u' & (\widetilde a_{01})^T\\  [0.8em]  
			(\widehat A_{01})^T&\widetilde a_{01} & \widetilde A_{11}\end{array}\right)\\
	=
	&\Rank \underbrace{\left(\begin{array}{cc|c} \widetilde B_{00} & \widehat A_{01} & h(t')\\ [0.8em] 
			(\widehat A_{01})^T&\widetilde A_{11} & \widetilde a_{01}
			\\[0.2ex]
						\hline& \\[-0.8em]
			h(t')^T & (\widetilde a_{01})^T & u' \end{array}\right)}_{\cM^{(2)}}
	=\Rank \left(\begin{array}{cc} \widetilde{B}_{00} & \widehat A_{01}\\ \rule{0pt}{\normalbaselineskip}
					(\widehat A_{01})^T &\widetilde A_{11} \end{array}\right),
	\end{align*}
	where the first equality follows by definitions of $H(t',u')$ and $\widetilde A_{01}$, the second by permuting rows and columns
	and the last by  the assumption \eqref{101121-1420}. 
	Finally, using \eqref{021121-1052} of Theorem \ref{block-psd} for $\cM= \cM^{(2)}$ implies that
	$u'$ is of the form \eqref{eq-u'-0304-1558}.
\end{proof}

\begin{remark}\label{171121-1959}
	 If
		$\widetilde\alpha_3\widetilde B_{00}-\widehat A_{01}$ 
	is singular, then there is at most one $t'\in \RR$ such that $F(H(t',u'))$ is psd for some $u'\in \RR$.
	Indeed, observing the first paragraph after the proof of Claim in the proof of Proposition \ref{prop-0304-1601} we see that
	$v\in \RR^{k-2}$ was an arbitrary vector such that $(\widetilde \alpha_3 \widetilde{B}_{00}-\widehat A_{01})\left(\begin{array}{cc} v^T & 1\end{array}\right)^T=0$.
	If $F(H(t',u'))$ and $F(H(t'',u''))$ are both psd for some $t',t'',u',u''\in \RR$, we see that the equality
	$\left(\begin{array}{cc}\widetilde \alpha_3 h & \widetilde \alpha_3 t'\end{array}\right)\left(\begin{array}{cc} v^T & 1\end{array}\right)^T=
	\left(\begin{array}{cc}\widetilde \alpha_3 h & \widetilde \alpha_3 t''\end{array}\right)\left(\begin{array}{cc} v^T & 1\end{array}\right)^T=0
	$ holds, which implies $t'=t''$.
\end{remark}

Now we are ready to prove Theorem \ref{y3-2402}.

\begin{proof}[Proof of Theorem \ref{y3-2402}]
	First we prove the implication $\eqref{pt1-0704-0915}\Rightarrow\eqref{pt2-0704-0915}$. We denote by $M_{k+1}$ the moment matrix 
	associated to the sequence generated by some $K$--representing measure $\mu$ for $\beta$.
	The following statements hold:
	\begin{itemize}
		\item The matrix $M_k$ is psd (see Section \ref{S1}).
		\item The extension of ${\phi(M)|}_{\cT\setminus \{X^k\}}$ with a row and column $Y^2X^{k-1}$ is equal to the matrix $N$ due to the relation
			$Y(Y-\widetilde \alpha_2)(Y-\widetilde \alpha_3)X^{k-2}$ which is satisfied by the moment matrix $M_{k+1}$. 
		\item The matrix $N$ is psd as the restriction of $M_{k+1}$.
	\end{itemize}
	Using Lemmas \ref{existence-of-a-measure-2403-1104-v2} and \ref{lemma-0704-0926}, 
 	there exist  $t',u'\in \RR$ such that the sequence with the moment matrix $F(H(t',u'))$ 
	admits a $\widetilde{\phi(K)}$--representing measure. 
	If \eqref{pt2a-0704-0922} or \eqref{pt2b-0704-0922} of Theorem \ref{y3-2402} holds, we are done. 
	Otherwise Proposition \ref{prop-0304-1601} implies that $t'$, $u'$ 
	are of the forms \eqref{eq-t'-0304-1557}, \eqref{eq-u'-0304-1558}, respectively. By Lemma \ref{existence-of-a-measure-2403-1104-v2}, $A_{00}-H(t',u')$ admits a representing measure on $\RR$,
	which by Theorem \ref{Hamburger} implies \eqref{111121-1729} and thus \eqref{cond-2403-1008} of Theorem \ref{y3-2402} holds.

	It remains to prove the implication $\eqref{pt1-0704-0915}\Leftarrow\eqref{pt2-0704-0915}$. 
	We denote by $(\phi(M))(\mathbf{t})$ the moment matrix which extends $\phi(M)$ with the additional column and row $Y^2X^{k-1}$:
		\begin{align*}
			(\phi(M))(\mathbf{t})	
			&=
			\kbordermatrix{
			& \vec{X}^{(0)} & \vr Y\vec{X}^{(1)} & Y^2 \vec{X}^{(2)} & Y^2X^{k-1} \\
				(\vec{X}^{(0)})^T & A_{00} & \VR \widetilde A_{01} & \widetilde A_{02} & \begin{array}{c} c \\\mathbf{t} \end{array}\\ [0.8em]
				\cline{2-6}\\[-0.8em]			(Y\vec{X}^{(1)})^T & (\widetilde A_{01})^T & \VR \widetilde A_{11} &  \widetilde A_{12} & b \\ [0.8em]
				(Y^2 \vec{X}^{(2)})^T& (\widetilde A_{02})^T & \VR (\widetilde A_{12})^T & \widetilde A_{22} & a\\[0.8em]
				Y^2X^{k-1}& \begin{array}{cc} c^T & \mathbf{t}\end{array} & \VR b^T & a^T & \widetilde{\beta}_{2k-2,4}
			}
			=
			\left(\begin{array}{c|c}
				A_{00} & \widetilde B(\mathbf{t})\\[0.3em]
					\hline& \\[-0.8em]
				(\widetilde B(\mathbf{t}))^T & \widehat C
			\end{array}\right).
		\end{align*}
	Note that $N$ is equal to the restriction
		${(\phi(M))(\mathbf{t})|}_{\cT\cup\{Y^2X^{k-1}\}\setminus \{X^k\}}.$
	By \cite[Theorem 2.3]{Dan92},
	there exists $t_0\in\RR$ such that $(\phi(M))(t_0)\succeq 0$ and
		$$ 
		\Rank (\phi(M))(t_0)=\max(\Rank \phi(M), \Rank N)=\max(\Rank M, \Rank N),$$
	where we used \eqref{161121-1155} in the second equality.
	Let $\widetilde{\cC^{(2)}}:=(Y\vec{X}^{(1)},Y^2\vec{X}^{(2)},Y^2X^{k-1})$ and 
		$$\cM^{(1)}=\kbordermatrix{& \vec{X}^{(0)} & \widetilde{\cC^{(2)}} \\
				(\vec{X}^{(0)})^T & \widetilde B(t_0)
				({\widehat C})^\dagger
				(\widetilde B(t_0))^T & \widetilde B(t_0)\\ \rule{0pt}{1.2\normalbaselineskip} 
				(\widetilde{\cC^{(2)}})^T & (\widetilde B(t_0))^T & \widehat C
			}.$$
	By the equivalence between \eqref{pt1-281021-2128} and \eqref{pt2-281021-2128} of Theorem \ref{block-psd}
	used for $\cM=\cM^{(1)}$,
	we have that $\cM^{(1)}\succeq 0$.
	By \eqref{prop-2604-1140-eq2} of Proposition \ref{prop-2604-1140} used for $\cM=\cM^{(1)}$, we have that 
	\begin{equation}\label{161121-1403}
		\Rank \cM^{(1)}=\Rank \widehat C.
	\end{equation}

	\noindent \textbf{Claim 1.}
	The matrix $\cM^{(1)}$ satisfies the column relations $\eqref{171121-2039}$ for $i=0,\ldots,k-1$.\\ 

	\noindent \textit{Proof of Claim 1.} 
	Since $\Rank \cM^{(1)}=\Rank \widehat C$, there is a matrix $W\in \RR^{2k\times (k+1)}$ such that
		$$\cM^{(1)}=\left(\begin{array}{cc}
				W^T\widehat CW & W^T\widehat C\\
				\widehat CW & \widehat C
			\end{array}\right).$$
	Morever, if $W'$ is any matrix satisfying $\widehat CW'=(\widetilde B(t_0))^T$, then 
		$$(W')^T\widehat C W'=(W')^T\widehat CW=W^T\widehat CW,$$
	where we used $\widehat CW'=\widehat CW$ in the first equality and $(W')^T\widehat C=W^T \widehat C$ in the second.
	
	Relations \eqref{171121-1921} and definitions \eqref{171121-2029} 
	imply that the restriction 
		$$(\cM^{(1)})|_{\widetilde{\cC^{(2)}},\cC^{(2)}\cup\{Y^2X^{k-1}\}  }=
		\left(\begin{array}{cc}
				(\widetilde B(t_0))^T & \widehat C
			\end{array}\right)$$
	satisfies the relations \eqref{171121-2039} for $i=0,\ldots,k-1$. 
	Thus, there is  $s\in \RR^{2k}$ such that $(\widetilde B(t_0))^T=\widehat CW'$ where
		$$W'=\left(\begin{array}{c|c} \begin{array}{c}
		(\widetilde \alpha_2+ \widetilde \alpha_3) (\widetilde \alpha_2 \widetilde \alpha_3)^{-1}\cdot
		I_{k} \\[0.3em]
				-(\widetilde \alpha_2 \widetilde \alpha_3)^{-1}\cdot I_{k} \end{array}& 
				s \end{array}\right)\in \RR^{2k\times(k+1)}$$
	and finally $W^T\widehat C W=(W')^T\widehat C W'$.
	Hence, $\cM^{(1)}$ also satisfies the relations \eqref{171121-2039}  for $i=0,\ldots,k-1$.\hfill $\blacksquare$\\

	By \eqref{161121-1403}, the column space $\cC(\cM^{(1)})$ is spanned by the columns in the set $\{Y\vec{X}^{(1)},Y^2\vec{X}^{(1)}\}$. 
	Using also Claim 1, the column space $\cC(\cM^{(1)})$ is spanned by the columns in the set $\{\vec{X}^{(1)},Y\vec{X}^{(1)}\}$.
	Therefore 
	\begin{equation}\label{171121-2208}
		\Rank ({(\cM^{(1)})|}_{\{\vec{X}^{(1)},Y\vec{X}^{(1)}\}})=\Rank (\cM^{(1)}).
	\end{equation}
	
	\noindent \textbf{Claim 2.}
	There exists $u_0\in \RR$ such that
	\begin{equation}\label{h(t,gamma)-0903-0808}
		H(\alpha(t_0),u_0)=\widetilde B(t_0)
				(\widehat C)^\dagger
				\widetilde B(t_0)^T,
	\end{equation}
	where $\alpha(t_0)=(\widetilde \alpha_2\widetilde \alpha_3)^{-1}\cdot\left(
		(\widetilde \alpha_2+\widetilde \alpha_3)\widetilde \beta_{2k-1,1}-t_0\right)
		$.\\

	\noindent \textit{Proof of Claim 2.} 
	By the definition of $\alpha(t_0)$ and $H(\mathbf{t},\mathbf{u})$, it follows that $H(\alpha(t_0),\mathbf{u})$ 
	agrees with $\widetilde B(t_0)
				(\widehat C)^\dagger
				(\widetilde B(t_0))^T$
	in the first $k$ columns and rows. So the only remaining entry is the lowest right corner which can be obviously chosen such that
	\eqref{h(t,gamma)-0903-0808} holds. This proves the claim.\hfill $\blacksquare$\\

	We write 
		$$A_{00}=
		\bordermatrix{
		& \vec{X}^{(1)} & X^{k}\cr
		(\vec{X}^{(1)})^T & \widetilde A_{00} & \widetilde a_{00}\cr  \rule{0pt}{0.8\normalbaselineskip} 
			X^{k} & (\widetilde a_{00})^T & \beta_{2k,0}}
		$$
		and define the matrix function
	$$U:\RR^2\to \RR^{(k+1)\times (k+1)},\qquad 
		U(\mathbf{t},\mathbf{u}):=A_{00}-H(\mathbf{t},\mathbf{u})=\left(\begin{array}{cc} 
			\widetilde A_{00} - \widetilde B_{00} & \widetilde a_{00}-h(\mathbf{t}) \\ \rule{0pt}{\normalbaselineskip} 
			(\widetilde a_{00}-h(\mathbf{t}))^T & \beta_{2k,0}- \mathbf{u}
		\end{array}\right).$$
	By the equivalence between \eqref{pt1-281021-2128} and \eqref{pt2-281021-2128} of Theorem \ref{block-psd}
	used for $\cM=(\phi(M))(t_0)$,
	it follows that $A_{00}\succeq \widetilde B(t_0)
				(\widehat C)^\dagger
				\widetilde B(t_0)^T$ or equivalently $U(\alpha(t_0),u_0)\succeq 0$,
	where $\alpha(t_0)$ and $u_0$ are as in Claim 2.
	By the equivalence between \eqref{pt1-281021-2128} and \eqref{pt3-281021-2128} of Theorem \ref{block-psd}
	used for $U(\alpha(t_0),u_0)$, it follows that
	\begin{equation}\label{delta-0903-0820}
		\delta:=(\beta_{2k,0}-u_0)-
		(\widetilde a_{00}-h(\alpha(t_0)))^T(\widetilde A_{00} - \widetilde B_{00})^\dagger (\widetilde a_{00}-h(\alpha(t_0)))\geq 0
	\end{equation}
	and by \eqref{021121-1052} of Theorem \ref{block-psd} used for $\cM=U(\alpha(t_0),u_0)$, we have that
	\begin{equation}\label{rank-l(t0,gamma0)-0903-1000}
		\Rank U(\alpha(t_0),u_0)=
			\left\{\begin{array}{rl} 
				\Rank (\widetilde A_{00} - \widetilde B_{00}),&	\text{if }\delta=0,\\[0.3em]
				\Rank (\widetilde A_{00} - \widetilde B_{00})+1,&	\text{if }\delta>0.
			\end{array} \right.
	\end{equation}
	By \eqref{prop-2604-1140-eq2} of Proposition \ref{prop-2604-1140} used for $\cM=(\phi(M))(t_0)$,
	we have that
	\begin{equation}\label{rank-0903-0940}
			\Rank ((\phi(M))(t_0))=
			\Rank (U(\alpha(t_0),u_0)) +
			\Rank \widehat C.
	\end{equation}	
	We separate two cases:
	\begin{enumerate}
		\item[\textbf{Case 1:}] $\widehat A_{01}-\widetilde\alpha_2\widetilde B_{00}$ or $\widetilde\alpha_3\widetilde B_{00}-\widehat A_{01}$ is invertible.
		\item[\textbf{Case 2:}] $\widehat A_{01}-\widetilde\alpha_2\widetilde B_{00}$ and $\widetilde\alpha_3\widetilde B_{00}-\widehat A_{01}$ are singular.\\
	\end{enumerate}

	\noindent \textbf{Case 1.} 
	By definition of $\delta$ \eqref{delta-0903-0820}, it follows that 
	$U(\alpha(t_0),u_0+\delta)/(\widetilde A_{00} - \widetilde B_{00})=0$.
	We use Theorem \ref{block-psd} for $\cM=U(\alpha(t_0),u_0+\delta)$ twice:
	\begin{itemize}
		\item The equivalence between \eqref{pt1-281021-2128} and \eqref{pt3-281021-2128} implies that
	\begin{equation}\label{rank-2403-947}
		U(\alpha(t_0),u_0+\delta)\succeq 0.
	\end{equation}
		\item \eqref{021121-1052} gives   
	\begin{equation}\label{rank-2403-947-v2}
		\Rank U(\alpha(t_0),u_0+\delta)=\Rank (\widetilde A_{00} - \widetilde B_{00}).
	\end{equation}
	\end{itemize}
	In the notation \eqref{vector-v}, $U(\alpha(t_0),u_0+\delta)=A_{\widehat \beta^{(1)}}\in S_{k+1}$
	for some $\widehat \beta^{(1)}\in \RR^{2k+1}$.
	Using \eqref{rank-2403-947} and \eqref{rank-2403-947-v2} we have that  $A_{\widehat \beta^{(1)}}\succeq 0$ and $\Rank A_{\widehat \beta^{(1)}}(k-1)=\Rank A_{\widehat \beta^{(1)}}$.
	By Theorem \ref{Hamburger} used for $\widehat \beta^{(1)}$, 
	$\widehat \beta^{(1)}$ has a 
		$(\Rank (\widetilde A_{00} - \widetilde B_{00}))$--atomic $\RR$-representing measure.
	We have that
	\begin{equation}\label{rank-f(h(t0,gamma0+delta))-0903-1003} 
		r:=\Rank F(H(\alpha(t_0),u_0+\delta))=
		\left\{\begin{array}{rl} 
			\Rank \widehat C,&		\text{if }\delta=0,\\[0.3em]
			\Rank \widehat C+1,&	\text{if }\delta>0,
		\end{array}\right.
	\end{equation}
	where we used that ${(\cM^{(1)})|}_{\cT}=F(H(\alpha(t_0),u_0)$ and \eqref{171121-2208}.
	By Proposition \ref{prop-0304-1601}, the sequence with the moment matrix $F(H(\alpha(t_0),u_0+\delta))$ admits a $r$--atomic representing measure on the variety 
	$\widetilde{\phi(K)}$ \eqref{variety-2403-0933}.
	The equalities 
	\eqref{rank-l(t0,gamma0)-0903-1000}, \eqref{rank-0903-0940}, 
	\eqref{rank-2403-947} and \eqref{rank-f(h(t0,gamma0+delta))-0903-1003} 
	imply that the sequence $\phi(\beta)$ admits a $(\Rank \phi(M)(t_0))$--atomic representing measure.
	This concludes the proof in this case.\\

	\noindent \textbf{Case 2.} 
	Since $ \widehat A_{01}-\widetilde\alpha_2\widetilde B_{00}$, $\widetilde\alpha_3\widetilde B_{00}- \widehat A_{01}$ are singular
	and $F(\alpha(t_0),u_0)$ is psd, Remark \ref{171121-1959} implies that $\alpha(t_0)$ is equal to $t'$
	defined by \eqref{eq-t'-0304-1557}. 
	The equality \eqref{171121-2208} implies that 
	\begin{align*}
	\Rank \left(\begin{array}{cc} \widetilde B_{00} & \widehat A_{01}\\  \rule{0pt}{\normalbaselineskip} 
			(\widehat A_{01})^T&\widetilde A_{11}\end{array}\right)
			&=
	\Rank \left(\begin{array}{ccc} \widetilde B_{00} & h(t') & \widehat A_{01} \\ \rule{0pt}{\normalbaselineskip} 
			(h(t'))^T & u_0 & (\widetilde a_{01})^T\\ \rule{0pt}{\normalbaselineskip} 
			(\widehat A_{01})^T&\widetilde a_{01} & \widetilde A_{11}\end{array}\right)\\
			&=
	\Rank \underbrace{\left(\begin{array}{cc|c} \widetilde B_{00} & \widehat A_{01} & h(t')\\  \rule{0pt}{\normalbaselineskip} 
			(\widehat A_{01})^T&\widetilde A_{11} & \widetilde a_{01}\\\hline\rule{0pt}{\normalbaselineskip} 
			(h(t'))^T & (\widetilde a_{01})^T & u_0 \end{array}\right)}_{\cM^{(2)}},
	\end{align*}
	where the second equality follows by permuting rows and columns.
	Using \eqref{021121-1052} of Theorem \ref{block-psd} for $\cM= \cM^{(2)}$,
	$u_0$ is equal to $u'$ defined by \eqref{eq-u'-0304-1558}.
	Now by Proposition \ref{prop-0304-1601}, $F(H(t',u'))$ admits a measure.
	Further on, Lemma \ref{existence-of-a-measure-2403-1104-v2} implies that the measure for $\beta$ exists only if also the sequence with 
	the moment matrix $U(t',u')$ has a $\RR$--representing measure.
	Note that $U(t',u')=A_\gamma$ with $A_{\gamma}$ as in \eqref{cond-2403-1008} of Theorem \ref{y3-2402}. 
	By Theorem \ref{Hamburger} used for $\gamma$, 
	$\gamma$ has a 
		$(\Rank U(t',u'))$--atomic $\RR$--representing measure
	iff \eqref{111121-1729} holds, in which case a $(\Rank(U(t',u')))$--atomic representing measure exists.
	Using 
	\eqref{161121-1403}, \eqref{171121-2208} and \eqref{rank-0903-0940}, it follows that the measure for $\phi(\beta)$ is 
	$(\Rank \phi(M)(t'))$--atomic.
\end{proof}

Finally we prove Corolllary \ref{cor-0304-2008}.

\begin{proof}[Proof of Corollary \ref{cor-0304-2008}]
	First note that the moreover part follows immediately from the moreover part in Theorem \ref{y3-2402} by noticing that under the assumption 
	$(M_k)|_{\cC^{(2)}}\succ 0$ it holds that $\Rank N\leq \Rank M_k$.

	It remains to prove the equivalence $(\Leftrightarrow)$ in the corollary. The nontrivial implication is $(\Rightarrow)$.
	Following the proof of the implication $\eqref{pt1-0704-0915}\Leftarrow\eqref{pt2-0704-0915}$ of Theorem \ref{y3-2402} under the assumption that $M$ is positive definite, note that 
	\begin{equation}\label{rank-0304-1954}
		\Rank F(H(\alpha(t_0),u_0))=
				\left\{ 
				\begin{array}{rl} 
					2k-1,&	\text{if }\Rank \widehat C=\Rank C,\\
					2k,&	\text{otherwise}.
				\end{array}\right.
	\end{equation}
	(We used the fact that $\Rank \widehat C\in \{\Rank C,\Rank C+1\}=\{2k-1,2k\}$).
	We have that
		$${(F(H(\alpha(t_0),\gamma_0)))}|_{\{\vec{X}^{(1)}\},\{Y\vec{X}^{(1)}\}}=
		\left(
		\begin{array}{cc}
			\widetilde{B}_{00} & \widehat{A}_{01}\\
			\widehat{A}_{01} &  \widetilde A_{11}
		\end{array}
		\right).$$

	\noindent\textbf{Claim.} $\Rank {(F(H(\alpha(t_0),u_0)))}|_{\{\vec{X}^{(1)}\},\{Y\vec{X}^{(1)}\}}\geq 2k-1$.\\

	\noindent \textit{Proof of Claim.} By \eqref{rank-0304-1954}, we only need to consider two cases:
	\begin{enumerate}
		\item $\Rank F(H(\alpha(t_0),u_0))=2k$: In this case the claim is obvious.
		\item $\Rank F(H(\alpha(t_0),u_0))=2k-1$: In this case the facts that $\Rank C=\Rank \widetilde C$ and that the moment matrix
	$F(H(\alpha(t_0),u_0))$ satisfies the column relations $\eqref{171121-2039}$ for $i=0,\ldots,k-2$, 
	imply that
		$\Rank {(F(H(\alpha(t_0),\gamma_0)))}|_{\{\vec{X}^{(1)}\},\{Y\vec{X}^{(1)}\}}=2k-1.$
	\end{enumerate}
	This proves the claim.\hfill $\blacksquare$\\
	
	Note that	
		$${(\phi(F(H(\alpha(t_0),\gamma_0))))}|_{\{\vec{X}^{(1)}\},\{Y\vec{X}^{(1)}\}}=
		\left(
		\begin{array}{cc}
			\widetilde{B}_{00}& c(\widehat{A}_{01}-\widetilde\alpha_2\widetilde B_{00}) \\ \rule{0pt}{\normalbaselineskip} 
			c(\widehat{A}_{01}-\widetilde\alpha_2\widetilde B_{00}) & c(\widehat{A}_{01}-\widetilde\alpha_2\widetilde B_{00})
		\end{array}
		\right),$$
	where $c=(\alpha_3-\alpha_2)^{-1}$, 
	\begin{equation}\label{rank-eq-0304-1959}
		\Rank {(F(H(\alpha(t_0),\gamma_0)))}|_{\{\vec{X}^{(1)}\},\{Y\vec{X}^{(1)}\}}=\Rank {(\phi(F(H(t_0,\gamma_0))))}|_{\{\vec{X}^{(1)}\},\{Y\vec{X}^{(1)}\}}
	\end{equation}
	and
	\begin{equation}\label{col-space-0304-2000}
		\cC((\phi(F(H(\alpha(t_0),\gamma_0))))|_{\{\vec{X}^{(1)}\},\{Y\vec{X}^{(1)}\}})=
		\cC(\left(
		\begin{array}{cc}
			c(\widetilde\alpha_3\widetilde{B}_{00}-\widehat{A}_{01}) & c(\widehat{A}_{01}-\widetilde\alpha_2\widetilde B_{00}) \\ \rule{0pt}{\normalbaselineskip} 
			0 & c(\widehat{A}_{01}-\widetilde\alpha_2\widetilde B_{00})
		\end{array}
		\right)).
	\end{equation}
	Now, the Claim, \eqref{rank-eq-0304-1959} and \eqref{col-space-0304-2000} imply that at least one of 
		$\widetilde\alpha_3\widetilde{B}_{00}-\widehat{A}_{01}$ and $\widehat{A}_{01}-\widetilde\alpha_2\widetilde B_{00}$
	must be invertible and the statement of the corollary follows by Theorem \ref{y3-2402}.
\end{proof}

The following examples \ref{ex1-0504}--\ref{ex4-0504} demonstrate the use of Theorem \ref{y3-2402} and its proof to either construct a representing
measure supported on the union of three parallel lines for the sequence $\beta$ or show that a representing measure does not exist.
The \textit{Mathematica} file with numerical computations for the following examples can be found on the link \url{https://github.com/ZalarA/TMP_parallel_lines}.
In all examples we assume the notation from Theorem \ref{y3-2402} and its proof. Further on, $P$ will 
be the permutation matrix such that moment matrix $PM_3P^T$ has rows and columns indexed in the order $\vec{X}^{(0)}, Y\vec{X}^{(1)},Y^2\vec{X}^{(2)},Y^3$.

\begin{example}\label{ex1-0504}
Let 
	$\beta
	=\big(1, 
	\frac{3}{2}, 0,  
	\frac{7}{2}, 0, \frac{2}{3},  
	9,0, 1, 0,
	\frac{49}{2},0, \frac{7}{3}, 0,\frac{2}{3},
	69,0, \frac{191}{32}, 0,1, 0,
	 \frac{397}{2},0, \frac{49}{3}, 0,\frac{7}{3}, 0,\frac{2}{3}
	\big)$
	be a bivariate sequence of degree 6. 
	We will prove below that $\beta$ does not have a $\RR^2$--representing measure. 
	The moment matrix $PM_3P^T$ is equal to:
	\begin{equation*}
	PM_3P^T=
	\kbordermatrix{
		& 1&X&X^2 &X^3&Y&YX&YX^2&Y^2&Y^2X&Y^{3}\\[0.5em]
	1& 1 & \frac{3}{2} & \frac{7}{2} & 9 & 0 & 0 & 0 & \frac{2}{3} & 1 & 0 \\[0.5em]
	X& \frac{3}{2} & \frac{7}{2} & 9 & \frac{49}{2} & 0 & 0 & 0 & 1 & \frac{7}{3} & 0 \\[0.5em]
	X^2 & \frac{7}{2} & 9 & \frac{49}{2} & 69 & 0 & 0 & 0 & \frac{7}{3} & \frac{191}{32} & 0 \\[0.5em]
	X^3& 9 & \frac{49}{2} & 69 & \frac{397}{2} & 0 & 0 & 0 & \frac{191}{32} & \frac{49}{3} & 0 \\[0.5em]
	 Y&0 & 0 & 0 & 0 & \frac{2}{3} & 1 & \frac{7}{3} & 0 & 0 & \frac{2}{3} \\[0.5em]
	YX& 0 & 0 & 0 & 0 & 1 & \frac{7}{3} & \frac{191}{32} & 0 & 0 & 1 \\[0.5em]
	 YX^2&0 & 0 & 0 & 0 & \frac{7}{3} & \frac{191}{32} & \frac{49}{3} & 0 & 0 & \frac{7}{3} \\[0.5em]
	Y^2& \frac{2}{3} & 1 & \frac{7}{3} & \frac{191}{32} & 0 & 0 & 0 & \frac{2}{3} & 1 & 0 \\[0.5em]
	 Y^2X&1 & \frac{7}{3} & \frac{191}{32} & \frac{49}{3} & 0 & 0 & 0 & 1 & \frac{7}{3} & 0 \\[0.5em]
	Y^{3}& 0 & 0 & 0 & 0 & \frac{2}{3} & 1 & \frac{7}{3} & 0 & 0 & \frac{2}{3}
	}.
	\end{equation*}
	$PM_3P^T$ is psd with the eigenvalues
		$227.591$, $19.2501$, $1.91716$, $0.677211$, $0.648649$, $0.283553$, $0.0727021$, $0.0539334$, $0.00565968$, $0$
	and satisfies the column relation $Y^3=Y$. 
	The transformation $\phi$ is the identity, i.e., $\phi(x,y)=(x,y)$.
	The matrix $N$ is equal to
	$$\kbordermatrix{
		& 1&X&X^2 &Y&YX&YX^2&Y^2&Y^2X&Y^{2}X^2\\[0.5em]
	1& 1 & \frac{3}{2} & \frac{7}{2} & 0 & 0 & 0 & \frac{2}{3} & 1 & \frac{7}{3}\\[0.5em]
	X& \frac{3}{2} & \frac{7}{2} & 9 &  0 & 0 & 0 & 1 & \frac{7}{3} & \frac{191}{32} \\[0.5em]
	X^2 & \frac{7}{2} & 9 & \frac{49}{2} &  0 & 0 & 0 & \frac{7}{3} & \frac{191}{32} & \frac{49}{3} \\[0.5em]
	 Y&0 & 0 & 0 &  \frac{2}{3} & 1 & \frac{7}{3} & 0 & 0 & 0 \\[0.5em]
	YX& 0 & 0 & 0 &  1 & \frac{7}{3} & \frac{191}{32} & 0 & 0 & 0 \\[0.5em]
	 YX^2&0 & 0 & 0 &  \frac{7}{3} & \frac{191}{32} & \frac{49}{3} & 0 & 0 & 0 \\[0.5em]
	Y^2& \frac{2}{3} & 1 & \frac{7}{3} &  0 & 0 & 0 & \frac{2}{3} & 1 &  \frac{7}{3} \\[0.5em]
	 Y^2X&1 & \frac{7}{3} & \frac{191}{32} &  0 & 0 & 0 & 1 & \frac{7}{3} & \frac{191}{32}  \\[0.5em]
	Y^{2}X^2& 0 & 0 & 0 & \frac{2}{3} & 1 & \frac{7}{3} & 0 & 0 & \frac{49}{3}
	}.$$
However, 
	the matrix $N$ is not psd, since the eigenvalues are
		$43.0994$, $18.8854$, $4.16403$, $0.863394$, $0.382338$, $0.132343$, $0.0775933$, $0.0656297$, $-0.00347317.$
	By Theorem \ref{y3-2402}, $\beta$ does not have a representing measure supported on $y^3=y$.
	Since any representing measure $\mu$ for $\beta$ must satisfy $\supp \;\mu\subseteq \{(x,y)\colon y^3=y\}$,  
	$\beta$ does not have any $\RR^2$--representing measure.
\end{example}

\begin{example}\label{ex2-0504}
Let 
	$\beta
	=\big(1, 
	\frac{15}{11}, 0,  
	3, 0, \frac{8}{11},  
	\frac{81}{11}, 0, \frac{12}{11}, 0,
	\frac{213}{11}, 0, \frac{28}{11}, 0, \frac{8}{11},
	\frac{585}{11}, 0, \frac{72}{11}, 0, \frac{12}{11}, 0,
	\frac{107121}{715}, 0, \frac{196}{11}$, $0, \frac{28}{11}, 0, \frac{8}{11}
	\big)$
	be a bivariate sequence of degree 6.
	We will demonstrate below how Case 1 from the proof of Theorem \ref{y3-2402}
	can be applied to construct a 9--atomic representing measure for $\beta$ supported on the union of parallel lines $y=0$, $y=1$ and $y=-1$.
The moment matrix $PM_3P^T$ is equal to:
	\begin{equation*}
	PM_3P^T=
	\kbordermatrix{
		& 1&X&X^2 &X^3&Y&YX&YX^2&Y^2&Y^2X&Y^{3}\\[0.5em]
	 1 &1 & \frac{15}{11} & 3 & \frac{81}{11} & 0 & 0 & 0 & \frac{8}{11} & \frac{12}{11} & 0\\[0.5em]
	X&  \frac{15}{11} & 3 & \frac{81}{11} & \frac{213}{11} & 0 & 0 & 0 & \frac{12}{11} & \frac{28}{11} & 0  \\[0.5em]
	X^2 &  3 & \frac{81}{11} & \frac{213}{11} & \frac{585}{11} & 0 & 0 & 0 & \frac{28}{11} & \frac{72}{11} & 0\\[0.5em]
	X^3& \frac{81}{11} & \frac{213}{11} & \frac{585}{11} & \frac{107121}{715} & 0 & 0 & 0 & \frac{72}{11} & \frac{196}{11} & 0 \\[0.5em]
	Y& 0 & 0 & 0 & 0 & \frac{8}{11} & \frac{12}{11} & \frac{28}{11} & 0 & 0 & \frac{8}{11}\\[0.5em]
	YX& 0 & 0 & 0 & 0 & \frac{12}{11} & \frac{28}{11} & \frac{72}{11} & 0 & 0 & \frac{12}{11}  \ \\[0.5em]
	YX^2& 0 & 0 & 0 & 0 & \frac{28}{11} & \frac{72}{11} & \frac{196}{11} & 0 & 0 & \frac{28}{11}\\[0.5em]
	Y^2& \frac{8}{11} & \frac{12}{11} & \frac{28}{11} & \frac{72}{11} & 0 & 0 & 0 & \frac{8}{11} & \frac{12}{11} & 0\\[0.5em]
	Y^2X&\frac{12}{11} & \frac{28}{11} & \frac{72}{11} & \frac{196}{11} & 0 & 0 & 0 & \frac{12}{11} & \frac{28}{11} & 0 \\[0.5em]
	Y^{3}&0 & 0 & 0 & 0 & \frac{8}{11} & \frac{12}{11} & \frac{28}{11} & 0 & 0 & \frac{8}{11}
	}.
	\end{equation*}
	$PM_3P^T$ is psd with the eigenvalues
		$174.128$, $21.0215$, $1.64351$, $0.734399$, $0.379065$, $0.266664$, $0.0622926$, $0.0387818$, $0$, $0$
	and the column relations 
		$$Y^3=Y,\qquad 
		Y^2X=-\frac{10}{9}\cdot 1 +\frac{310}{27}\cdot X- \frac{95}{9}\cdot X^2 +\frac{65}{27}\cdot X^3 +\frac{2}{3}\cdot Y^2.$$
	The transformation $\phi$ is the identity, i.e., $\phi(x,y)=(x,y)$.
	The matrix $N$ is positive definite with the eigenvalues 
	 	$42.3043$, $20.624$, $1.18246$, $0.780712$, $0.410054$,$ 0.126054$,$ 0.0568576$, $0.0557925$, $0.00526127.$
	The matrix $\phi(M)(\mathbf{t})$ is equal to 
	\begin{equation*}
	\kbordermatrix{
		& 1&X&X^2 &X^3&Y&YX&YX^2&Y^2&Y^2X&Y^{2}X^2\\[0.5em]
	 1 &1 & \frac{15}{11} & 3 & \frac{81}{11} & 0 & 0 & 0 & \frac{8}{11} & \frac{12}{11} & \frac{28}{11}\\[0.5em]
	X&  \frac{15}{11} & 3 & \frac{81}{11} & \frac{213}{11} & 0 & 0 & 0 & \frac{12}{11} & \frac{28}{11} & \frac{72}{11}  \\[0.5em]
	X^2 &  3 & \frac{81}{11} & \frac{213}{11} & \frac{585}{11} & 0 & 0 & 0 & \frac{28}{11} & \frac{72}{11} & \frac{196}{11}\\[0.5em]
	X^3& \frac{81}{11} & \frac{213}{11} & \frac{585}{11} & \frac{107121}{715} & 0 & 0 & 0 & \frac{72}{11} & \frac{196}{11} & \mathbf{t} \\[0.5em]
	Y& 0 & 0 & 0 & 0 & \frac{8}{11} & \frac{12}{11} & \frac{28}{11} & 0 & 0 & 0\\[0.5em]
	YX& 0 & 0 & 0 & 0 & \frac{12}{11} & \frac{28}{11} & \frac{72}{11} & 0 & 0 & 0  \ \\[0.5em]
	YX^2& 0 & 0 & 0 & 0 & \frac{28}{11} & \frac{72}{11} & \frac{196}{11} & 0 & 0 & 0\\[0.5em]
	Y^2& \frac{8}{11} & \frac{12}{11} & \frac{28}{11} & \frac{72}{11} & 0 & 0 & 0 & \frac{8}{11} & \frac{12}{11} & \frac{28}{11}\\[0.5em]
	Y^2X&\frac{12}{11} & \frac{28}{11} & \frac{72}{11} & \frac{196}{11} & 0 & 0 & 0 & \frac{12}{11} & \frac{28}{11} & \frac{72}{11} \\[0.5em]
	Y^{2}X^2&\frac{28}{11} & \frac{72}{11} & \frac{196}{11}& \mathbf{t} & 0 & 0 & 0 & \frac{28}{11} & \frac{72}{11} & \frac{196}{11}\\[0.5em]}.
	\end{equation*}
	The parameter $t_0$ from the proof of Theorem \ref{y3-2402} satisfies $\det \phi(M)(t_0)=0$, since $\Rank \phi(M)(t_0)=\max(\Rank M,\Rank N)\leq 9$. 
	A calculation shows that 
		$\det \phi(M)(\mathbf{t})=-\frac{204800}{337186519813}(143\mathbf{t}-7164)^2,$
	which means that $t_0=\frac{7164}{143}$. 
	By definition of $\alpha(t)$ we have $\alpha(t_0)=t_0=\frac{7164}{143}$ (here 
	$\widetilde\alpha_2=-1$ and $\widetilde \alpha_3=1$).
	The parameter $u_0$ from the proof of Theorem \ref{y3-2402} is equal to the right lower corner of the matrix
	$B(t_0)(\widehat C)^\dagger (B(t_0))^T$, where
	\begin{align*}
		B(t_0)
		&={(\phi(M)(t_0))}|_{\{\vec{X}^{(0)}\},\{Y\vec{X}^{(1)},Y^2\vec{X}^{(1)}\}}\qquad \text{and}\qquad
		\widehat C 
		={(\phi(M)(t_0))}|_{\{Y\vec{X}^{(1)},Y^2\vec{X}^{(1)}\}}.
	\end{align*}
	A calculation shows that $u_0=\frac{1331888}{9295}$, while 
		$$B(t_0)(\widehat C)^\dagger (B(t_0))^T=H(\alpha(t_0),u_0)=A_{\widehat{\beta}^{(1)}}\in S_4$$
	where
	$\widehat\beta^{(1)}=\left(\frac{8}{11}, \frac{12}{11},\frac{28}{11}, \frac{72}{11},\frac{196}{11} ,\frac{7164}{143},\frac{1331888}{9295}\right)\in \RR^{7}.$
	Further on, 
		$$U(\alpha(t_0),u_0)=\phi(M)|_{\{\vec{X}^{(0)}\}}-H(\alpha(t_0),u_0)=A_{\widehat\beta^{(2)}}\in S_4$$
	where
	$\widehat\beta^{(2)}=\left(\frac{3}{11} , \frac{3}{11} , \frac{5}{11} , \frac{9}{11},\frac{17}{11},\frac{441}{143} , \frac{12137}{1859}\right)\in \RR^7.$
	A calculation shows that $\delta=0$, where $\delta$ is equal to the value of the generalized Schur complement of the $3\times 3$ leading principal submatrix of $U(\alpha(t_0),u_0)$.
	Further on, 
	\begin{align*}
		\widetilde B_{00}-\widehat A_{01}
		&={(H(\alpha(t_0),u_0))}|_{\{\vec{X}^{(1)}\}}-(\phi(M))|_{\{\vec{X}^{(1)}\},\{Y\vec{X}^{(1)}\}}
		={(H(\alpha(t_0),u_0))}|_{\{\vec{X}^{(1)}\}}
	\end{align*} is invertible with the eigenvalues $20.6, 0.41, 0.056$ 
	and hence Case 1 from the proof of Theorem \ref{y3-2402} applies:
	\begin{itemize}
		\item The matrix $U(\frac{7164}{143},\frac{1331888}{9295})=A_{\widehat \beta^{(2)}}$ 
			has a column relation $X^3=\frac{57}{13}X^2-\frac{62}{13}X+\frac{6}{13}1$ and hence
			by Theorem \ref{Hamburger}, it has the unique $\RR$--representing measure
			$\mu_1=\rho_1\delta_{x_1}+\rho_2\delta_{x_2}+\rho_3\delta_{x_3},$
			where 
				$(x_1, x_2, x_3)\approx (0.107053, 1.62587, 2.65169)$
			are the solutions of the column relation,
				$\big(\rho_1^{(1)} \; \rho_2^{(1)} \; \rho_3^{(1)}\big)^T
					=
					V_{x}^{-1}
					\big(\frac{3}{11} \; \frac{3}{11} \; \frac{5}{11}\big)^T
					\approx 
					\big(0.1199 \; 0.1414 \; 0.01126\big)$
			and $x=(x_1,x_2,x_3)$.\\

		\item The moment matrix $F(H(\frac{7163}{143},\frac{1331888}{9295}))$ satisfies the relations
			$Y^3=Y$, $Y^2X=X$, $Y^2=1$ and $X^3=\frac{57}{13}\cdot X^2-\frac{283}{65}\cdot X+\frac{12}{65}\cdot 1.$
		After using an affine linear transformation $\phi(x,y)=(x,\frac{1}{2}y+\frac{1}{2}),$ 	
		it turns out that 
			$\phi(F(H(\frac{7163}{143},\frac{1331888}{9295})))$
		is precisely the matrix from Example \ref{ex-1103-824} above.
		Hence, 
			$F(H(\frac{7163}{143},\frac{1331888}{9295}))$ 
		is equal to the moment matrix generated by the measure  			
		\begin{align*}
			\mu_2
			&=\rho'_1\delta_{\phi^{-1}(x_1',1)}+\rho'_2\delta_{\phi^{-1}(x_2',1)}+\rho'_3\delta_{\phi^{-1}(x_3',1)}+
				\rho'_1\delta_{\phi^{-1}(x_1',0)}+\rho'_2\delta_{\phi^{-1}(x_2',0)}+\rho'_3\delta_{\phi^{-1}(x_3',0)}\\
			&=\rho'_1\delta_{(x_1',1)}+\rho'_2\delta_{(x_2',1)}+\rho'_3\delta_{(x_3',1)}+
				\rho'_1\delta_{(x_1',-1)}+\rho'_2\delta_{(x_2',-1)}+\rho'_3\delta_{(x_3',-1)},
		\end{align*}
		where $x_1'\approx 0.0445476, x_2'\approx 1.17328, x_3'\approx 3.53217$
		and $\rho'_1\approx 0.0541354$, $\rho'_2\approx 0.233231$, $\rho'_3\approx 0.0762695$.
	\end{itemize}
	We conclude that $\beta=\phi(\beta)$ has a 9--atomic representing measure $\mu_1+\mu_2$ supported on the union of the lines $y=-1,y=0$ and $y=1$.
\end{example}

\begin{example}\label{ex3-0504}
Let 
	$\beta
	=\big(1, 
	\frac{5}{7}, 0,  
	1, 0, \frac{4}{7},  
	 \frac{11}{7},0,  \frac{2}{7}, 0,
	\frac{19}{7},0, \frac{2}{7}, 0,\frac{4}{7},
	5,0, \frac{2}{7}, 0,\frac{2}{7}, 0,
	 \frac{67}{7},0, \frac{2}{7}, 0,\frac{2}{7}, 0,\frac{4}{7}
	\big)$
	be a bivariate sequence of degree 6.
	We will demonstrate below how Case 2 from the proof of Theorem \ref{y3-2402}
	can be applied to construct a 7--atomic representing measure for $\beta$ supported on the union of parallel lines $y=0$, $y=1$ and $y=-1$.
	The moment matrix $PM_3P^T$ is equal to:
	\begin{equation*}
	PM_3P^T=
	\kbordermatrix{
		& 1&X&X^2 &X^3&Y&YX&YX^2&Y^2&Y^2X&Y^{3}\\[0.5em]
	1& 1 & \frac{5}{7} & 1 & \frac{11}{7} & 0 & 0 & 0 & \frac{4}{7} & \frac{2}{7} & 0 \\[0.5em]
	X& \frac{5}{7} & 1 & \frac{11}{7} & \frac{19}{7} & 0 & 0 & 0 & \frac{2}{7} & \frac{2}{7} & 0 \\[0.5em]
	X^2 & 1 & \frac{11}{7} & \frac{19}{7} & 5 & 0 & 0 & 0 & \frac{2}{7} & \frac{2}{7} & 0 \\[0.5em]
	X^3& \frac{11}{7} & \frac{19}{7} & 5 & \frac{67}{7} & 0 & 0 & 0 & \frac{2}{7} & \frac{2}{7} & 0 \\[0.5em]
	Y&0 & 0 & 0 & 0 & \frac{4}{7} & \frac{2}{7} & \frac{2}{7} & 0 & 0 & \frac{4}{7}\\[0.5em]
	YX& 0 & 0 & 0 & 0 & \frac{2}{7} & \frac{2}{7} & \frac{2}{7} & 0 & 0 & \frac{2}{7}\\[0.5em]
	YX^2 & 0 & 0 & 0 & 0 & \frac{2}{7} & \frac{2}{7} & \frac{2}{7} & 0 & 0 & \frac{2}{7}\\[0.5em]
	Y^2& \frac{4}{7} & \frac{2}{7} & \frac{2}{7} & \frac{2}{7} & 0 & 0 & 0 & \frac{4}{7} & \frac{2}{7} & 0 \\[0.5em]
	 Y^2X& \frac{2}{7} & \frac{2}{7} & \frac{2}{7} & \frac{2}{7} & 0 & 0 & 0 & \frac{2}{7} & \frac{2}{7} & 0 \\[0.5em]
	Y^{3}& 0 & 0 & 0 & 0 & \frac{4}{7} & \frac{2}{7} & \frac{2}{7} & 0 & 0 & \frac{4}{7} \\[0.5em]
	}.
	\end{equation*}
	$PM_3P^T$ is psd with eigenvalues
		$13.3804$, $1.49602$, $1.36779$, $0.2337$, $0.218266$, $0.138494$, $0.0224999$, $0$, $0$, $0$
	and the column relations 
		$Y^3=Y$, $YX^2=YX$ and $X^3=3X^2-2X.$
	The transformation $\phi$ is the identity, i.e., $\phi(x,y)=(x,y)$.
	Moreover, $N$
	is psd with the eigenvalues 
		$4.35783$, $1.07956$, $0.97549$, $0.318458$, $0.167368$, $0.0866196$, $0.0146715$, $0$, $0$,
	a matrix $\widetilde B_{00}-\widehat A_{01}=A_{\widehat\beta^{(1)}}\in S_3$ where 
	$\widehat\beta^{(1)}=\left(\frac{4}{7} , \frac{2}{7} , \frac{2}{7} ,  \frac{2}{7} ,\frac{2}{7} \right)\in \RR^5$
	is singular with the eigenvalues 
		$0.97549, 0.167368, 0$
	and 
	$\widehat A_{01}=0$ is also singular.
	Hence, Case \eqref{cond-2403-1008} of Theorem \ref{y3-2402} applies.
	Computing $t'$ and $u'$ using formulas \eqref{eq-t'-0304-1557} and \eqref{eq-u'-0304-1558} (the vector $v$ is equal to 
	$(0 \; -1 )^T$), 
	we get $t'=u'=\frac{2}{7}$ and thus $A_{\gamma}$ as in \eqref{181121-2108}
	has 
	$\gamma=
	\left(
		\frac{3}{7} , \frac{3}{7} , \frac{5}{7} , \frac{9}{7} , \frac{17}{7} , \frac{33}{7} , \frac{65}{7}
	\right)\in \RR^7.$
	The matrix 
	$A_\gamma(3)$ is invertible with eigenvalues $3.35337, 0.200729, 0.017325$.
	It has a column relation $X^3=3X^2-2X$ which is satisfed by $x_1=0, x_2=1, x_3=2$.
			By Theorem \ref{Hamburger}, 
			$\gamma$ has the unique $\RR$--representing measure
			$\mu_1=\frac{1}{7}\delta_{0}+\frac{1}{7}\delta_{1}+\frac{1}{7}\delta_{2}$,
			where we obtained the densities by
				$(\frac{1}{7} \; \frac{1}{7} \; \frac{1}{7})^T
					=
					V_{(0,1,2)}^{-1}
					(\frac{3}{7} \; \frac{3}{7} \; \frac{5}{7})^T.$
	It remains to compute the measure for $F(H(t',u'))$, which has the column relation $Y^2=1$.
	After using an affine linear transformation $\phi(x,y)=(x,\frac{1}{2}(y+1))$,  
	the matrix $\phi(F(H(t',u'))$ has the column relation $Y^2=Y$. 
	Using the proof of Theorem \ref{y2=1} to construct the measure for $\phi(F(H(t',u'))$, the matrices 
	$F$ and $E$ are both equal to 
	$A_{\widehat \beta^{(2)}}\in S_4$, where
	$\widehat \beta^{(2)}
			=\left(\frac{2}{7},\frac{1}{7},\frac{1}{7},\frac{1}{7},\frac{1}{7},\frac{1}{7},\frac{1}{7} \right)\in \RR^7.
	$
			$A_{\widehat \beta^{(2)}}$ is a psd Hankel matrix satisfying the column relation $X^2-X=0$
	and hence by Theorem \ref{Hamburger}, 
			$\widehat \beta^{(2)}$ has the unique $\RR$--representing measure
			$\frac{1}{7}\delta_{0}+\frac{1}{7}\delta_{1}$,
			where we obtained the densities by
				$(\frac{1}{7} \; \frac{1}{7})^T
					=
					V_{(0,1)}^{-1}
					(\frac{2}{7} \; \frac{1}{7} )^T$.
	Hence, $M_3$ admits a $(\Rank M_3)$-atomic measure on the variety $y^3=y$
	with the atoms
		$(0,0), (1,0), (2,0), (0,-1), (1,-1), (0,1), (1,1),$
	all with densities $\frac{1}{7}$.
\end{example}

\begin{example}\label{ex4-0504}
Let 
	$\beta
	=\big(1, 
	0,\frac{1}{7},
	1, 0, \frac{3}{7},  
	\frac{1}{14} \left(-1-2 \sqrt{23}\right),0,0 ,\frac{1}{7},
	2,0,\frac{2}{7},0, \frac{3}{7},
	\frac{1}{16} \left(4-9 \sqrt{23}\right),0, \frac{2}{7}$, $0,\frac{2}{7},\frac{1}{7},
	5,0,\frac{2}{7},0,\frac{2}{7},0,\frac{3}{7}
	\big)$
	be a bivariate sequence of degree 6.
	We will demonstrate below how Case \eqref{cond-2403-1008} from the proof of Theorem \ref{y3-2402}
	can be applied to prove that $\beta$ does not have a representing measure supported on $\RR^2$. 
	The moment matrix $PM_3P^T$ is equal to:
	\begin{equation*}
	PM_3P^T=
	\kbordermatrix{
		& 1&X&X^2 &X^3&Y&YX&YX^2&Y^2&Y^2X&Y^{3}\\[0.5em]
	1& 1 & 0 & 1 & \beta_{3,0} & \frac{1}{7} & 0 & 0 & \frac{3}{7} &
	   \frac{2}{7} & \frac{1}{7} \\[0.5em]
	X&   0 & 1 & \beta_{3,0} & 2 & 0 & 0 & 0 & \frac{2}{7} & \frac{2}{7} & 0 \\[0.5em]
	X^2 &  1 & \beta_{3,0} & 2 & \beta_{5,0}& 0 &
	   0 & 0 & \frac{2}{7} & \frac{2}{7} & 0\\[0.5em]
	X^3&  \beta_{3,0} & 2 & \beta_{5,0} & 5 & 0 &
	   0 & 0 & \frac{2}{7} & \frac{2}{7} & 0\\[0.5em]
	Y&  \frac{1}{7} & 0 & 0 & 0 & \frac{3}{7} & \frac{2}{7} & \frac{2}{7} & \frac{1}{7} & 0 &
   		\frac{3}{7} \\[0.5em]
	YX& 0 & 0 & 0 & 0 & \frac{2}{7} & \frac{2}{7} & \frac{2}{7} & 0 & 0 & \frac{2}{7} \\[0.5em]
	YX^2& 0 & 0 & 0 & 0 & \frac{2}{7} & \frac{2}{7} & \frac{2}{7} & 0 & 0 & \frac{2}{7} \\[0.5em]
	Y^2&    \frac{3}{7} & \frac{2}{7} & \frac{2}{7} & \frac{2}{7} & \frac{1}{7} & 0 & 0 & \frac{3}{7} &
	   \frac{2}{7} & \frac{1}{7}  \\[0.5em]
	Y^2X&   \frac{2}{7} & \frac{2}{7} & \frac{2}{7} & \frac{2}{7} & 0 & 0 & 0 & \frac{2}{7} & \frac{2}{7} &
   		0  \\[0.5em]
	Y^{3}& \frac{1}{7} & 0 & 0 & 0 & \frac{3}{7} & \frac{2}{7} & \frac{2}{7} & \frac{1}{7} & 0 &
   		\frac{3}{7}}.
	\end{equation*}
	where $\beta_{3,0}=\frac{1}{14} \left(-1-2 \sqrt{23}\right)$ and $\beta_{5,0}=\frac{1}{16} \left(4-9 \sqrt{23}\right)$.
	$PM_3P^T$
	is psd with the eigenvalues 
		$7.2968$, $2.0162$, $1.28926$, $0.286198$, $0.16608$, $0.0883151$, $0$, $0$, $0$, $0$,
	and the column relations 
	\begin{align*}
		Y^3 &=Y,\qquad YX^2=YX,\\
		Y^2X
			&=-0.42\cdot 1 +0.77\cdot X +0.65\cdot X^2+0.42\cdot Y- 0.42\cdot YX,\\
		Y^2	
			&=-0.42\cdot 1 +0.77\cdot X +0.65\cdot X^2 +1.42\cdot Y- 1.42\cdot YX.
	\end{align*}
	The transformation $\phi$ is the identity, i.e., $\phi(x,y)=(x,y)$. The matrix $N$ is psd with the eigenvalues
	$2.99455$, $1.6278$, $0.915861$,$ 0.304399$, $0.157391$, $0$, $0$, $0$, $0,$
	a matrix $\widetilde B_{00}-\widehat A_{01}=A_{\widehat\beta^{(1)}}\in S_3$ where $\widehat\beta^{(1)}={(\frac{2}{7},\frac{2}{7},\frac{2}{7},\frac{2}{7},\frac{2}{7})}\in\RR^5$ is psd and singular
	with the eigenvalues $\frac{2}{7}, 0, 0$ and $\widehat A_{01}=A_{\widehat\beta^{(2)}}\in S_3$ where $(\frac{2}{7},0,0,0,0)\in\RR^5$ is also psd and singular. 
	Hence, Case \eqref{cond-2403-1008} of Theorem \ref{y3-2402} applies.
	Computing $t'$ and $u'$ using formulas \eqref{eq-t'-0304-1557} and \eqref{eq-u'-0304-1558} (the vector $v$ is equal to 
	$(-1 \; 0 )^T$), 
	we get $t'=u'=\frac{2}{7}$ and thus $A_{\gamma}$ as in \eqref{181121-2108}
	has 
	$\gamma=\left(\frac{4}{7} , -\frac{2}{7} ,\frac{5}{7} ,\widetilde\beta_{3,0},
			\frac{12}{7},\widetilde\beta_{5,0},\frac{33}{7} \right)\in \RR^7,$
	where 
		$\widetilde\beta_{3,0}=\frac{1}{14} \left(-1-2 \sqrt{23}\right)-\frac{2}{7}$
	and
		$\widetilde\beta_{5,0}=\frac{1}{16} \left(4-9\sqrt{23}\right)-\frac{2}{7}$.
	We have that  
		$\Rank(A_\gamma)=3>\Rank (A_{\gamma}(3))=2.$
	By Theorem \ref{y3-2402}, $\beta$ does not have a representing measure supported on $y^3=y$.
	Since any representing measure $\mu$ for $\beta$ must satisfy $\supp \;\mu\subseteq \{(x,y)\colon y^3=y\}$,  
	$\beta$ does not have any representing measure supported on $\RR^2$.
\end{example}

\section{The TMP on the union of parallel lines in the pure case}
\label{S5}

Let $k,n\in \NN$, $k\geq n\geq 2$ and  
		$\beta:=\beta^{(2k)}=(\beta_{i,j})_{i,j\in \ZZ_+,i+j\leq 2k}$ 
	be a real bivariate sequence of degree $2k$
	such that $\beta_{0,0}>0$ and let $M_k=M_k(\beta)$ be its associated moment matrix.
	To establish the existence of a representing measure for $\beta$ supported
	on the union of $n$ parallel lines, we can assume, after applying the appropriate affine linear transformation, 
	that the variety is
		\begin{equation*}\label{variety-2804-2029}
			K_{n,\underline{\alpha}}:=\Big\{(x,y)\in \RR^2\colon y \cdot \prod_{i=1}^{n-1}(y-\alpha_i)=0\Big\},
		\end{equation*}
	where $\underline{\alpha}=(\alpha_1,\ldots,\alpha_{n-1})\in \RR^{n-1}$ and $\alpha_i\in \RR\setminus\{0\}$ are pairwise distinct nonzero real numbers. 
	For $\ell=0,\ldots,n-1$ we denote by 
		$$\displaystyle c_\ell=\sum_{1\leq j_1<j_2\cdots <j_\ell\leq n-1} \alpha_{j_1}\cdots \alpha_{j_\ell}$$
	the sum of all products of $\ell$ pairwise distinct numbers from the set $\{\alpha_1,\ldots,\alpha_{n-1}\}$.
	If a $K_{n,\underline\alpha}$--representing measure for $\beta$ exists, then $M_k$ must satisfy the column relations 
	\begin{equation}\label{col-rel-3003-2124}
		Y^nX^j=c_1\cdot Y^{n-1}X^j-c_2\cdot Y^{n-2}X^j+\cdots+(-1)^{n} c_{n-1}\cdot YX^j
	\end{equation}
	for $j=0,\ldots,k-n$.
	On the level of moments the relations \eqref{col-rel-3003-2124} mean that 
	\begin{equation}\label{241121-2104}
		\beta_{i+j,n+\ell}=c_1\cdot \beta_{i+j,n-1+\ell}-c_2\cdot \beta_{i+j,n-2+\ell}+\cdots+(-1)^{n} c_{n-1}\cdot \beta_{i+j,1+\ell}
	\end{equation}
	for every $i,j,\ell\in \NN\cup\{0\}$ such that $i+j+n+\ell\leq 2k$.
	We write 
		$$\vec{X}^{(i)}:=(1,X,\ldots,X^{k-i})\qquad \text{and}\qquad Y^j\vec{X}^{(i)}:=(Y^j,Y^jX,\ldots,Y^jX^{k-i})$$
	for $i=0,\ldots,n-1$ and $j\in \NN$.
	In the presence of column relations \eqref{col-rel-3003-2124}, the column space $\cC(M_k)$ is spanned by the columns in the set 
	$\displaystyle \cC^{(n)}:=\{\vec{X}^{(0)},Y\vec{X}^{(1)},\ldots,Y^{n-1}\vec{X}^{(n-1)}\}.$
	Let $P$ be a permutation matrix such that moment matrix $PM_kP^T$ has rows and columns indexed in the 	order $\vec{X}^{(0)}, Y\vec{X}^{(1)},Y^2\vec{X}^{(2)},\ldots,Y^k$.
	Let
	\begin{equation}\label{notation-Mk-3003-2129}
		S^{(\beta)}_{k,n}:=(PM_k(\beta)P^T)|_{\cC^{(n)}}
		=
		\left(\begin{array}{cc} A_{00} & B \\ B^T & C\end{array}\right)\in \RR^{s(k,n)\times s(k,n)},
	\end{equation}
	where 
		$s(k,n)=\sum_{i=0}^{n-1} (k+1-i)=\frac{1}{2}n(2k+3-n),$
	be the restriction of the moment matrix $PM_kP^T$ to the rows and columns in the set $\cC^{(n)}$
	and
	\begin{equation}\label{091221-1302}
	A_{00}=(PM_kP^T)|_{\{\vec{X}^{(0)}\}},\quad 
			B=(PM_kP^T)|_{\{\vec{X}^{(0)}\},\cC^{(n)}\setminus \{\vec{X}^{(0)}\}},\quad
			 C=(PM_kP^T)|_{\cC^{(n)}\setminus \{\vec{X}^{(0)}\}}.
	\end{equation}
	If a $K_{n,\underline\alpha}$--representing measure for $\beta$ exists, then it generates some extension of $\beta$ with moments of higher 		
	degrees. In particular, $S^{(\beta)}_{k,n}$ can be extended to the moment matrix by adding the rows and columns indexed by
	\begin{equation}\label{parameters-2604-1947}
		X^iY^j,\quad \text{where}\quad 2\leq j\leq n-1 \quad\text{and}\quad k+1-j\leq i\leq k-1.
	\end{equation}
	The moments corresponding to monomials 
	\begin{equation}\label{parameters-2604-2013}
		x^iy^j,\quad \text{where}\quad 2\leq j\leq n-1 \quad\text{and}\quad 2k+1-j\leq i\leq 2k-1,
	\end{equation}
	are parameters which we denote by $\mbf{t_{i,j}}$, 
	while other moments in the extension can be expressed from the original moments and
 	parameters $\mbf{t_{i,j}}$ using the relations \eqref{241121-2104} which are satisfied in the extension.
	We write 
		$$\vec{\mbf{t}}_{(k,n)}:=(\mbf{t_{2k-1,2}},\mbf{t_{2k-2,3}},\mbf{t_{2k-1,3}},\ldots,
			\mbf{t_{2k+2-n,n-1}},\ldots,\mbf{t_{2k-1,n-1}}).$$
	Hence, the moment matrix extending $S^{(\beta)}_{k,n}$ by the rows and columns indexed by monomials in \eqref{parameters-2604-1947}
 	in the order 
		$$\underbrace{\vec{X}^{(0)}, Y\vec{X}^{(1)},Y^2\vec{X}^{(2)}\ldots,Y^k}_{\text{old rows/columns}},
		\underbrace{Y^2X^{k-1},Y^3X^{k-2},Y^3X^{k-1},\ldots,Y^{n-1}X^2,\ldots,Y^{n-1}X^{k-1}}_{\text{rows/columns added}}$$
	is a linear matrix function in the parameters $\mbf{t_{i,j}}$, which we denote
	by 
	\begin{align}\label{251121-0824}
	\begin{split}
		S^{(\beta)}_{k,n}(\vec{\mbf{t}}_{(k,n)})
			&=\left(\begin{array}{cc} 
					S^{(\beta)}_{k,n} & B(\vec{\mbf{t}}_{(k,n)})  \\ \rule{0pt}{\normalbaselineskip} 
					(B(\vec{\mbf{t}}_{(k,n)}))^T & C(\vec{\mbf{t}}_{(k,n)})
				\end{array}\right)\\
			&=\left(\begin{array}{ccc} 
					A_{00} & B & B_1(\vec{\mbf{t}}_{(k,n)}) \\ \rule{0pt}{\normalbaselineskip} 
					B^T & C & B_2(\vec{\mbf{t}}_{(k,n)})  \\ \rule{0pt}{\normalbaselineskip}  
					(B_1(\vec{\mbf{t}}_{(k,n)}))^T & (B_2(\vec{\mbf{t}}_{(k,n)}))^T &C(\vec{\mbf{t}}_{(k,n)})
				\end{array}\right)
			\in \RR^{t(k,n)\times t(k,n)},
	\end{split}
	\end{align}
	where $t(k,n)=s(k,n)+\binom{n-1}{2}$. 

	The following theorem gives a sufficient condition for the existence of a $K_{n,\underline\alpha}$--representing measure for $\beta$ in terms of the feasibility of the 
	linear matrix inequality
	$S^{(\beta)}_{k,n}(\vec{\mbf{t}}_{(k,n)})\succ 0$.

	\begin{theorem}\label{cor-0104-1556}
	Let $k,n\in \NN$, $k\geq n\geq 2$ and $K_{n,\underline\alpha}:=\big\{(x,y)\in \RR^2\colon y \cdot \prod_{i=1}^{n-1}(y-\alpha_i)=0\big\}$ be a union of $n$ parallel lines, where	
	$\underline{\alpha}=(\alpha_1,\ldots,\alpha_{n-1})\in \RR^{n-1}$ and $\alpha_i\in \RR\setminus\{0\}$ are pairwise distinct nonzero real numbers. 
	Let $\beta:=\beta^{(2k)}=(\beta_{i,j})_{i,j\in \ZZ_+,i+j\leq 2k}$ be a real bivariate sequence of degree $2k$ such that $M_k$ is positive semidefinite and satisfies  
	the column relations \eqref{col-rel-3003-2124} for $j=0,\ldots,k-n$.
	Assume also the notation above. 
	If there exists $\vec t_{(k,n)}\in \RR^{\binom{n-1}{2}}$ such that
		$S^{(\beta)}_{k,n}(\vec{t}_{(k,n)})\succ 0,$ 
	then  $\beta$ has a $K_{n,\underline\alpha}$--representing measure.
	\end{theorem}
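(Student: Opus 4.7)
The approach is to induct on $n \geq 2$, along the lines of Sections \ref{S3} and \ref{S4}. For the base case $n = 2$, since $\binom{n-1}{2} = 0$ the vector $\vec{t}_{(k,2)}$ is empty and $S^{(\beta)}_{k,2}(\vec{t}_{(k,2)})$ coincides with the matrix $M$ of Theorem \ref{y2=1}. Since $M \succ 0$, the principal submatrix $N$ is also strictly positive definite, so $\Rank N = 2k$, and Corollary \ref{cor-0304-1917} immediately produces the desired $K_{2, \underline\alpha}$-representing measure.

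For the induction step, assume the result for $n-1 \geq 2$. After an ALT we may assume one of the lines is $y = 0$, so the variety is $\{y=0\} \cup K_{n-1, \underline{\alpha}'}$ with $\underline{\alpha}' = (\alpha_1, \ldots, \alpha_{n-1})$. I would seek a decomposition $\beta = \widetilde{\beta} + \widehat{\beta}$ where $\widetilde{\beta}$ is supported on $\{y = 0\}$ (so $\widetilde{\beta}_{i,j} = 0$ for $j \geq 1$) and $\widehat{\beta}$ is supported on $K_{n-1, \underline{\alpha}'}$. The identification $\widehat{\beta}_{i,j} = \beta_{i,j}$ (or the corresponding $\mathbf{t}_{i,j}$ from $\vec{t}_{(k,n)}$) for $j \geq 1$, together with the $K_{n-1}$-column relation
\[
\widehat{\beta}_{m, n-1} = c_1 \widehat{\beta}_{m, n-2} - c_2 \widehat{\beta}_{m, n-3} + \cdots + (-1)^{n-2} c_{n-1} \widehat{\beta}_{m, 0}
\]
(valid because $c_{n-1} = \prod_{i=1}^{n-1}\alpha_i \neq 0$) determines $\widehat{\beta}_{i,0}$ for $i = 0, \ldots, 2k-1$; the only remaining free moment is $v := \widetilde{\beta}_{2k,0} = \beta_{2k,0} - \widehat{\beta}_{2k,0}$.

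Let $\vec{t}_{(k,n-1)}$ denote the sub-vector of $\vec{t}_{(k,n)}$ consisting of the coordinates with $j \leq n-2$. This defines a candidate extension $H(v) := S^{(\widehat{\beta})}_{k,n-1}(\vec{t}_{(k,n-1)})$, which depends on $v$ only through its $(X^k, X^k)$-entry; similarly, the Hankel matrix $A_{\widetilde{\beta}^{(0)}}(v)$ with entries $\widetilde{\beta}_{i+i',0}$ depends on $v$ only through its $(k,k)$-entry. The plan is to find $v$ such that both $A_{\widetilde{\beta}^{(0)}}(v) \succ 0$ and $H(v) \succ 0$. Under such a choice, the induction hypothesis applied to $\widehat{\beta}$ and $\vec{t}_{(k,n-1)}$ yields a $K_{n-1, \underline{\alpha}'}$-representing measure for $\widehat{\beta}$ (the $(n-1)$-lines column relations allowing one to write $M_k(\widehat{\beta}) = (L')^T H(v) L'$, so $H(v) \succ 0$ forces $M_k(\widehat{\beta}) \succeq 0$), while Theorem \ref{Hamburger}\eqref{pt5-v2206} applied to $(\widetilde{\beta}_{i,0})_{i=0}^{2k}$ produces a $\RR$-representing measure for $\widetilde{\beta}$ on $\{y = 0\}$; summing these yields the desired $K_{n, \underline\alpha}$-representing measure for $\beta$.

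The main obstacle will be establishing the existence of such a $v$. I would rely on the identity $S^{(\beta)}_{k,n}(\vec{t}_{(k,n)}) = F(v) + G(v)$, where $F(v)$ is the matrix with $A_{\widetilde{\beta}^{(0)}}(v)$ in the $(\vec{X}^{(0)}, \vec{X}^{(0)})$-block and zero elsewhere, and $G(v) := S^{(\beta)}_{k,n}(\vec{t}_{(k,n)}) - F(v)$. The $K_{n-1}$-column relations satisfied by $\widehat{\beta}$ force each column $Y^{n-1} X^i$ of $G(v)$ (for $0 \leq i \leq k-1$) to be a fixed linear combination of columns indexed by $\mathcal{C}^{(n-1)}$ together with the added $(n-1)$-level monomials, so $G(v) = L^T H(v) L$ for an injective linear map $L$; consequently $G(v) \succeq 0$ with $\Rank G(v) = t(k,n-1)$ is equivalent to $H(v) \succ 0$. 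As $v$ increases, the $(k,k)$-entry of $A_{\widetilde{\beta}^{(0)}}(v)$ grows while the $(X^k, X^k)$-entry of $H(v)$ shrinks, so the sets $\{v : A_{\widetilde{\beta}^{(0)}}(v) \succ 0\}$ and $\{v : H(v) \succ 0\}$ are half-lines of the form $(v_*, \infty)$ and $(-\infty, v^{**})$ respectively; the strict positivity $S^{(\beta)}_{k,n}(\vec{t}_{(k,n)}) \succ 0$ should force $v_* < v^{**}$ by a Schur complement and continuity argument, which is where the hypothesis is decisively used. Rigorously verifying this inequality is the technical heart of the proof.
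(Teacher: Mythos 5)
Your skeleton coincides with the paper's: induction on $n$, base case $n=2$ settled exactly as you describe via Corollary \ref{cor-0304-1917}, and the induction step via the decomposition $\beta=\widetilde\beta+\widehat\beta$ with $\widetilde\beta$ supported on $y=0$ and $\widehat\beta$ on the remaining $n-1$ lines (this is the content of Lemmas \ref{existence-of-a-measure-0104-0941} and \ref{231121-1215}). Your observation that, once $\vec t_{(k,n)}$ is fixed, the recursion \eqref{241121-2104} determines $\widehat\beta_{i,0}$ for every $i\le 2k-1$ and leaves only the single scalar $v=\widetilde\beta_{2k,0}$ free is correct, and is a legitimate repackaging of the paper's $(n-1)$--parameter family $H^{(\beta)}_{k,n}(u_1,\ldots,u_{n-1})$: your $\widehat\beta_{2k-n+2,0},\ldots,\widehat\beta_{2k-1,0}$ are the paper's $u_1,\ldots,u_{n-2}$, computed from $\beta$ and $\vec t_{(k,n)}$.

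The genuine gap is that the decisive step --- producing a $v$ with $A_{\widetilde\beta^{(0)}}(v)\succ 0$ and $H(v)\succ 0$ simultaneously --- is asserted rather than proved; you explicitly defer ``the technical heart,'' and that heart is essentially the entire proof. Two things are missing. First, for your feasibility sets to be nonempty half-lines $(v_*,\infty)$ and $(-\infty,v^{**})$ at all, the matrices obtained by deleting the row/column carrying $v$ (resp.\ $X^k$) must be positive definite; neither is a principal submatrix of $S^{(\beta)}_{k,n}(\vec t_{(k,n)})$, since both involve the recursively defined $\widehat\beta_{i,0}$, so this already requires an argument. Second, and more importantly, the inequality $v_*<v^{**}$ needs the following identification, which is the paper's Claim 1: the generalized Schur complement $H=\bigl(\begin{smallmatrix}B & B_1\end{smallmatrix}\bigr)\widehat C(\vec t_{(k,n)})^{\dagger}\bigl(\begin{smallmatrix}B & B_1\end{smallmatrix}\bigr)^T$ of the non-$\vec X^{(0)}$ block is a \emph{Hankel} matrix whose first $2k+2-n$ entries are exactly the $\gamma_i$ of \eqref{23112021-1331} (Hankelness of the $\vec X^{(0)}\times\vec X^{(0)}$ Schur complement is not automatic; it follows from the column relations holding in the lower block rows, via \eqref{091221-1006}). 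Granting this, $S^{(\beta)}_{k,n}(\vec t_{(k,n)})\succ 0$ gives $A_{00}-H\succ 0$, which is precisely $A_{\widetilde\beta^{(0)}}(v^{**})\succ 0$ and hence $v_*<v^{**}$, since the flatness $\Rank\cM^{(1)}=\Rank\widehat C(\vec t_{(k,n)})$ shows that the $(X^k,X^k)$--entry of $H$ is the exact threshold at which $H(v)$ degenerates. One then takes $v=v^{**}-\delta$ for small $\delta>0$ and must still verify (the paper's Claim 2) that the perturbed moment matrix $M_k(\widetilde\beta)$ satisfies the $(n-1)$--line column relations \eqref{231121-1922} and has rank $(n-1)k+1$, so that after the shift $\phi(x,y)=(x,y-\alpha_1)$ (which you also need before invoking the induction hypothesis, since your $\widehat\beta$ lives on $n-1$ lines none of which is $y=0$) the matrix $S^{(\phi(\widetilde\beta))}_{k,n-1}(\vec t_{(k,n-1)})$ is positive definite. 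Without these verifications the proof is incomplete.
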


	\begin{remark}
	\begin{enumerate}
	\item\label{pt1-101221-0441}
	Mimicking the idea with which we solved the TMP--$n$pl for $n=2$ and $n=3$, also in the general case $n\in \NN$
	we will apply the ALT such that one of the lines becomes $y=0$ and then study the existence of the decompositions 
	$\beta=\widetilde \beta+\widehat \beta$ such that $\widetilde\beta$, $\widehat \beta$ have representing measures 
	supported on $y=0$ and on the other $n-1$ parallel lines, respectively. 
	In the TMP--2pl only the moment $\widetilde \beta_{2k,0}$ was a parameter, 
	while in the TMP--3pl only the moments $\widetilde \beta_{2k-1,0}$ and $\widetilde \beta_{2k,0}$ were parameters.
	Both cases were small enough so that we could characterize precisely when the representing measure for $\beta$ exists.
	In the general case, the moments $\widetilde \beta_{2k-n+1,0},\ldots, \widetilde \beta_{2k,0}$ are parameters and we are not able to handle all cases 
	to characterize precisely when the representing measure for $\beta$ exists. However, at least in the special pure case, which could be also called \textit{purely pure}, because
	not only the parallel lines determine the only column relations of $M_k$ but also $M_k$ can be extended to the matrix of highest possible rank when we add the rows and columns 
	indexed by monomials in \eqref{col-rel-3003-2124}, we can prove that the representing measure exists. 
	\item
	The proof of Theorem \ref{cor-0104-1556} will be by induction on the number $n$ of parallel lines. But first we will establish some 
	lemmas which will be used in the proof.
	One of them is the analog of Lemma \ref{existence-of-a-measure-2403-1104-v2} for general $n\in \NN$. 
	This lemma can be possibly used in future to handle also not purely pure cases of $\beta$. However, a very demanding analysis will be needed to control the $(n-1)$--dimensional
	parameter space $\widetilde \beta_{2k-n+1,0},\ldots, \widetilde \beta_{2k,0}$ described in \eqref{pt1-101221-0441} above.
\end{enumerate}
\end{remark}
 
	We define a matrix function 
		$$F^{(\beta)}_{k,n}:
			\RR^{(k+1)\times (k+1)}\to \RR^{s(k,n)\times s(k,n)},\qquad
			F^{(\beta)}_{k,n}(\mathbf{Z})=\left(\begin{array}{cc}
			\mathbf{Z} 			& B 		\\ 
			B^T 		& C 		\\
		\end{array}\right),$$
	where $B$ and $C$ are defined by \eqref{091221-1302}.
	It holds that
	\begin{equation*}
		S^{(\beta)}_{k,n}=F^{(\beta)}_{k,n}(\mathbf{Z})+\left((A_{00}-\mathbf{Z})\oplus \mbf{0}_{s(k,n)-k-1}\right),
	\end{equation*}
	where $\mbf{0}_{s(k,n)-k-1}$ stands for a $(s(k,n)-k-1)\times (s(k,n)-k-1)$ zero matrix.
	If $\beta$ admits a $K_{n,\underline\alpha}$--representing measure $\mu$, then it is supported on the union of parallel lines $y=0$, 
	$y=\alpha_1$, $\ldots$, $y=\alpha_{n-1}$.
	Since the moment matrix generated by the measure supported on $y=0$ can be nonzero only when
	restricted to the columns and rows indexed by $\vec{X}^{(0)}$, it folllows that the restriction of the moment matrix 
	generated by ${\mu|}_{\cup_{i=1}^{n-1}\{y=\alpha_i\}}$ (resp.\ ${\mu|}_{\{y=0\}}$) to the columns and rows from $\cC^{(n)}$
	is of the form $F^{(\beta)}_{k,n}(B_{00})$ (resp.\ $(A_{00}-B_{00})\oplus \mbf{0}_{s(k,n)-k-1}$), where $B_{00}\in S_{k+1}$ is a Hankel matrix.
	This discussion establishes the implication $(\Rightarrow)$ of the following lemma.


	\begin{lemma}	\label{existence-of-a-measure-0104-0941}
		$\beta$ has a $K_{n,\underline\alpha}$--representing measure if and only if there exist a Hankel matrix 
		$B_{00}\in S^{k+1}$,
		such that:
		\begin{enumerate}
			\item  The sequence with the moment matrix $F^{(\beta)}_{k,n}(B_{00})$ has a $\widetilde K_{n,\underline\alpha}$--representing measure, where
				\begin{equation}\label{variety-3003-2251}
					\widetilde K_{n,\underline\alpha}:=\Big\{(x,y)\in \RR^2\colon \prod_{i=1}^{n-1}(y-\alpha_i)=0\Big\}.
				\end{equation}
			\item The sequence with the moment matrix $A_{00}-B_{00}$ has a $\RR$--representing measure.
		\end{enumerate}
	\end{lemma}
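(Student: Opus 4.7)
The implication $(\Rightarrow)$ has already been established in the paragraph preceding the lemma, so the plan is to prove $(\Leftarrow)$. Suppose a Hankel matrix $B_{00}\in S_{k+1}$ is given together with measures witnessing conditions (1) and (2): let $\mu_1$ be a $\widetilde K_{n,\underline\alpha}$--representing measure for the sequence whose moment matrix is $F^{(\beta)}_{k,n}(B_{00})$, and let $\nu$ be a $\RR$--representing measure for the sequence with moment matrix $A_{00}-B_{00}$. I then form the candidate representing measure $\mu := \mu_1 + (\nu\otimes \delta_0)$ on $\RR^2$, which is supported on $\widetilde K_{n,\underline\alpha}\cup\{y=0\}=K_{n,\underline\alpha}$.

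Let $M_k^{(1)}$ and $M_k^{(2)}$ be the bivariate moment matrices (of degree $2k$) generated by $\mu_1$ and $\nu\otimes\delta_0$, respectively, so that $M_k(\mu) = M_k^{(1)}+M_k^{(2)}$. Because $\mu_1$ is supported on $\widetilde K_{n,\underline\alpha}$, every atom of $\mu_1$ satisfies $\prod_{i=1}^{n-1}(y-\alpha_i)=0$, so $M_k^{(1)}$ satisfies the column relations \eqref{col-rel-3003-2124}. Because $\nu\otimes\delta_0$ is supported on $y=0$, the matrix $M_k^{(2)}$ vanishes on every row/column indexed by a monomial involving $Y$, so it also satisfies \eqref{col-rel-3003-2124} trivially. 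Consequently $M_k(\mu)$ satisfies these column relations, and by construction
\begin{equation*}
(P\,M_k(\mu)\,P^T)\big|_{\cC^{(n)}}
= F^{(\beta)}_{k,n}(B_{00}) + \bigl((A_{00}-B_{00})\oplus \mathbf{0}_{s(k,n)-k-1}\bigr)
= S^{(\beta)}_{k,n},
\end{equation*}
which coincides with $(PM_kP^T)|_{\cC^{(n)}}$.

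It remains to promote this agreement on the block indexed by $\cC^{(n)}$ to equality of the full moment matrices $M_k$ and $M_k(\mu)$. Here the relations \eqref{col-rel-3003-2124}, which both matrices satisfy, imply that the column spaces of both $PM_kP^T$ and $PM_k(\mu)P^T$ are spanned by the columns indexed by $\cC^{(n)}$. Thus both admit the block representation
\begin{equation*}
PM_kP^T = \begin{pmatrix} S^{(\beta)}_{k,n} & S^{(\beta)}_{k,n} W \\ W^T S^{(\beta)}_{k,n} & W^T S^{(\beta)}_{k,n} W \end{pmatrix} = PM_k(\mu)P^T,
\end{equation*}
for one and the same matrix $W$ (whose columns express each column outside $\cC^{(n)}$ as a fixed linear combination of the columns in $\cC^{(n)}$, via \eqref{col-rel-3003-2124} together with its recursive consequences). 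This mirrors the step used in Lemma \ref{existence-of-a-measure-2403-1104-v2} for $n=3$. The only delicate point, and the step I expect to require care, is verifying that the column relations \eqref{col-rel-3003-2124} force the same expansion matrix $W$ for both $M_k$ and $M_k(\mu)$ — i.e.\ that every entry of $M_k$ outside the $\cC^{(n)}\times \cC^{(n)}$ block is a fixed polynomial in the entries inside it (determined solely by $\underline\alpha$). This follows inductively from \eqref{241121-2104}: any moment $\beta_{i,j}$ with $j\geq n$ is rewritten via \eqref{241121-2104} as a linear combination (with coefficients depending only on $c_1,\ldots,c_{n-1}$) of moments with strictly smaller $y$-degree, and iterating reduces to moments indexed by $\cC^{(n)}$. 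Hence $M_k=M_k(\mu)$ and $\mu$ is the sought $K_{n,\underline\alpha}$--representing measure for $\beta$.
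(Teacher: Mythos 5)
Your proposal is correct and follows essentially the same route as the paper's own proof: construct the measure $\mu_1+(\nu\otimes\delta_0)$ supported on $K_{n,\underline\alpha}$, observe that its moment matrix agrees with $M_k$ on the block indexed by $\cC^{(n)}$ and that both matrices satisfy the column relations \eqref{col-rel-3003-2124}, and conclude equality of the full matrices since those relations determine all remaining columns by the same matrix $W$. Your explicit remark that the recursion \eqref{241121-2104} pins down the coefficients of $W$ solely in terms of $\underline\alpha$ is just a slightly more detailed justification of the step the paper states directly.
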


	\begin{proof}
	The implication $(\Rightarrow)$ follows by the discussion in the paragraph before the
	lemma. It remains to establish the implication $(\Leftarrow)$.
	Let $P$ be a permutation matrix such that moment matrix $PM_kP^T$ has rows and columns indexed in the order $\vec{X}^{(0)}, Y\vec{X}^{(1)},Y^2\vec{X}^{(2)},\ldots,Y^k$.
	Let $M_k^{(1)}$ (resp.\ $M_k^{(2)}$) be the moment matrix generated by the measure $\mu_1$ (resp.\ $\mu_2$) supported on $\widetilde K_{n,\underline\alpha}$ (resp.\ $y=0$)
	such that ${(PM_k^{(1)}P^T)|}_{\mathcal C^{(n)}}=F^{(\beta)}_{k,n}(B_{00})$ 
	(resp.\ ${(PM_k^{(2)}P^T)|}_{\mathcal C^{(n)}}=(A_{00}-B_{00})\bigoplus \mathbf{0}_{s(k,n)-k-1}$).
	Since $\supp(\mu_i)\subset K_{n,\underline{\alpha}}$, $i=1,2$, the moment matrix
	$\widetilde M_k=M_k^{(1)}+M_k^{(2)}$ corresponding to the measure $\mu_1+\mu_2$ has column relations 
	\eqref{col-rel-3003-2124}.
	Since ${(PM_kP^T)|}_{\mathcal C^{(n)}}=S^{(\beta)}_{k,n}={(P\widetilde{M}_kP^T)|}_{\mathcal C^{(n)}}$ and
	both $M_k$, $\widetilde{M}_k$ satisfy the column relations \eqref{col-rel-3003-2124}, 
	it follows that 
		$$M_k=\left(\begin{array}{cc} S^{(\beta)}_{k,n} & S^{(\beta)}_{k,n}W \\ \rule{0pt}{\normalbaselineskip}  W^T S^{(\beta)}_{k,n}  & W^TS^{(\beta)}_{k,n}W\end{array}\right)=\widetilde{M}_k$$ 
	for some matrix $W\in \RR^{s(k,n)\times \frac{(k-n+1)(k-n+2)}{2}}$. This concludes the proof of the implication $(\Leftarrow)$.
	\end{proof}
	
	For $i=0,\ldots,2k-n+1$ we define the numbers $\gamma_i\in \RR$ by 
	\begin{equation}\label{23112021-1331}
		\gamma_i=(-1)^{n} c_{n-1}^{-1}\left(\beta_{i,n-1}-
		c_1\cdot \beta_{i,n-2}+\cdots+\ldots+(-1)^{n-1}c_{n-2} \cdot \beta_{i,1}\right).
	\end{equation}
	We define the matrix function  
	$$H^{(\beta)}_{k,n}:\RR^{n-1}\to S_{k+1},\qquad 
		H^{(\beta)}_{k,n}(\mathbf{u}_1,\ldots,\mathbf{u}_{n-1})=
		A_{(\gamma_0,\ldots,\gamma_{2k-n+1},\mathbf{u}_1,\ldots,\mathbf{u}_{n-1})},$$
	where $A_{(\gamma_0,\ldots,\gamma_{2k-n+1},\mathbf{u}_1,\ldots,\mathbf{u}_{n-1})}$ is defined as in
	\eqref{vector-v}.
	The following lemma describes the form of the matrix $B_{00}$ from 
	Lemma \ref{existence-of-a-measure-0104-0941}.

	\begin{lemma}\label{231121-1215}
	Assume there is a Hankel matrix $B_{00}$ such that the sequence with the moment matrix $F^{(\beta)}_{k,n}(B_{00})$  admits a 
	$\widetilde K_{n,\underline\alpha}$--representing measure $\mu_1$ where $\widetilde K_{n,\underline\alpha}$ is as in \eqref{variety-3003-2251}.
	Then 
	\begin{equation}\label{231121-1200}
		B_{00}=H^{(\beta)}_{k,n}(\widetilde\beta_{2k-n+2,0}(\mu_1),\ldots,\widetilde\beta_{2k,0}(\mu_1)),
	 \end{equation}
	where $\widetilde\beta_{2k-i,0}(\mu_1)$ 
	are the moments of the monomials $x^{2k-i}$ with respect to $\mu_1$.	
\end{lemma}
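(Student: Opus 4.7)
The plan is to exploit two facts in sequence: first, the support condition on $\mu_1$ forces an algebraic relation between its moments with different $y$-powers; second, the block structure of $F^{(\beta)}_{k,n}(B_{00})$ already pins down all moments of $\mu_1$ with $y$-power $\geq 1$ (in the relevant range) to be equal to the given moments of $\beta$. Combining the two will determine all but the top $n-1$ cross-diagonals of $B_{00}$ and show that the remainder exactly parameterizes the value of $H^{(\beta)}_{k,n}$.

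First, I would note that $B_{00}$, being the $(\vec{X}^{(0)},\vec{X}^{(0)})$-block of the moment matrix generated by $\mu_1$, is necessarily the Hankel matrix whose cross-diagonals are $\widetilde\beta_{m,0}(\mu_1)$ for $m=0,\ldots,2k$. Next, I would use $\supp\mu_1\subseteq \widetilde K_{n,\underline\alpha}$ to write the polynomial identity $\prod_{i=1}^{n-1}(y-\alpha_i)=0$ on the atoms. Rearranging this (the constant term is the nonzero number $(-1)^{n-1}c_{n-1}$) produces an expression for $1$ as a polynomial in $y$ with zero constant term and degree at most $n-1$. Multiplying by $x^m$ and integrating against $\mu_1$ yields the moment identity
\begin{equation*}
 \widetilde\beta_{m,0}(\mu_1)=(-1)^{n}c_{n-1}^{-1}\bigl(\widetilde\beta_{m,n-1}(\mu_1)-c_1\widetilde\beta_{m,n-2}(\mu_1)+\cdots+(-1)^{n-2}c_{n-2}\widetilde\beta_{m,1}(\mu_1)\bigr),
\end{equation*}
valid for every $m$ for which the right-hand side is defined.

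The next step is bookkeeping: I would inspect the blocks of $F^{(\beta)}_{k,n}(B_{00})$ other than $B_{00}$. By the very definition of $F^{(\beta)}_{k,n}$, these blocks coincide with $B$ and $C$, which are the restrictions of $M_k(\beta)$ to the monomial indices in $\cC^{(n)}\setminus\{\vec{X}^{(0)}\}$. Hence for every pair $(m,j)$ with $1\leq j\leq n-1$ and $m+j\leq 2k$ the entry $\widetilde\beta_{m,j}(\mu_1)$ is forced to equal $\beta_{m,j}$. In particular, for $m\leq 2k-n+1$ all of $\widetilde\beta_{m,1}(\mu_1),\ldots,\widetilde\beta_{m,n-1}(\mu_1)$ are determined to be $\beta_{m,1},\ldots,\beta_{m,n-1}$. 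Plugging this into the displayed identity and comparing with the definition \eqref{23112021-1331} of $\gamma_m$ gives $\widetilde\beta_{m,0}(\mu_1)=\gamma_m$ for every $m=0,\ldots,2k-n+1$.

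Finally, for the remaining indices $m\in\{2k-n+2,\ldots,2k\}$ the required higher $y$-moments $\widetilde\beta_{m,j}(\mu_1)$ simply do not appear as entries of $F^{(\beta)}_{k,n}(B_{00})$, so they are unconstrained by $\beta$ and the values $\widetilde\beta_{m,0}(\mu_1)$ are free. Setting $\mathbf{u}_j:=\widetilde\beta_{2k-n+1+j,0}(\mu_1)$ for $j=1,\ldots,n-1$ and reading off the definition of $H^{(\beta)}_{k,n}$ yields $B_{00}=H^{(\beta)}_{k,n}(\widetilde\beta_{2k-n+2,0}(\mu_1),\ldots,\widetilde\beta_{2k,0}(\mu_1))$, as claimed. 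There is no real obstacle in the argument — the only delicate point is matching the index ranges correctly so as to verify that exactly $n-1$ degrees of freedom remain, matching the arity of $H^{(\beta)}_{k,n}$.
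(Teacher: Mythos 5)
Your proposal is correct and follows essentially the same route as the paper: both use the support condition $\supp\mu_1\subseteq\widetilde K_{n,\underline\alpha}$ to derive the recursion among the moments of $\mu_1$, solve for $\widetilde\beta_{m,0}(\mu_1)$ in terms of the higher $y$-power moments, observe that those coincide with the given $\beta_{m,j}$ (since the off-diagonal blocks of $F^{(\beta)}_{k,n}(B_{00})$ are prescribed by $\beta$), and conclude $\widetilde\beta_{m,0}(\mu_1)=\gamma_m$ for $m\leq 2k-n+1$, leaving exactly the $n-1$ top entries free. Your version is slightly more explicit about the index bookkeeping, but there is no substantive difference.
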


\begin{proof}
	Let $M_{k+1}^{(1)}$ be the moment matrix generated by the measure $\mu_1$. 
	Since $\supp(\mu_1)\subseteq \widetilde K_{n,\underline\alpha}$,
	the moments $\widetilde\beta_{i,j}(\mu_1)$ in $M_{k+1}^{(1)}$ satisfy the relations
	\begin{equation}\label{23112021-1239}
		\widetilde\beta_{i,j+n-1}(\mu_1)=
		c_1\cdot \widetilde\beta_{i,j+n-2}(\mu_1)-
		c_2\cdot \widetilde\beta_{i,j+n-3}(\mu_1)
		+\cdots+
		(-1)^nc_{n-1}\cdot \widetilde\beta_{i,j}(\mu_1),
	\end{equation}
	for $i,j\in \NN\cup\{0\}$ such that $0\leq i+j+n-1\leq 2k+2$.
	Equivalently, \eqref{23112021-1239} can be expressed in the form
	\begin{equation*}\label{23112021-1242}
		 \widetilde\beta_{i,j}(\mu_1)=(-1)^{n} c_{n-1}^{-1}\left(\widetilde\beta_{i,j+n-1}(\mu_1)-
		c_1\cdot\widetilde\beta_{i,j+n-2}(\mu_1)+\cdots+\ldots+(-1)^{n-1}c_{n-2} \cdot\widetilde\beta_{i,j+1}(\mu_1)\right).
	\end{equation*}
	Since $\widetilde \beta_{i,j}=\beta_{i,j}$ for every $i,j\in \NN\cup \{0\}$ with $j>0$ and $i+j\leq 2k$,
	we have that
	\begin{equation}\label{23112021-1246}
		\widetilde \beta_{i,0}(\mu_1)=(-1)^{n} c_{n-1}^{-1}\left(\beta_{i,n-1}-
		c_1\cdot \beta_{i,n-2}+\cdots+\ldots+(-1)^{n-1}c_{n-2} \cdot \beta_{i,1}\right).
	\end{equation}
	for every $i\leq 2k-n+1$.
	The equalities \eqref{23112021-1246} imply 
  	the equality \eqref{231121-1200}.
\end{proof}

Now we are ready to prove Theorem \ref{cor-0104-1556}.

\begin{proof}[Proof of Theorem \ref{cor-0104-1556}]
	Let us fix $k\in \NN$, $k\geq 2$. The proof will be by induction on the number of lines $n$. \\

\noindent\textit{Base of induction -- Theorem \ref{cor-0104-1556} holds for $n=2$:}\\

			Note that $S^{(\beta)}_{k,2}(\vec{\mbf{t}}_{(k,2)})=S^{(\beta)}_{k,2}$. So the assumption $S^{(\beta)}_{k,2}(\vec{t}_{(k,2)})\succ 0$ is equivalent to $S^{(\beta)}_{k,2}\succ 0$.
			Hence, in the notation of Section \ref{S3}, $M=S^{(\beta)}_{k,2}\succ 0$ and $N=(S^{(\beta)}_{k,2})|_{\{\vec{X}^{(1)},Y\vec{X}^{(1)}\}}\succ 0$.
			Using Corollary \ref{cor-0304-1917}, $\beta$ has a $K_{2,\underline\alpha}$--representing measure.\\

\noindent\textit{Induction step -- If Theorem \ref{cor-0104-1556} holds in the case of $n-1$ lines where $2\leq n-1<k$, then it also holds in the case of $n$ lines:}\\
 
	We assume that Theorem \ref{cor-0104-1556} holds in the case of $n-1$ lines where $2\leq n-1<k$.
	By Lemmas \ref{existence-of-a-measure-0104-0941} and \ref{231121-1215}, proving Theorem \ref{cor-0104-1556} in the case of $n$ lines is equivalent to 
	showing that there exist $u_1,\ldots,$$u_{n-1}\in \RR$ such that the sequence with a matrix $F^{(\beta)}_{k,n}(H^{(\beta)}_{k,n}(u_1,\ldots,u_{n-1}))$ 
	admits a $\widetilde K_{n,\underline\alpha}$--representing measure and
	the sequence with a matrix $A_{00}-H^{(\beta)}_{k,n}(u_1,\ldots,u_{n-1})$ admits a $\RR$--representing measure.
By the equivalence between \eqref{pt1-281021-2128} and \eqref{pt2-281021-2128}
	of Theorem \ref{block-psd} used for $\cM=S^{(\beta)}_{k,n}(\vec{t}_{(k,n)})$, 
	we have that 	
	\begin{equation}\label{231121-2024}
		S^{(\beta)}_{k,n}(\vec{t}_{(k,n)})\Big/
			\left(\begin{array}{cc} C & B_2(\vec{t}_{(k,n)}) \\ (B_2(\vec{t}_{(k,n)}))^T & C(\vec{t}_{(k,n)})\end{array}\right)
			=A_{00}-H\succ 0,
	\end{equation}
	where 
		$$H:=
		\left(\begin{array}{cc} B & B_1(\vec{t}_{(k,n)})\end{array}\right)\cdot
		\left(\begin{array}{cc} C & B_2(\vec{t}_{(k,n)}) \\ (B_2(\vec{t}_{(k,n)}))^T & C(\vec{t}_{(k,n)})\end{array}\right)^\dagger \cdot
		\left(\begin{array}{c} B^T \\ (B_1(\vec{t}_{(k,n)}))^T\end{array}\right).
		$$
	\noindent \textbf{Claim 1.}
	$H=H^{(\beta)}_{k,n}(u_1,\ldots,u_{n-1})$ for some $u_i\in\RR$, $i=1,\ldots,n-1$.\\

	\noindent \textit{Proof of Claim 1.} 
	Let $Q$ be a permutation matrix such that moment matrix $QS^{(\beta)}_{k,n}(\vec{t}_{(k,n)})Q^T$ 
	has rows and columns indexed in the 	order $\vec X^{(0)}, Y\vec X^{(1)},Y^2\vec X^{(1)}\ldots,Y^{n-1}\vec X^{(1)}.$
	Then $QS^{(\beta)}_{k,n}(\vec{t}_{(k,n)})Q^T$ is equal to
		$$
		\kbordermatrix{
		& \vec X^{(0)}&\cdots & Y^j\vec X^{(1)}& \cdots & Y^{n-1}\vec X^{(1)}\\[0.5em]
		 (\vec X^{(0)})^T& A_{00}& \cdots & A_{0j}(\vec{t}_{(k,n)}) & \cdots & A_{0,n-1}(\vec{t}_{(k,n)})\\
		\vdots & \vdots & \ddots & \vdots & \ddots & \vdots\\
		 (Y^i\vec X^{(1)})^T& A_{i0}(\vec{t}_{(k,n)})& \cdots  & A_{ij}(\vec{t}_{(k,n)}) & \cdots & A_{i,n-1}(\vec{t}_{(k,n)})\\
		\vdots & \vdots & \ddots & \vdots & \ddots & \vdots\\
		 (Y^{n-1}\vec X^{(1)})^T& A_{n-1,0}(\vec{t}_{(k,n)})& \cdots  & A_{n-1,j}(\vec{t}_{(k,n)}) & \cdots & A_{n-1,n-1}(\vec{t}_{(k,n)})},$$
	where $A_{ij}(\vec{t}_{(k,n)})$ are Hankel matrices.
	By construction of $H$ we have that
	\begin{align}\label{091221-1006}
		H|_{\{\vec{X}^{0}\},\{\vec{X}^{(1)}\}}
			&= \frac{(-1)^n}{c_{n-1}}\left(A_{0,n-1}(\vec{t}_{(k,n)})
			-c_1 A_{0,n-2}(\vec{t}_{(k,n)})+\ldots+(-1)^{n-1}c_{n-2} A_{01}(\vec{t}_{(k,n)})\right).
	\end{align}
	Hence, $H|_{\{\vec{X}^{0}\},\{\vec{X}^{(1)}\}}$ is a Hankel matrix. Since $H$ is also a symmetric matrix, it follows that $H$ is Hankel and 
	thus equal to $A_{\gamma^{(1)}}\in S_{k+1}$ for some $\gamma^{(1)}=(\gamma_0^{(1)},\ldots,\gamma_{2k}^{(1)})\in\RR^{2k+1}$.
	It remains to prove that $\gamma_i^{(1)}=\gamma_i$ for $i=0,\ldots,2k+1-n$, where $\gamma_i$ are defined by \eqref{23112021-1331}.
	By \eqref{091221-1006} we have that for $i<k$:
	\begin{align*}
		\gamma_i^{(1)}
			&= \frac{(-1)^n}{c_{n-1}}\left((A_{0,n-1}(\vec{t}_{(k,n)}))_{0,i}
			-c_1 (A_{0,n-2}(\vec{t}_{(k,n)}))_{0,i}+\ldots+(-1)^{n-1}c_{n-2} (A_{01}(\vec{t}_{(k,n)}))_{0,i}\right)\\
			&=\frac{(-1)^n}{c_{n-1}}\left(\beta_{i,n-1}
			-c_1 \beta_{i,n-2}+\ldots+(-1)^{n-1}c_{n-2} \beta_{i,1}\right)=\gamma_i,
	\end{align*}
	while for $k\leq i\leq 2k+1-n$:
	\begin{align*}
		\gamma_i^{(1)}
			&= \frac{(-1)^n}{c_{n-1}}\big((A_{0,n-1}(\vec{t}_{(k,n)}))_{i-k+1,k-1}
			-c_1 (A_{0,n-2}(\vec{t}_{(k,n)}))_{i-k+1,k-1}+\ldots\\
			&\hspace{2cm}+(-1)^{n-1}c_{n-2} (A_{01}(\vec{t}_{(k,n)}))_{i-k+1,k-1}\big)\\
			&=\frac{(-1)^n}{c_{n-1}}\left(\beta_{i,n-1}
			-c_1 \beta_{i,n-2}+\ldots+(-1)^{n-1}c_{n-2} \beta_{i,1}\right)=\gamma_i.
	\end{align*}
	This proves the claim.\hfill $\blacksquare$\\

	Using \eqref{231121-2024} and Claim 1, we have that
		$T:=A_{00}-H^{(\beta)}_{k,n}(u_1,\ldots,u_{n-1}+\delta)\succ 0$
	for $0<\delta$ small enough. Since $T$ is Hankel, it is equal to $A_{\gamma^{(2)}}$ for some $\gamma^{(2)}\in \RR^{2k+1}.$ 
	By Theorem \ref{Hamburger} used for $\gamma^{(2)}$, a $(\Rank T)$--atomic $\RR$--representing measure for $\gamma^{(2)}$ exists.
	To conclude the induction step, it remains to prove that the sequence with the moment matrix
	$$F^{(\beta)}_{k,n}(H^{(\beta)}_{k,n}(u_1,\ldots,u_{n-1}+\delta))=
		\left(\begin{array}{cc}
			H^{(\beta)}_{k,n}(u_1,\ldots,u_{n-1}+\delta)			& B 		\\ 
			B^T 		& C 		\\
		\end{array}\right)$$
	admits a $\widetilde K_{n,\underline\alpha}$--representing measure. 
	Let $P$ be the permutation matrix defined before Theorem \ref{cor-0104-1556}.
	Note that $F^{(\beta)}_{k,n}(H^{(\beta)}_{k,n}(u_1,\ldots,u_{n-1}+\delta))$ is equal to $(PM_{k}(\widetilde\beta)P^T)|_{\mathcal C^{(n)}}$
	where $\widetilde\beta$ is a bivariate sequence of degree $2k$ that differs from $\beta$ only in the numbers $\widetilde\beta_{i,0}$ where $0\leq i\leq 2k$.
	Moreover, since $M_{k}(\beta)$ satisfies the column relations \eqref{col-rel-3003-2124} for $j=0,\ldots,k-n$ where the numbers $\beta_{i,0}$, $0\leq i\leq 2k$,
	do not occur, 
	it follows that $M_{k}(\widetilde\beta)$ also satisfies these column relations. 
	For $\widetilde\beta$ to have a $\widetilde K_{n,\underline\alpha}$--representing measure we also have to prove that $M_{k}(\widetilde\beta)$ satisfies the column relations
	\begin{equation}\label{231121-1922}
		Y^{n-1}X^j=c_1\cdot Y^{n-2}X^j-c_2\cdot Y^{n-3}X^j+\cdots+(-1)^{n} c_{n-1}\cdot X^j
	\end{equation}
	for $j=0,\ldots,k-1$.\\
	
	\noindent \textbf{Claim 2.}
	$M_{k}(\widetilde\beta)$ satisfies the column relations \eqref{231121-1922} for $j=0,\ldots,k-1$
	and 
	\begin{equation}\label{241121-2252}
		\Rank M_{k}(\widetilde\beta)=(n-1)k+1.
	\end{equation}

	\noindent \textit{Proof of Claim 2.} 
	Let 
		$$\cM^{(1)}=\left(\begin{array}{ccc} 
					H & B & B_1(\vec{t}_{(k,n)}) \\  
					B^T & C & B_2(\vec{t}_{(k,n)}) \\  
					(B_1(\vec{t}_{(k,n)}))^T & (B_2(\vec{t}_{(k,n)}))^T &C(\vec{t}_{(k,n)})
				\end{array}\right)
					=\left(\begin{array}{ccc} 
					H & \widehat{B}(\vec{t}_{(k,n)}) \\  
					(\widehat{B}(\vec{t}_{(k,n)}))^T & \widehat{C}(\vec{t}_{(k,n)}) 
				\end{array}\right).$$
	By the equivalence between \eqref{pt1-281021-2128} and \eqref{pt2-281021-2128} of Theorem \ref{block-psd}
	used for $\cM=\cM^{(1)}$,
	we have that $\cM^{(1)}\succeq 0$.
	Further on,
	\begin{equation}\label{23112021-1915}
		\Rank \cM^{(1)}=\Rank \widehat{C}(\vec{t}_{(k,n)})=(n-1)k,
	\end{equation}
	where we used \eqref{prop-2604-1140-eq2} of Proposition \ref{prop-2604-1140} for $\cM=\cM^{(1)}$ in the first equality
	and invertibility of $S^{(\beta)}_{k,n}(\vec{t}_{(k,n)})$ in the second equality.
	Since $(PM_k(\widetilde\beta)P^T)|_{\cC^{n}}\succeq \cM^{(1)}$ and $(PM_k(\widetilde\beta)P^T)|_{\cC^{n}}$ differs from $\cM^{(1)}$ only in the entry of 
	the row and column $X^{k}$, we conclude that:
	\begin{itemize}
		\item The equality \eqref{241121-2252} follows from \eqref{23112021-1915} also using the fact that the column $X^k$ is linearly dependent from the other columns
			in $\cM^{(1)}$, while in $M_k(\widetilde\beta)$ this is not true.  
		\item To show that $M_k(\widetilde\beta)$ satisfies the column relations \eqref{231121-1922} for $j=0,\ldots,k-1$, 	
			it suffices to prove that $\cM^{(1)}$ satisfies the column relations \eqref{231121-1922} for $j=0,\ldots,k-1$.
	\end{itemize}
	Since $\Rank \cM^{(1)}=\Rank \widehat{C}(\vec{t}_{(k,n)})$, there is a matrix $W\in \RR^{(n-1)k\times (k+1)}$ such that
		$$\cM^{(1)}=\left(\begin{array}{cc}
				W^T\widehat{C}(\vec{t}_{(k,n)})W & W^T\widehat{C}(\vec{t}_{(k,n)})\\
				\widehat{C}(\vec{t}_{(k,n)})W & \widehat{C}(\vec{t}_{(k,n)})
			\end{array}\right).$$
	Morever, if $W'$ is any matrix satisfying $\widehat{C}(\vec{t}_{(k,n)})W'=(\widehat{B}(\vec{t}_{(k,n)}))^T$, then 
	\begin{equation}\label{231121-2012}
		(W')^T\widehat{C}(\vec{t}_{(k,n)}) W'=(W')^T\widehat{C}(\vec{t}_{(k,n)}) W=W^T\widehat{C}(\vec{t}_{(k,n)}) W,
	\end{equation}
	where we used $\widehat{C}(\vec{t}_{(k,n)})W'=\widehat{C}(\vec{t}_{(k,n)})W$ in the first equality and $(W')^T\widehat{C}(\vec{t}_{(k,n)})=W^T \widehat{C}(\vec{t}_{(k,n)})$ in the second.
	Relations \eqref{241121-2104} and the definition of the extension of $\beta$ 
	imply that the restriction 
		$$(\cM^{(1)})|_{\cup_{i=1}^{n-1}\{Y^i\vec{X}^{(1)}\},\cup_{i=0}^{n-1}\{Y^i\vec{X}^{(1)}\}}=
		\left(\begin{array}{cc}
				(\widehat{B}(\vec{t}_{(k,n)}))^T & \widehat{C}(\vec{t}_{(k,n)})
			\end{array}\right)$$
	satisfies the relations \eqref{231121-1922} for $j=0,\ldots,k-1$. 
	Using also \eqref{231121-2012}, it follows that
	$\cM^{(1)}$ satisfies the relations \eqref{231121-1922} for $j=0,\ldots,k-1$.
	\hfill $\blacksquare$\\

	Let $\phi(x,y)=(x,y-\alpha_1)$. By Proposition \ref{251021-2254}, $\widetilde\beta$ admits a $\widetilde K_{n,\underline\alpha}$--representing measure iff
 	$\phi(\widetilde\beta)$ admits a $\phi(\widetilde K_{n,\underline\alpha})$--representing measure, where 
		$$\phi(\widetilde K_{n,\underline\alpha})=\big\{(x,y)\in \RR^2\colon y \cdot \prod_{i=2}^{n-1}(y-(\underbrace{\alpha_i-\alpha_1}_{\widetilde\alpha_i}))=0\big\}.$$
	We see that $\phi(\widetilde K_{n,\underline\alpha})=K_{n-1,\underline{\widetilde\alpha}}$ where $\underline{\widetilde\alpha}=(\widetilde \alpha_2,\ldots,\widetilde \alpha_{n-1})\in \RR^{n-2}$.
	By Claim 2, $\cC(M_k(\phi(\widetilde\beta)))$ is spanned by the columns from the set $\{\vec{X}^{(0)},Y\vec X^{(1)},\ldots,Y^{n-2}\vec X^{(1)}\}$
	and this columns are linearly independent.
	Note that
		$S^{(\phi(\widetilde\beta))}_{k,n-1}(\vec{t}_{k,n-1})=(PM_k(\phi(\widetilde\beta)P^T)|_{\{\vec{X}^{(0)},Y\vec X^{(1)},\ldots,Y^{n-2}\vec X^{(1)}\}}\succ 0,$
	where 
		$\vec{t}_{k,n}=(\vec{t}_{k,n-1},t_{2k+2-n,n-1},\ldots,t_{2k-1,n-1}).$
	By the induction hypothesis used for $\phi(\widetilde \beta)$ and the set $K_{n-1,\widetilde\alpha}$, $\phi(\widetilde \beta)$ admits a representing measure on $K_{n-1,\widetilde\alpha}$,
	which concludes the proof of the induction step. \\

	This concludes the proof of the theorem.
\end{proof}

Note that in the case of two lines we have that $S^{(\beta)}_{k,2}(\vec{\mbf{t}}_{(k,2)})=S^{(\beta)}_{k,2}$ and hence
Theorem \ref{cor-0104-1556} is precisely the pure case solved by Corollary \ref{cor-0304-1917} (which was also used to prove the base
of induction in Theorem \ref{cor-0104-1556}).  In the case of three lines $S_{k,3}(\vec{\mbf{t}}_{(k,3)})$ is equal to $\phi(M)(\mbf{t})$
from the proof of Theorem \ref{y3-2402}. Using Corollary \ref{cor-0304-2008} under the assumption $S_{k,3}\succ 0$,
$\beta$ admits a representing measure iff also 
${\phi(M)(\mbf{t})|}_{\mathcal C^{(n)}\cup \{Y^2X^{k-1}\}\setminus \{X^k\}}\succeq 0$ which is independent of $\mbf{t}$.
The smallest $n$ where solving the linear matrix inequality $S_{(k,n)}(\vec{\mbf{t}}_{(k,n)})\succ 0$ in Theorem \ref{cor-0104-1556} is necessarily $n=4$, 
which we illustrate in the following example.

\begin{example}
	Let $n=4$, $k\geq n$, $\underline \alpha=(\alpha_1,\alpha_2,\alpha_3)\in \RR^3$ and $\beta$ be a bivariate sequence of degree $2k$.
	If the measure for $\beta$ supported on 
		$K_{4,\underline \alpha}=\big\{(x,y)\in \RR^2\colon y (y-\alpha_1)(y-\alpha_2)(y-\alpha_3)=0\big\}$
	exists, then it generates some extension of $\beta$, where the moments
	\begin{align*}
		\beta_{2k-3,4}
			&=(\alpha_1+\alpha_2+\alpha_3)\cdot\beta_{2k-3,3}-(\alpha_1\alpha_2+\alpha_1\alpha_3+\alpha_2\alpha_3)\cdot\beta_{2k-3,2}+\alpha_1\alpha_2\alpha_3 \cdot \beta_{2k-3.1},\\
		\beta_{2k-4,5}	
			&=(\alpha_1+\alpha_2+\alpha_3)\cdot\beta_{2k-4,4}-(\alpha_1\alpha_2+\alpha_1\alpha_3+\alpha_2\alpha_3)\cdot\beta_{2k-4,3}+\alpha_1\alpha_2\alpha_3 \cdot \beta_{2k-4,2},\\
		\beta_{2k-5,6}	
			&=(\alpha_1+\alpha_2+\alpha_3)\cdot\beta_{2k-5,5}-(\alpha_1\alpha_2+\alpha_1\alpha_3+\alpha_2\alpha_3)\cdot\beta_{2k-5,4}+\alpha_1\alpha_2\alpha_3 \cdot \beta_{2k-5,3},\\
		\beta_{2k-4,6}	
			&=(\alpha_1+\alpha_2+\alpha_3)\cdot\beta_{2k-4,5}-(\alpha_1\alpha_2+\alpha_1\alpha_3+\alpha_2\alpha_3)\cdot\beta_{2k-4,4}+\alpha_1\alpha_2\alpha_3 \cdot \beta_{2k-4,3},\\
		\beta_{2k-3,5}	
			&=(\alpha_1+\alpha_2+\alpha_3)\cdot\beta_{2k-3,4}-(\alpha_1\alpha_2+\alpha_1\alpha_3+\alpha_2\alpha_3)\cdot\beta_{2k-3,3}+\alpha_1\alpha_2\alpha_3 \cdot \beta_{2k-3,2},\\
		\beta_{2k-3,6}	
			&=(\alpha_1+\alpha_2+\alpha_3)\cdot\beta_{2k-3,5}-(\alpha_1\alpha_2+\alpha_1\alpha_3+\alpha_2\alpha_3)\cdot\beta_{2k-3,4}+\alpha_1\alpha_2\alpha_3 \cdot \beta_{2k-3,3},
	\end{align*}
	are already determined by $\beta$, while the moments
	\begin{align*}
		\beta_{2k-2,4}(\mathbf{t_1})
			&=(\alpha_1+\alpha_2+\alpha_3)\cdot \mathbf{t_1}-(\alpha_1\alpha_2+\alpha_1\alpha_3+\alpha_2\alpha_3)\cdot\beta_{2k-2,2}+\alpha_1\alpha_2\alpha_3 \cdot \beta_{2k-2,1},\\
		\beta_{2k-2,5}(\mathbf{t_1})	
			&=(\alpha_1+\alpha_2+\alpha_3)\cdot\beta_{2k-2,4}(\mathbf{t_1})-(\alpha_1\alpha_2+\alpha_1\alpha_3+\alpha_2\alpha_3)\cdot \mathbf{t_1}+\alpha_1\alpha_2\alpha_3 \cdot \beta_{2k-2,2},\\
		\beta_{2k-2,6}(\mathbf{t_1})	
			&=(\alpha_1+\alpha_2+\alpha_3)\cdot\beta_{2k-2,5}(\mathbf{t_1})-(\alpha_1\alpha_2+\alpha_1\alpha_3+\alpha_2\alpha_3)\cdot\beta_{2k-2,4}(\mathbf{t_1})+\alpha_1\alpha_2\alpha_3 \cdot\mathbf{t_1},
	\end{align*}
	also depend on $\mathbf{t_1}$, which is the moment of $x^{2k-2}y^{3}$. 
	We denote by $\mathbf{t_2}$, $\mathbf{t_3}$ the moments correponding to $x^{2k-1}y^{2}$,  $x^{2k-1}y^{3}$, respectively.
	The moment matrix $S_{k,4}(\mathbf{t_1},\mathbf{t_2},\mathbf{t_3})$ extending $S_{k,4}$ with the additional columns and rows  $Y^2 X^{k-1}, Y^3X^{k-2}, Y^3X^{k-1}$
	is equal to
	\begin{tiny}
	\begin{equation*}\label{notation-Mk-3003-2252}
		\begin{blockarray}{cccccccc}
		& \vec{X} & Y\vec{X}^{(1)} & Y^2 \vec{X}^{(2)} & Y^3 \vec{X}^{(3)}&Y^2X^{k-1} & Y^3X^{k-2}& Y^3X^{k-1}\\
		\begin{block}{c(c|cccccc)}
			\vec{X}^T & A_{00} & A_{01} & A_{02} & A_{14} & \begin{array}{c} d_1 \\ \mathbf{t_2} \end{array} & \begin{array}{c} d_2 \\ \mathbf{t_1}  \end{array} & \begin{array}{c} d_3 \\ \mathbf{t_1} \\ \mathbf{t_3}  \end{array}\\ [0.5em]
			\cline{2-8}
			(Y\vec{X}^{(1)})^T & A_{01}^T & A_{11} &  A_{12}  & A_{24} & \begin{array}{c} c_1 \\ \mathbf{t_1} \end{array} &c_2 & \begin{array}{c} c_3 \\ \beta_{2k-2,4}(\mathbf{t_1}) \end{array} \\[0.5em]
			\cline{2-8}\\[-0.8em]
			(Y^2\vec{X}^{(2)})^T& A_{02}^T & A_{12}^T & A_{22} &  A_{34} & b_1  & b_2 & b_3\\[0.5em]
			(Y^3\vec{X}^{(3)})^T& A_{14}^T & A_{24}^T & A_{34}^T& A_{44} & a_1  & a_2 & a_3\\[0.5em]
			Y^2 X^{k-1} & \begin{array}{cc} d_1^T & \mathbf{t_2} \end{array} & \begin{array}{cc}c_1^T & \mathbf{t_1}\end{array} & b_1^T & a_1^T &\beta_{2k-2,4}(\mathbf{t_1})  & \beta_{2k-3,5} & \beta_{2k-2,5}(\mathbf{t_1}) \\[0.5em]
			Y^3X^{k-2}& \begin{array}{cc} d_2^T & \mathbf{t_1}\end{array} & c_2^T & b_2^T & a_2^T & \beta_{2k-3,5} & \beta_{2k-4,6} & \beta_{2k-3,6}\\[0.5em]
			Y^3X^{k-1}& \begin{array}{ccc} d_3^T & \mathbf{t_1} & \mathbf{t_3}\end{array} & \begin{array}{cc}c_3 ^T & \beta_{2k-2,4}(\mathbf{t_1})\end{array} & b_3^T & a_3^T & \beta_{2k-2,5}(\mathbf{t_1}) & \beta_{2k-3,6} & \beta_{2k-2,6}(\mathbf{t_1})\\[0.5em]
		\end{block}
		\end{blockarray},	
	\end{equation*}
	\end{tiny}
	where
	\begin{align*}
		a_1
		&=\left(\begin{array}{cccc} \beta_{k-1,5}& \cdots & \beta_{2k-4,5}\end{array}\right)^T,\qquad
		b_1
		=\left(\begin{array}{cccc} \beta_{k-1,4}& \cdots & \beta_{2k-3,4}\end{array}\right)^T,\\	
		c_1
		&=\left(\begin{array}{ccc} \beta_{k-1,3}& \cdots & \beta_{2k-3,3} \end{array}\right)^T,\qquad
		d_1
		=\left(\begin{array}{ccc} \beta_{k-1,2}& \cdots & \beta_{2k-2,2} \end{array}\right)^T,\\
		a_2
		&=\left(\begin{array}{cccc} \beta_{k-2,6}& \cdots & \beta_{2k-5,6}\end{array}\right)^T,\qquad
		b_2
		=\left(\begin{array}{cccc} \beta_{k-2,5}& \cdots & \beta_{2k-4,5}\end{array}\right)^T,	\\	
		c_2
		&=\left(\begin{array}{ccc} \beta_{k-2,4}& \cdots & \beta_{2k-3,4}\end{array}\right)^T,\qquad
		d_2
		=\left(\begin{array}{ccc} \beta_{k-2,3}& \cdots & \beta_{2k-3,3} \end{array}\right)^T,\\
		a_3
		&=\left(\begin{array}{cccc} \beta_{k-1,6}& \cdots & \beta_{2k-4,6}\end{array}\right)^T,\qquad
		b_3
		=\left(\begin{array}{cccc} \beta_{k-1,5}& \cdots & \beta_{2k-3,5}\end{array}\right)^T,	\\	
		c_3
		&=\left(\begin{array}{ccc} \beta_{k-1,4}& \cdots & \beta_{2k-3,4} \end{array}\right)^T,\qquad
		d_3
		=\left(\begin{array}{ccc} \beta_{k-1,3}& \cdots & \beta_{2k-3,3} \end{array}\right)^T
	\end{align*}
	are vectors. 
	By Theorem \ref{cor-0104-1556}, a $K_{4,\underline\alpha}$--representing measure for $\beta$ exists if there are $t_1,t_2,t_3\in \RR$ such that the matrix $S_{k,4}(t_1,t_2,t_3)$
	is positive definite.
\end{example}

Under the assumption $S_{k,n}\succ 0$, the sufficient condition $S_{k,n}(\vec{\mbf{t}}_{(k,n)})\succ 0$ from Theorem \ref{cor-0104-1556}
for the existence of a $K_{n,\underline{\alpha}}$--representing measure is not always necessary already for $n=3$ by the following example (for $n=2$ it is necessary due to a trivial reason 
$S_{k,2}(\vec{\mbf{t}}_{(k,2)})=S_{k,2}$).
For the \textit{Mathematica} file with the numerical computations see \url{https://github.com/ZalarA/TMP_parallel_lines}.

\begin{example}\label{counter-0305-0808}
	Let $k=n=3$.
	The intersection of the varieties
	\begin{align*}
		0&=(y-1)(y-2)(y-3)\qquad\text{and}\qquad
		0=y^2x^2+x(x+1)(x+2),
	\end{align*}
	are 9 real points, which can be checked using \textit{Mathematica}:
	\begin{align*}
		p_1&=(0,1),\; p_2=(0,2),\; p_3=(0,3),\; p_4\approx (-3.41,1),\; p_5\approx (-0.59,1),\\
		p_6&\approx (-11.83,3),\; p_7\approx (-0.17,3),\; p_8\approx (-6.70,2),\; p_{9}\approx (-0.298,2),
	\end{align*}
	We generate the moment matrix $M_3$ with the atoms $p_i$, $i=1,\ldots,9$, 
	all with densities $\frac{1}{9}$.\\

	\noindent \textbf{Claim 1.} The submatrix 
		$S_{3,3}={(PM_3P^T)|}_{\mathcal C^{(2)}}$ 
	cannot be extended with the row and column $Y^2X^2$ to a positive definite matrix.\\ 
	
	\noindent \textit{Proof of Claim 1.} 
	Since the atoms $p_i$, $i=1,\ldots,9$, lie on the union of lines $y=1$, $y=2$ and $y=3$, the moment matrix $M_3$ has a column
	relation $(Y-1)(Y-2)(Y-3)=\mbf 0$. The extension $S_{3,3}(\vec{\textbf{t}}_{3,3})$ defined by \eqref{251121-0824} 
	has only one unknown entry $\textbf{\textbf{t}}_{5,2}$ in the column $Y^2X^2$ and row $X^3$ which we denote by $\textbf{t}$
	for short. Note that $S_{3,3}(\vec{\textbf{t}}_{3,3})=S_{3,3}(\textbf{t})$ has the same form as $(\phi(M))(\textbf{t})$ for $k=3$
	from the proof of Theorem \ref{y3-2402}. 
	The restriciton ${(S_{3,3}(\textbf{t}))|}_{\mathcal C^{(2)}\cup\{Y^2X^2\}\setminus \{X^3\}}$ 
	is independent of $\mbf{t}$ and since the atoms in the measure lie on the variety $y^2x^2+x(x+1)(x+2)=0$, 
	it has the column relation 
		$Y^3X^2+YX(X+1)(X+2)=\mbf{0}.$
	By Lemma \ref{extension-principle} used for the psd extension $\cM=S_{3,3}(t)$ of $S_{3,3}$, $S_{3,3}(t)$ must also satisfy this column relation. 
	This proves the claim.\hfill $\blacksquare$\\

	\noindent \textbf{Claim 2.} $S_{3,3}$ is positive definite.\\ 
	
	\noindent \textit{Proof of Claim 2.} 
	The matrix $S_{3,3}$ is equal to $\sum_{i=1}^9 \frac{1}{9}v_i v_i^T$, where
		$$v_i=\left(\begin{array}{ccccccccc}
			1 & x_i & x_i^2 & x_i^3 & y_i & y_ix_i & y_ix_i^2 & y_i^2 & y_i^2 x_i 
		\end{array}\right)^T$$
	and $p_i=(x_i,y_i)$.
	The vectors $v_1,\ldots,v_9$ are linearly independent, which can by checked using \textit{Mathematica} 
	by computing the determinant  ($\approx 1.83\cdot 10^8$) of the $9\times 9$ matrix with columns $v_1,\ldots,v_9$
	and hence $S_{3,3}$ is positive definite.\hfill $\blacksquare$
\end{example}


\begin{thebibliography}{Arv666}

%
%
\bibitem
{Alb69}
	A. Albert, 
		\textit{Conditions for positive and nonnegative definiteness in terms of pseudoinverses}. 
			SIAM J. Appl. Math. \textbf{17} (1969), 434--440.

\bibitem
{BW11}
	M. Bakonyi, H.J. Woerdeman, 
		\textit{Matrix Completions, Moments, and Sums of Hermitian Squares}. 
			Princeton University Press, Princeton, 2011.


\bibitem
{BZ18}
	A. Bhardwaj, A. Zalar, \textit{The singular bivariate quartic tracial moment problem}. Complex Anal.\ Oper.\ Theory \textbf{12:4} (2018), 1057--1142.
\bibitem
{BZ21}
	A. Bhardwaj, A. Zalar, \textit{The tracial moment problem on quadratic varieties}. J. Math. Anal. Appl. \textbf{498} (2021), 
	\url{https://doi.org/10.1016/j.jmaa.2021.124936}.


\bibitem
{Ble15}
	G. Blekherman,
 		\textit{Positive Gorenstein ideals}. 
			Proc. Amer. Math. Soc. \textbf{143} (2015), 69--86.
%
\bibitem
{BF20}
	G. Blekherman, L. Fialkow, 
		\textit{The core variety and representing measures in the truncated moment problem}. 
			Journal of Operator Theory \textbf{84} (2020), 185--209.




%
%
\bibitem
{BK12}
	S. Burgdorf, I. Klep, \textit{The truncated tracial moment problem.}
		J.\ Oper.\ Theory \textbf{68} (2012), 141--163.



 \bibitem
{CIK16}
	E. Caglioti, M. Infusino, T. Kuna,  
		\textit{Translation invariant realizability problem on the $d$-dimensional
		lattice: an explicit construction}.
			Electron. Commun. Probab. \textbf{21} (2016), 1--9.
			\url{https://doi.org/10.1214/16-ECP4620}.



\bibitem
{CF91}
	R. Curto, L. Fialkow, 
		\textit{Recursiveness, positivity, and truncated moment problems}. 
			Houston J. Math. \textbf{17} (1991), 603--635.


\bibitem
{CF96}
	 R. Curto, L. Fialkow, \textit{Solution of the truncated complex moment problem
	 	for flat data}. Mem. Amer. Math. Soc. \textbf{119} (1996).

\bibitem
{CF98a}
	R. Curto, L. Fialkow, \textit{Flat extensions of positive moment matrices: 
		relations in analytic or conjugate terms}.
		Oper. Theory Adv. Appl. \textbf{104} (1998), 59--82. 
		
\bibitem
{CF98b}
	R. Curto, L. Fialkow, \textit{Flat extensions of positive moment matrices: 
		recursively generated relations}. Mem.\ Amer.\ Math.\ Soc.\ \textbf{136} (1998).

\bibitem
{CF02}
	R. Curto, L. Fialkow, \textit{Solution of the singular quartic moment problem}.
		J.\ Operator Theory \textbf{48} (2002), 315--354.

\bibitem
{CF04}
	R. Curto, L. Fialkow, \textit{Solution of the truncated parabolic moment problem}. 
		Integral Equations Operator Theory \textbf{50}  (2004), 169--196.
		
\bibitem
{CF05}
	R. Curto, L. Fialkow, \textit{Solution of the truncated hyperbolic moment problem}.
		Integral Equations Operator Theory \textbf{52} (2005), 181--218.  

\bibitem
{CF05b}
	R. Curto, L. Fialkow, \textit{Truncated $K$-moment problems in several variables}.
		J.\ Operator Theory \textbf{54} (2005), 189--226.
		


\bibitem
{CF13}
	R. Curto, L. Fialkow, \textit{Recursively determined representing measures for bivariate
		truncated moment sequences}. J. Operator theory \textbf{70} (2013), 401--436.

\bibitem
{CFM08}
	R. Curto, L. Fialkow, H. M. M\"oller, \textit{The extremal truncated moment problem}. 
		Integral Equations Operator Theory \textbf{60 (2)} (2008), 177-200. 

\bibitem
{CGIK+}
	R. Curto, M. Ghasemi, M. Infusino, S. Kuhlmann, \textit{The truncated moment problems for unital commutative $\RR$-algebras}, 
	arxiv preprint \url{https://arxiv.org/pdf/2009.05115.pdf}. 


\bibitem
{CY14}
	R. Curto, S. Yoo, \textit{Cubic column relations in the truncated moment problems}. J.\ Funct.\ Anal.\ \textbf{266 (3)} (2014), 1611--1626. 

\bibitem
{CY15}
	R. Curto, S. Yoo, \textit{Non-extremal sextic moment problems}. J.  Funct. Anal. \textbf{269 (3)} (2015), 758--780. 

\bibitem
{CY16}
	R. Curto, S. Yoo, \textit{Concrete solution to the nonsingular quartic binary moment problem}. 
		Proc. Amer. Math. Soc. \textbf{144} (2016), 249--258. 

\bibitem
{Dan92}
	J. Dancis, \textit{Positive semidefinite completions of partial hermitian matrices}.
		Linear Algebra Appl. \textbf{175} (1992), 97--114.


%
\bibitem
{DS18}
 	P. di Dio, K. Schm\"udgen, \textit{The multidimensional truncated Moment Problem: Atoms, Determinacy, and Core Variety}.
	J. Funct. Anal. \textbf{274} (2018), 3124--3148.
%



\bibitem
{Fia95}
	L. Fialkow, \textit{Positivity, extensions and the truncated complex moment problem}. Contemporary Math. \textbf{185} (1995), 133--150.


\bibitem
{Fia08}
	L. Fialkow, \textit{Truncated multivariable moment problems with finite variety}. J. of Operator Theory \textbf{60} (2008), 343--377.

\bibitem
{Fia11}
	L. Fialkow, \textit{Solution of the truncated moment problem with variety $y=x^3$}.
	Trans.  Amer. Math. Soc. \textbf{363} (2011), 3133--3165. 

\bibitem
{Fia15}
	L. Fialkow, \textit{The truncated moment problem on parallel lines}. 
	In: Theta Foundation International Book Series of Mathematical Texts \textbf{20} (2015), 99--116.

\bibitem
{Fia17}
	L. Fialkow, \textit{The core variety of a multisequence in the truncated moment problem}. 
	J. Math. Anal. Appl. \textbf{456} (2017), 946--969.

\bibitem
{FN10}
	L. Fialkow, J. Nie,  
		\textit{Positivity of Riesz functionals and solutions of quadratic and quartic
		moment problems}. J. Funct. An. \textbf{258} (2010), 328--356.
%



\bibitem
{IKLS17}
	M. Infusino, T. Kuna, J.L. Lebowitz, E.R. Speer, \textit{The truncated moment problem on $\NN_0$}. J. Math. Anal. Appl. \textbf{452} (2017), 443--468.

	
%
%
%

\bibitem
{Kim14}
	D.P. Kimsey, \textit{The cubic complex moment problem}. Integral Equations Operator Theory \textbf{80} (2014), 353--378.

%
\bibitem
{Kim21}
	D.P. Kimsey, \textit{On a minimal solution for the indefinite truncated multidimensional moment problem}. 
	J. Math. Anal. Appl. \textbf{500} (2021).
	\url{https://doi.org/10.1016/j.jmaa.2021.125091}.



%

%
%
 \bibitem
{KLS11}
	T. Kuna, J.L. Lebowitz, E.R. Speer,  
		\textit{Necessary and sufficient conditions for realizability of point processes}.
			Ann. Appl. Probab. \textbf{21} (2011), 1253--1281. 
		\url{https://doi.org/10.1214/10-AAP703}.

%
%
\bibitem
{Lau05}
	M. Laurent, 
		\textit{Revising two theorems of Curto and Fialkow on moment matrices}.
			Proc. Amer. Math. Soc. \textbf{133} (2005), 2965--2976. 
%



%

\bibitem
{Nie14}
	J. Nie, \textit{The $\mathcal{A}$-truncated $K$-moment problem}. Found. Comput. Math. \textbf{14} (2014), 1243--1276. 


%
%


%
%
%
%
%

%
%
%



\bibitem
{Smu59}
	J.L. Smul'jan, \textit{An operator Hellinger integral}, Mat.\ Sb.\ (N.S.) \textbf{49} (1959), 381--430.


%


\bibitem
{Vas03}
	F.H. Vasilescu, \textit{Spectral measures and moment problems}.
	In: Spectral theory and its applications (2003), 173--215.
 
\bibitem
{Wol}
	Wolfram Research, Inc., Mathematica, Version 12.0, Wolfram Research, Inc., Champaign,
		IL, 2020.

\bibitem
{Yoo17a}
	S.\ Yoo, \textit{Sextic moment problems on 3 parallel lines.} Bull. Korean Math. Soc. \textbf{54} (2017), 299--318.

\bibitem
{Yoo17b}
	S.\ Yoo, \textit{Sextic moment problems with a reducible cubic column relation.} Integral Equations Operator Theory \textbf{88} (2017), 45--63.


\bibitem
{Zal+}
	A.\ Zalar, \textit{The truncated Hamburger moment problem with gaps in the index set.} 
	Integral Equations Operator Theory \textbf{93} (2021). \url{https://doi.org/10.1007/s00020-021-02628-6}.


\bibitem
{Zal++}
	A.\ Zalar, \textit{The strong truncated Hamburger moment problem with and without gaps,} arxiv preprint \url{https://arxiv.org/abs/2101.00486}.
%
\bibitem
{Zha05}
F. Zhang, \textit{The Schur Complement and Its Applications}. 
	Springer-Verlag, New York, 2005.
%







\end{thebibliography}
\end{document}